%
%
%


\documentclass{amsproc}

\usepackage{amssymb}

\usepackage{graphicx}




\newtheorem{theorem}{Theorem}[section]
\newtheorem{lemma}[theorem]{Lemma}
\newtheorem{corollary}[theorem]{Corollary}

\theoremstyle{definition}

\theoremstyle{remark}

\numberwithin{equation}{section}

\newcommand{\ex}{e^{\frac{iqx_1}{\sqrt{1+b^2}}}}
\newcommand{\n}{\mathbf{n}}

\newcommand{\co}{\cos \theta}
\newcommand{\cg}{\overline{\psi}}
\newcommand{\bs}{\sqrt{1+b^2}}

\begin{document}

 \title[short text for running head]{Dynamic Analysis of Chevron Structures}
\title{Dynamic Analysis of Chevron Structures in Liquid Crystal Cells}


\author{Lidia Mrad}
\address{ Department of Mathematics, University of Arizona}
\curraddr{Tucson, AZ\ 85721}
\email{lmrad@math.arizona.edu}
\thanks{}

\author{Daniel Phillips }
\address{Department of Mathematics, Purdue University}
\curraddr {West Lafayette, IN\ 47907}
\email{phillips@purdue.edu}
\thanks{Research supported by NSF grant DMS-1412840}

\keywords{smectic-C, liquid crystals, chevron}

\date{}
\begin{abstract}

If a surface stabilized ferroelectric liquid crystal cell is cooled from the smectic-A to the smectic-C
phase its layers thin causing V-shaped (chevron like) defects to form. These  create an energy barrier that can prevent switching between equilibrium patterns.  We examine
a gradient flow for a mesoscopic Chen�-Lubensky energy $\mathcal{F}(\psi,\n)$
 that allows the order parameter  to vanish, so that the energy barrier does not diverge if the layer thickness
becomes small. The liquid crystal can evolve during switching in such a way that
the layers are allowed to melt and heal near the chevron tip
in the process.

\end{abstract}

\maketitle
\section{Introduction}
In surface stabilized ferroelectric liquid crystal [SSFLC] cells
smectic layers usually deform into a characteristic chevron
pattern [1, 2]. The chevron structure (see Fig. \ref{fig:Chevron}) is believed
to arise due to the mismatch between the natural smectic
layer thickness and the periodicity imposed by the layer pinning
at the surface in the smectic-A phase, where this surface
memory effect has been confirmed experimentally [3].
In the past,  several theoretical models have
been presented to describe the director and layer structure in
smectic-C chevron cells. This  work has
been motivated by a potential use of [SSFLC] cells in display devices. The original model was put
forward by Clark and co-workers [1, 4]. Theirs is a macroscopic description where the molecular alignment
varies slowly across each chevron arm, restricted so that the molecules lie on the arm's smectic-C cone. The chevron tip is idealized to have zero thickness; the smectic layers form a sharp bend at the tip so that  their normal is discontinuous
there. A key assumption in their model is the
continuity of the equilibrium director pattern across the cell. Assuming the cone angle is larger than the layer tilt, these conditions lead to two out-of-plane states determined by the intersection of the cones from each side of the tip. This means that chevron cells
exhibit two stable director states  between
which the cell can be switched by the application of an
external electric field. An important reason to model surface
stabilized cells is to describe the switching dynamics between
these stable director states. The model of Clark et al. has
been extended to include continuity of the biaxial ordering at
the chevron tip [5].

\begin{figure}[h]
  \centering
  \includegraphics[width=2.3in]{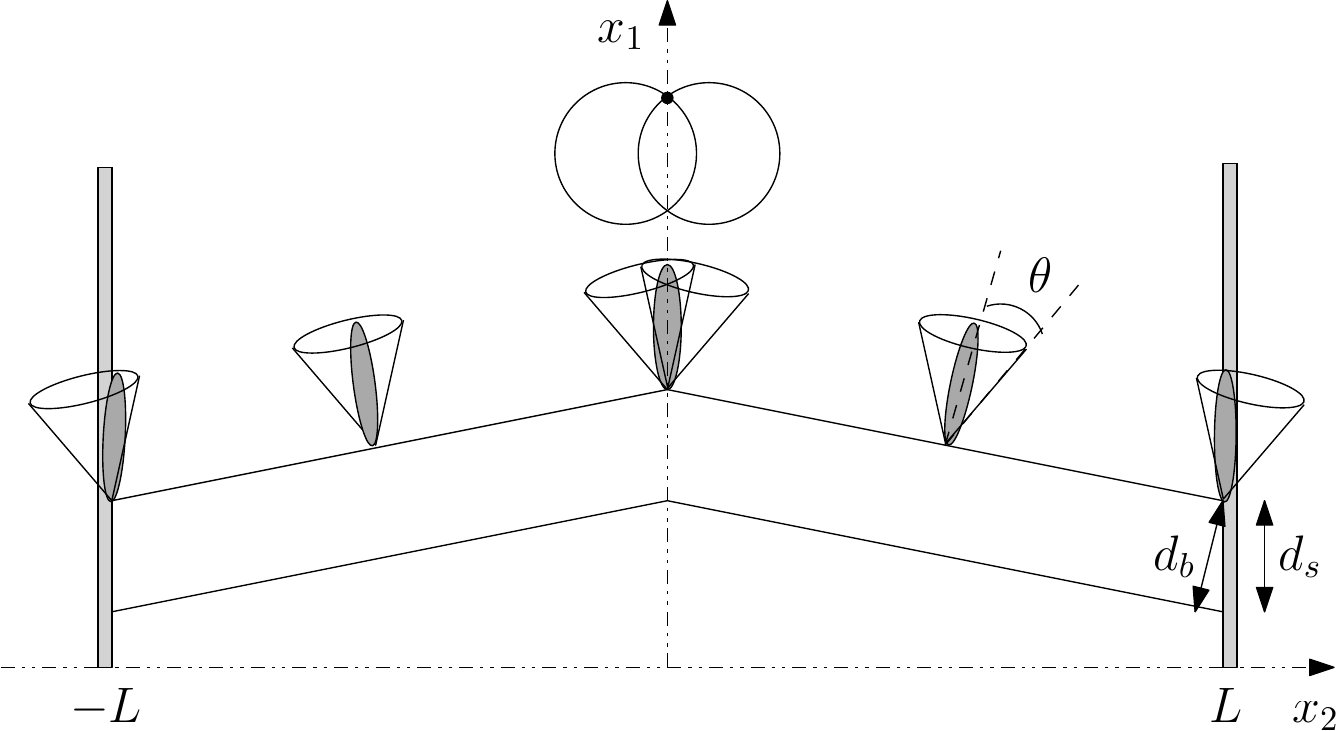}\hspace{0.3in}\includegraphics[width=2.3in]{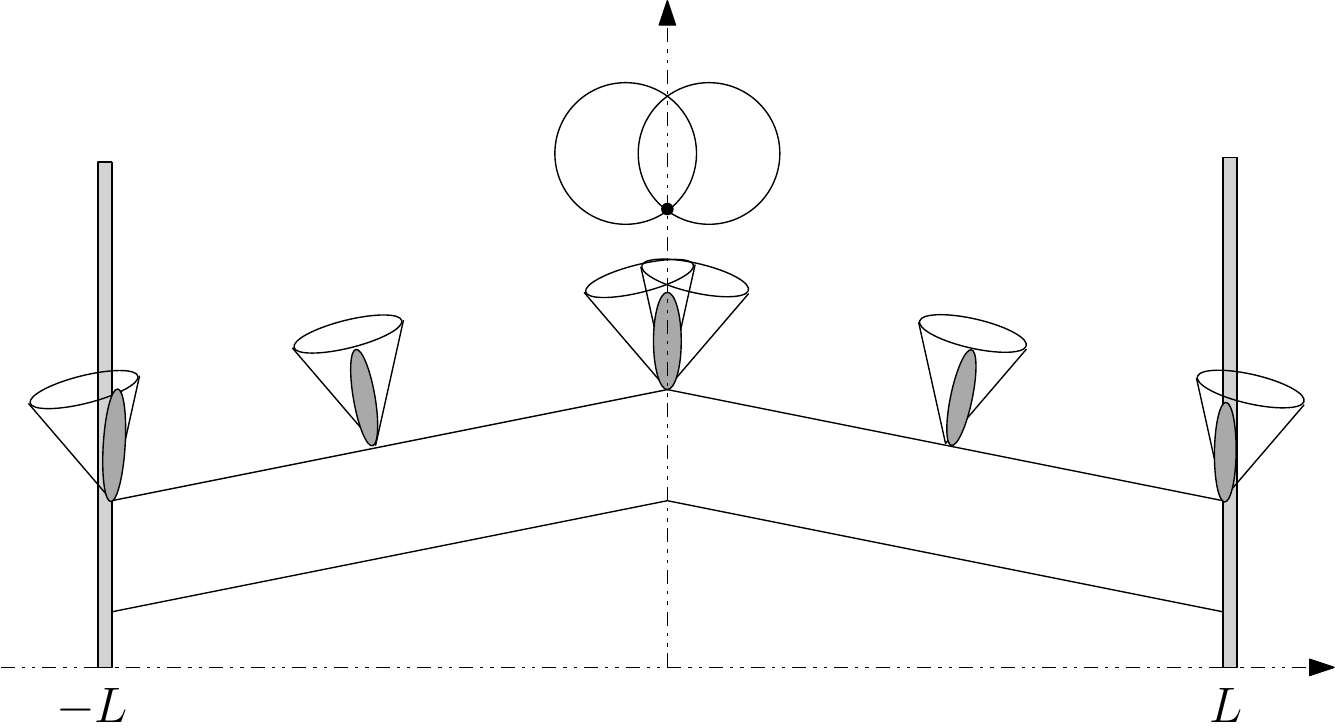}
  \caption{Switching in a Chevron Structure}
  \label{fig:Chevron}
\end{figure}

The development of models to describe ferroelectric
switching has a number of difficulties. These are largely due to the
contrasting length scales associated with the chevron
arms and tip. The macroscopic models such as the one described above require that the director at the chevron tip leaves the cones from the adjacent arms during the switching process. The models account for this by allowing the director to be discontinuous at the tip during the transition and an ad hoc energy barrier is included in the free energy  penalizing  the director's jump across the tip. (See Maclennan
et al. [6, 7], Ulrich and Elston [8], and Brown et al. [9]).

A number of models of the chevron structure have been
reported  which take into account continuous layer bending at the chevron tip and the director
rotation on the cone [10 - 15]. These models
however, enforce specific couplings between the molecular cone angle from the
smectic layer tilt angle that permit a continuous pattern near the tip but are also ad hoc.

In this paper we use a Chen-Lubensky model to characterize the director and layer
structure of surface stabilized smectic-C cells  in an external electric field. This approach has been  developed in the papers Kralj and Sluckin [16], Vaupoti\u{c} et al. [17, 18], Hazelwood and Sluckin [19], and Cheng and Phillips [20]. We build on these works here. The advantage of this phenomenological model over others is that the formation of chevrons and the nature of the director pattern near the chevron tip follow from energetic considerations as opposed to features that are directly inserted into the model. In [20], the smectic layer thickness is introduced as a small parameter and the static model of Clark et al. described above is captured as a singular limit having smectic layers with a sharp chevron tip and its two out-of-plane equilibrium director patterns. In this work we consider a more complete Chen-Lubensky model that allows the smectic layers to melt near the chevron tip under the application of a finite electric field allowing for the switching from one state, tending to the other assuming gradient flow dynamics. Our contribution here is that we establish existence and uniqueness results for the gradient flow problem. Many open questions for this model remain to be explored. For example once the switching occurs the model allows the smectic layers to reform near the chevron tip. Just how robust this regeneration is remains an open question. This is an important point since the studies of  Willis et al. [21] showed
 that there should be no significant change in smectic layer thickness or
chevron layer structure under typical director switching conditions.

\section{Model}

We consider a cross section of an SSFLC cell of width $2L$ and place the origin of the axes at the midpoint of the lower cell aperture. The domain  is then written as $\{(x_1,x_2); -L<x_2<L\}$.

To model a smectic phase, we need two types of ordering, orientational and positional. The former is described by a unit vector  $\n$ that indicates the average local orientation of the rod-shaped molecules, called the nematic director field. The latter is described by a complex-valued order parameter, $\Psi$, whose amplitude indicates the degree of smectic ordering and whose phase indicates the position of the smectic layer. If $|\Psi|=0$, then the phase is completely nematic. For a smectic phase, $|\Psi|>0$, with $|\Psi|\ge1$ for well-structured smectic layers.

An essential parameter in our analysis is the  wave number $q$, which equals $2\pi/d_b$, where $d_b$ is the layer thickness in the bulk smectic-C phase. In a realistic model, $d_b$ is very small (typically $L/d_b$ is in the range $300-400$ [17]) , so one eventually would like to study the limiting problem letting $q\to \infty$. For the present work, $q$ is assumed to be a large number. Let $d_s$ denote the layer thickness at the surface. This is a spacing  inherited from the material initially being cooled from the smectic-A phase. We use the parameter $b=\tan\left(\cos^{-1} \frac{d_b}{d_s}\right)$ to measure the mismatch between the different layer thicknesses.
Lastly, let $\theta$  denote the bulk tilt angle  of the molecules from the layer normal in the smectic-C phase.

We base our model on a covariant form of the Landau-de Gennes free energy, introduced by Chen \& Lubensky [22]. The density for our free energy  $\mathcal{F}(\n, \Psi)$ consists of three parts, nematic, smectic and electrostatic

$$ \mathcal{F}(\n, \Psi)=\int\{f_N(\n)+ \frac{1}{q}f_{CL}(\n,\Psi)+f_E(\n, \Psi)\}\,.$$

The nematic density, $f_N(\n)$, measures the uniformity of $\n$ and we take it to be the one-constant approximation of the Oseen-Frank energy density, $\frac{1}{2}K|\nabla{\mathbf{n}}|^2$ where $K>0$. The smectic part, $\frac{1}{q} f_{CL}(\n,\Psi)$, measures the uniformity of the layer structure and is a variation of the Chen-Lubensky energy density. We split this energy density into two components, $F(\n,\Psi)$ and $G(|\Psi|^2)$. The first component is the elastic energy density,
$$F(\n,\Psi)=\frac{a_{\perp}}{q^3}|D\cdot D_{\perp}\Psi|^2+\frac{a_{\|}}{q^3}|D\cdot D_{\|}\Psi|^2-\frac{c_{\perp}}{q}|D_{\perp}\Psi|^2+\frac{c_{\|}}{q}|D_{\|}\Psi|^2+\frac{c_{\perp}^2q}{4a_{\perp}}|\Psi|^2.$$
The second component we call the smectic penalization density,
$$G(|\Psi|^2)=g(|\Psi|^2-1)^2+|\nabla|\Psi|^2|^2+\frac{1}{q^2}|\nabla^2|\Psi|^2|^2+\frac{1}{q^6}|\nabla^3|\Psi|^2|^2.$$
$D$ is the covariant derivative, $D=\nabla-iq\co\n$ with its parallel component (to $\n$) $D_{\|}=(\n\cdot D)\n$ and perpendicular component (to $\n$) $D_{\perp}$. The  parameter constants $a_{\perp}, a_{\|}, c_{\|}, c_{\perp}, g$ are positive and determined by the material. Lastly, we write the electrostatic part, $f_E(\n, \Psi)=-\mathbf{P}\cdot \mathbf{E}$, where $\mathbf{P}=\frac{P}{q}\Im\{\overline{\Psi}\nabla\Psi\}\times\n$ is the spontaneous polarization field and $\mathbf{E}=(0,E,0)$ is the applied electric field directed across the cell.

Due to the periodicity of $\Psi$, we are able to reduce our model to be one-dimensional. Specifically, we write $\Psi(x_1,x_2)=\ex \psi(x_2)$ where $\psi$ is complex-valued. Expressing periodicity in this form was used by Kralj and Sluckin [16], wherein to enforce the smectic density wave in the $x_1$ direction, $\Psi$ was written in the form $\Psi=\eta e^{iq\frac{[x_1-g(x_2)]}{\sqrt{1+b^2}}}$ such that $\eta>0$ was assumed to be a constant and $x_1=g(x_2)$ is the graph of a uniformly smectic layer in their setting. The same idea was also used in [20]. Our approach, however, is different in that it aims to explain the whole process through the model without invoking any ad hoc energy terms added later. Particularly, we want the system to allow for phase changes in certain areas if that is less costly in an energetic sense (by melting, for instance). The tool to do that is to keep the complex-valued order parameter in a general form, allowing $\psi$ to vanish.

We consider the admissible set
\begin{align*}
X=&\left\{(\n,\psi) \in H^1((-L,L);\mathbb{S}^2)\times H^3((-L,L);\mathbb{C}); \right.&\\
&\qquad\left. \psi-\psi_0\in H^2_0((-L,L);\mathbb{C}) \text{ and }|\psi|^2{}''(-L)=|\psi|^2{}''(L)=0\right\}&
\end{align*}
where $\psi_0 \in H^3((-L,L);\mathbb{C})$ such that $|\psi_0(\pm L)|=1 \text{ and } \psi_0'\cg_0(\pm L)=\pm\frac{iqb}{\bs}.$ 
  The boundary conditions enforce uniform smectic layers at the cell surfaces and for simplicity there are no boundary conditions on the director. We rewrite the smectic energy by integrating by parts and taking into account the boundary conditions to get an  integrand that is bounded below. We then carry out the dimension reduction with our variable denoted by $x$ rather than $x_2$. Within the above admissible set, our total free energy ( per unit length with respect to $x_1$ ) becomes
\begin{flalign}
\mathcal{F}(\psi,\n)=&\int_{-L}^L \left\{\frac{a_{\perp}}{q}|\frac{\psi''}{q}-[(\frac{i}{\bs}n_1\psi+n_2\frac{\psi'}{q})n_2]'-\frac{q}{1+b^2}\psi +\frac{q}{1+b^2}n_1^2\psi\right.&\nonumber\\
&\qquad\qquad\quad-\frac{i}{\bs}n_1n_2\psi'+\frac{c_{\perp}q}{2a_{\perp}}\psi|^2&\nonumber\\
&\qquad+\frac{a_{\|}}{q}|(\frac{n_1}{\bs}-\co)(-\frac{q}{\bs}n_1 \psi +in_2\psi'+q\co \psi)&\nonumber\\
&\qquad\qquad\quad\label{Energy}+[(\frac{i}{\bs}n_1\psi+n_2\frac{\psi'}{{q}}-i\co \psi)n_2]'|^2&\\
&\qquad+qc_{\|}|\frac{i}{\bs}n_1 \psi+n_2 \frac{\psi'}{q}-i\co \psi|^2&\nonumber\\
&\qquad+g(|\psi|^2-1)^2+(|\psi|^2{}')^2+\frac{1}{q^2}(|\psi|^2{}'')^2+\frac{1}{q^6}(|\psi|^2{}''')^2+\frac{\rho}{q^6}|\psi'''|^2&\nonumber\\
&\qquad \left.+\frac{1}{2}K [n_1'^2+n_2'^2+n_3'^2]+\frac{PE}{\bs} |\psi|^2n_3\right\}\,dx&\nonumber
\end{flalign}
\remark We have added a regularizing factor $\frac{\rho}{q^6}|\psi'''|^2$ , with $\rho$ a small positive constant, to aid with the analysis. We will let $\rho\to 0$ later.
\section{Static Analysis}
To set up for the dynamic analysis, we highlight the quantities that are bounded uniformly (in $q$). We are able to find well-prepared initial data, specifically we can construct a family of possible initial configurations. One such example is $\n^0=(\co \sqrt{1+b^2}, 0, \sqrt{1-\cos^2\theta(1+b^2)})$ and $\psi^0=e^{-\frac{iq}{\bs}g(x_2)}$ where $g'(x_2)=-b\tanh (qx_2)/\tanh(qL).$
\begin{lemma}
For $q$ sufficiently large, there exists $(\n^0,\psi^0)\in X$ such that $\mathcal{F}(\n^0,\psi^0)\le C_0$, where $C_0$ is independent of $q$.
\label{InitialCond}
\end{lemma}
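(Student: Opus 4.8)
The plan is to use the explicit pair displayed just above the statement, $\n^0=(\co\bs,\,0,\,\sqrt{1-\cos^2\theta(1+b^2)})$ and $\psi^0=e^{-\frac{iq}{\bs}g}$ with $g'(x)=-b\tanh(qx)/\tanh(qL)$; any $g$ with the same boundary slopes $g'(\pm L)=\mp b$ and the same exponentially fast approach to a linear bulk profile gives another member of the asserted family. First I would verify $(\n^0,\psi^0)\in X$. Under the standing assumption that the cone angle exceeds the layer tilt one has $\cos^2\theta(1+b^2)\le 1$, so $\n^0$ is a genuine constant unit field and lies in $H^1((-L,L);\mathbb{S}^2)$; $g$ is smooth, so $\psi^0\in C^\infty\subset H^3$; since $|\psi^0|\equiv 1$, the function $|\psi^0|^2-1$ and all its derivatives vanish, so in particular $|\psi^0|^2{}''(\pm L)=0$; and since $(\psi^0)'\overline{\psi^0}=-\frac{iq}{\bs}g'$ with $g'(\pm L)=\mp b$, $\psi^0$ itself satisfies the constraints imposed on the boundary datum, so one may take $\psi_0:=\psi^0$, whence $\psi^0-\psi_0\equiv 0\in H^2_0$.

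Next I would record how much of $\mathcal F(\psi^0,\n^0)$ is annihilated by this choice. The nematic term $\tfrac12 K|\nabla\n^0|^2$ vanishes because $\n^0$ is constant; the entire smectic penalization line vanishes because $|\psi^0|^2\equiv1$; the electrostatic term is the constant $\tfrac{PE}{\bs}\sqrt{1-\cos^2\theta(1+b^2)}$, whose integral over $(-L,L)$ is a fixed number. Because $n_2=0$, every bracket of the form $[(\cdots)n_2]'$ and every term carrying a bare factor $n_2$ is identically zero, and because $\tfrac{n_1}{\bs}=\co$ what is left of the $a_\|$ and $c_\|$ terms carries a factor $\tfrac{n_1}{\bs}-\co=0$ and so vanishes as well. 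For the regularizing term, writing $\alpha=\tfrac{q}{\bs}$ one computes $(\psi^0)'''=\psi^0(-i\alpha g'''-3\alpha^2 g'g''+i\alpha^3(g')^3)$; the piece $\alpha^3(g')^3$ is $O(q^3)$ throughout $(-L,L)$ and contributes $O(\rho)$ to $\tfrac{\rho}{q^6}\int|(\psi^0)'''|^2$, while every term containing $g''$ or $g'''$ is, up to exponentially small errors, supported in an $O(1/q)$ neighborhood of the origin where $g''=O(q)$ and $g'''=O(q^2)$ and hence contributes only $O(1/q)$. Thus only the $a_\perp$ term remains to be controlled.

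The $a_\perp$ term is the crux. After using $n_2=0$ and factoring out $|\psi^0|=1$, its integrand equals $\tfrac{a_\perp}{q}$ times
\[ \Big|\,-\tfrac{i\,g''}{\bs}-\tfrac{q}{1+b^2}\big((g')^2+1\big)+q\,\co^2+\tfrac{c_\perp q}{2a_\perp}\,\Big|^2. \]
Here the material constants and the bulk tilt are related (this bulk profile being a Chen--Lubensky equilibrium away from the tip) by $\tfrac{c_\perp}{2a_\perp}=\sin^2\theta$, which is precisely what cancels the $O(q)$ pieces of the real part in the bulk: the real part collapses to $\tfrac{q}{1+b^2}\big(b^2-(g')^2\big)=\tfrac{qb^2}{(1+b^2)\tanh^2(qL)}\big(\operatorname{sech}^2(qx)-\operatorname{sech}^2(qL)\big)$, while the imaginary part is $-\tfrac{g''}{\bs}=\tfrac{qb\,\operatorname{sech}^2(qx)}{\bs\tanh(qL)}$. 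Both are $O(q)$ but supported, up to exponentially small errors, in a layer of width $O(1/q)$ about the chevron tip; squaring, multiplying by $\tfrac{a_\perp}{q}$, and using $\int_{-L}^{L}\operatorname{sech}^4(qx)\,dx=\tfrac1q\int_{-qL}^{qL}\operatorname{sech}^4(u)\,du\le\tfrac{4}{3q}$ turns the apparent $O(q)$ into $O(1)$. Adding the fixed contributions gives $\mathcal F(\psi^0,\n^0)\le C_0$ with $C_0$ independent of $q$.

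I expect this last step to be the main obstacle: one must keep track of every $O(q)$ term inside the $a_\perp$ modulus and see that they cancel --- which works only because of the precise form of $\n^0$ (forcing $\tfrac{n_1}{\bs}=\co$ and $n_2=0$) together with the model relation between $\theta$ and $c_\perp/a_\perp$ --- and then recognize that the genuinely $O(q)$ leftover is confined to an $O(1/q)$-thick layer at the chevron tip, so that after the $q^{-1}$ prefactor and integration it stays $O(1)$. Everything else is either identically zero or a bounded constant.
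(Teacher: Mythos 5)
Your proposal is correct and uses exactly the trial pair the paper exhibits (the paper states the example but omits the verification, which you supply): the key observations — that $n_2=0$ and $n_1/\sqrt{1+b^2}=\cos\theta$ kill the $a_\|$ and $c_\|$ terms, that the bulk relation $\sin^2\theta=c_\perp/(2a_\perp)$ implicit in calling $\theta$ the bulk tilt angle cancels the $O(q)$ real part of the $a_\perp$ bracket, and that the leftover is concentrated in an $O(1/q)$ layer so that $\int\operatorname{sech}^4(qx)\,dx=O(1/q)$ absorbs the remaining powers of $q$ — are all accurate. The boundary checks ($\psi^{0\prime}\overline{\psi^0}(\pm L)=\pm iqb/\sqrt{1+b^2}$, $|\psi^0|\equiv 1$) and the $O(\rho)+O(1/q)$ bound on the regularizing term are also right.
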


Our dynamic analysis is based on energy minimization, so we only consider the states $(\n,\psi)\in X$ with $\mathcal{F}(\n,\psi)\le \mathcal{F}(\n^0,\psi^0)$. Through out this paper it is assumed that the constants appearing in (\ref{Energy}), with the exceptions of $0\leq\rho<1$ and $q\geq1$ are fixed. We use $C_1$ to denote a constant in our estimates  such that $C_1(\mathcal{C})$ is independent of $\rho,$  states $(\n,\psi)$  for which  $\mathcal{F}(\n,\psi)\le \mathcal{C},$  and  all $q$  sufficiently large $q\geq q_0(\mathcal{C}).$ Since the energy for our initial data is uniformly bounded, we can deduce using Sobolev's embedding theorem in 1-dim that $|\psi|$ and $\frac{|\psi|^2{}''}{q^2}$ at later times are uniformly bounded  as well. We also prove a Modica-Mortola type estimate.
\begin{lemma}
For $q$ sufficiently large, $\dfrac{|\psi'|}{q}\le C_1$ on $[-L,L]$, where $C_1$ is independent of $q$.
\label{Bdpsi'}
\end{lemma}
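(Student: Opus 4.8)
The estimate is ``of Modica--Mortola type'' in that its engine is the pointwise identity $|\psi|^2{}''=2|\psi'|^2+2\Re(\cg\psi'')$, which converts the already available $L^\infty$ bounds $|\psi|\le C_1$ and $|\psi|^2{}''/q^2\le C_1$ into a bound on $|\psi'|^2/q^2$ once $\Re(\cg\psi'')/q^2$ is controlled. I would first obtain the $L^2$ bound $\int_{-L}^L|\psi'|^2/q^2\,dx\le C_1$: integrating the identity, $\int|\psi|^2{}''=[|\psi|^2{}']_{-L}^L=0$ because the admissible data satisfy $\psi_0'\cg_0(\pm L)=\pm iqb/\bs$, so that $|\psi|^2{}'(\pm L)=2\Re(\cg_0\psi_0')(\pm L)=0$; hence $\int|\psi'|^2/q^2=-\int\Re(\cg\psi'')/q^2$. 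The $a_\perp$ and $a_\|$ terms of \eqref{Energy} are perfect squares $\frac{a_\perp}{q}|S|^2$ and $\frac{a_\|}{q}|P|^2$, so $\mathcal F(\psi,\n)\le C_0$ gives $\|S\|_{L^2},\|P\|_{L^2}\le C_1q^{1/2}$, while the $c_\|$ term gives $\int n_2^2|\psi'|^2/q^2\le C_1$; using these with $|\psi|\le C_1$ and $\|\n'\|_{L^2}\le C_1$ to replace $\psi''$ and integrate by parts (the boundary terms again vanish by the conditions on $\psi_0$) should yield $\|\psi'/q\|_{L^2}^2\le C_1+C_1\|\psi'/q\|_{L^2}$, hence $\|\psi'/q\|_{L^2}\le C_1$.

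For the pointwise bound I would split $\psi''/q^2$ through the $a_\perp$ and $a_\|$ terms of \eqref{Energy}: with $W=\frac{i}{\bs}n_1\psi+n_2\frac{\psi'}{q}$ and $Z=W-i\co\psi$, the $a_\perp$ term controls $(n_1^2+n_3^2)\psi''/q$ and the $a_\|$ term controls $n_2^2\psi''/q$, each modulo terms bounded by $|\psi|$, $|\psi'|/q$, $|\n'|$ and a remainder ($S$, respectively $P$). Substituting the resulting expression for $\psi''/q^2$ into $|\psi'|^2/q^2=\frac12|\psi|^2{}''/q^2-\Re(\cg\,\psi''/q^2)$, a cascade of cancellations occurs --- every $\Re(i|\psi|^2)$ term drops, the terms in $|\psi|^2{}'$ cancel in pairs, and the pieces containing $Z$ collapse since $\Re(\cg Z)$ and $\Re(i\cg Z)$ are expressible through $|\psi|^2$ and $\Im(\cg\psi')$ --- leaving
$$\frac{|\psi'|^2}{q^2}=\frac12\frac{|\psi|^2{}''}{q^2}-A|\psi|^2+\frac{2\co n_2}{q}\Im(\cg\psi')-\frac1q\Re(\cg S)-\frac1q\Re(\cg P),$$
with $A$ a bounded real function of $\n,\theta$. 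The first two terms are $\le C_1$, the term $\frac{2\co n_2}{q}\Im(\cg\psi')$ is $\le C_1|\psi'|/q$ and is absorbed by Young's inequality, and so
$$\frac{|\psi'|^2}{q^2}\le C_1+\frac{C_1}{q}\bigl(|S|+|P|\bigr)$$
pointwise. To close the argument one then shows $\frac1q(|S|+|P|)\le C_1$ by a further Modica--Mortola AM--GM, pairing the $1/q$-weighted fourth-order densities against the $q^2$-order densities --- the model being the bound $\|n_2Z\|_{L^\infty}\le C_1$, obtained by pairing $\frac{a_\|}{q}\bigl|iq(\frac{n_1}{\bs}-\co)Z+(n_2Z)'\bigr|^2$ with $qc_\||Z|^2$ to see that $|n_2Z|^2$ has bounded total variation.

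The crux is that last step. The fourth-order terms carry a $1/q$ weight, so from the energy bound alone $S$ and $P$ are only $O(q^{1/2})$ in $L^2$ (equivalently $\|\psi''/q\|_{L^2}$ is genuinely of order $q$: the density wave oscillates at frequency $q$, so $\psi''\approx-\frac{q^2}{1+b^2}\psi$). These cannot be upgraded to $q$-uniform $L^\infty$ bounds by a naive Agmon inequality, since that needs $\|\psi'''\|_{L^2}$, which is controlled only by the regularizing term $\frac{\rho}{q^6}|\psi'''|^2$ and therefore not by a $\rho$-independent constant; one must instead use that $\psi''$ enters only through the benign combination above and that $S$ and $P$ are, up to their $O(q\psi)$ and $O(\psi')$ parts, derivatives of quantities that are small in $L^2$, again balanced against the $q^2$-order part of the energy in the Modica--Mortola fashion already used for $n_2 Z$.
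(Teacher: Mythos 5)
Your reduction to the identity $\Re\{\psi''\cg\}=\tfrac12|\psi|^2{}''-|\psi'|^2$ and your preliminary $L^2$ estimate for $\psi'/q$ are sound and match the starting point of the paper's argument. The gap is exactly where you flag it: the passage from $L^2$ to $L^\infty$ via a pointwise identity ending in $-\frac1q\Re(\cg S)-\frac1q\Re(\cg P)$. The energy only gives $\frac{a_\perp}{q}\|S\|_{L^2}^2\le C_1$, i.e. $\|S/q\|_{L^2}=O(q^{-1/2})$ with no pointwise information, and a bound $\frac1q|S|\le C_1$ would already imply $|\psi''|/q^2\le C_1$ --- a statement strictly stronger than the lemma you are proving, so it cannot serve as an input. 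The ``further Modica--Mortola AM--GM'' you invoke to close this is asserted rather than carried out, and the one concrete instance you give ($\|n_2Z\|_\infty\le C_1$) only controls $n_2^2\psi'/q$, which degenerates where $n_2$ vanishes. Any route needing $S$ or $P$ pointwise is blocked, since their derivatives involve $\psi'''$, which is controlled only through the $\rho$-regularization.

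The paper's proof avoids this by never leaving the weighted-$L^2$ level until the very last step. From the energy one extracts that $q^{1/2}\bigl(\Re\{\psi''\cg/q^2\}+\tfrac{b^2}{1+b^2}|\psi|^2\bigr)/|\psi|$ is bounded in $L^2$ on each component $(a_j,b_j)$ of $\{|\psi|>0\}$; by your identity this is weighted-$L^2$ smallness of $\frac{|\psi'|^2}{q^2}-2M$ on the set $E$ where $\frac{|\psi'|^2}{q^2}\ge 2M$. Pairing this via AM--GM (the weights $q$ and $1/q$ cancel) against the $L^2$ bound on $q^{-1/2}|\psi|\bigl(\Re\{\psi'\cg/(q|\psi|)\}\bigr)'$, which follows from $\frac{1}{q^2}\int(|\psi|^2{}'')^2\le C_1$, gives an $L^1$ bound on $\bigl(\Phi(|\psi|'/q)\bigr)'$ with $\Phi'(y)=[y^2-2M]^+$, i.e. a bound on the oscillation of $\Phi(|\psi|'/q)$ over $(a_j,b_j)$. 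Since $|\psi|'$ vanishes somewhere on each such interval (at an interior zero of $|\psi|$ or at the boundary, by the boundary conditions), this yields the $L^\infty$ bound on $|\psi|'/q$, and the imaginary part $\Im\{\psi'\cg/(q|\psi|)\}$ is handled identically. That conversion of weighted $L^2$ control of $[(|\psi'|/q)^2-2M]^+$ into an oscillation bound for an antiderivative, anchored at a zero of $|\psi|'$, is the step your proposal is missing; seeking pointwise control of the energy residuals $S$, $P$ instead cannot work.
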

This specific boundedness, in fact, has an important physical implication for our system: the coupling between $\n$ and $|\psi|$ weakens for a sufficiently large $q$.
\begin{proof} Note $\psi \in C^1[-L,L]$ and we seek a specific bound in terms of $q$. We write $\{x: |\psi(x)|>0, -L<x<L\}$ as a countable  union of disjoint intervals $\{(a_j,b_j) \text{ for }j\in\mathcal{I}\}.$ Since $\frac{|\psi'|}{q}=0$ on $ (-L,L)\text{\textbackslash}\overline{\underset{j\in\mathcal{I}}\cup (a_j,b_j)}$, it is enough to prove $\frac{|\psi'|}{q}\le C_1$ on each interval $(a_j,b_j)$ for a constant $C_1$ independent of $j\in\mathcal{I}.$ The plan is to prove the real and imaginary parts of $\frac{\psi'\cg}{q|\psi|}$ are bounded.


We first note that since  $\mathcal{F}(\n,\psi)\le \mathcal{F}(\n^0,\psi^0)$ we have
$$\int_{-L}^L((|\psi|^4+(|\psi|^2{}')^2+\frac{1}{q^2}(|\psi|^2{}'')^2+\frac{1}{q^6}(|\psi|^2{}''')^2)\,dx\leq C_1.$$
It follows that $|\frac{|\psi|^2{}''}{q^2}|+|\psi|^2\leq M_1$ where $C_1$ and $M_1$ are independent of $q$ for $q\geq 1$.
Fix $j\in \mathcal{I}.$ Using the fact that the initial energy is bounded, and after carrying out some algebraic manipulations, we get
\begin{flalign}
\int_{a_j}^{b_j}\frac{1}{q}\left|\Im\{\frac{\psi''\cg}{q}\}\right|^2|\psi|^{-2}+q\left|\Re\{\frac{\psi''\cg}{q^2}\}+ \frac{b^2}{1+b^2}|\psi|^2\right|^2|\psi|^{-2}\,dx\le C_1&
\label{MME}
\end{flalign}
\begin{flalign*}
q\int_{a_j}^{b_j}\left|\Re\{\frac{\psi''\cg}{q^2}\}+ \frac{b^2}{1+b^2}|\psi|^2\right|^2|\psi|^{-2}\,dx&=q\int_{a_j}^{b_j}\left|\left(\Re\{\frac{\psi'\cg}{q^2}\}\right)'-\frac{|\psi'|^2}{q^2}+ \frac{b^2}{1+b^2}|\psi|^2\right|^2|\psi|^{-2}\,dx&\\
&=q\int_{a_j}^{b_j}\left|\frac{|\psi|^2{}''}{2q^2}-\frac{|\psi'|^2}{q^2}+ \frac{b^2}{1+b^2}|\psi|^2\right|^2|\psi|^{-2}\,dx&
\end{flalign*}
Consider the set $E=\left\{x\in (a_j, b_j); \frac{|\psi'|^2}{q^2}\ge 2M\right\}$ where $M$ is such that $\frac{|\psi|^2{}''}{2q^2}+ \frac{b^2}{1+b^2}|\psi|^2\le M$. On $E$, we have
\begin{equation}
\frac{q}{4}\int_E\left[\frac{|\psi'\cg|^2}{q^2|\psi|^2}\right]^2|\psi|^{-2}\,dx=\frac{q}{4}\int_E\left[\frac{|\psi'|^2}{q^2}\right]^2|\psi|^{-2}\,dx\le C_1.
\label{IneqT}
\end{equation}
and
\begin{equation}
q\displaystyle\int_E\left[\frac{|\psi'|^2}{q^2}-2M\right]^2|\psi|^{-2}\,dx\le C_1.
\label{IneqM}
\end{equation}
We now utilize the equality
\begin{align*}
\left(\Re\{\frac{\psi'\cg}{q|\psi|}\}\right)'&=\left(\frac{-\Re\{\frac{\psi'\cg}{|\psi|}\}}{|\psi|^2}\right)\left(\Re\{\frac{\psi'\cg}{q}\}\right)+\frac{1}{|\psi|}\left(\Re\{\frac{\psi'\cg}{q}\}\right)'.&
\end{align*}
Knowing that
\begin{align*}
\frac{1}{q}\int_{a_j}^{b_j}\left[\left(\Re\{\frac{\psi'\cg}{q}\}\right)'\right]^2\,dx=\frac{1}{q}\int_{a_j}^{b_j}\left(\frac{|\psi|^2{}''}{2q}\right)^2\,dx\,dx\le C_1
\end{align*}
and that
\begin{align*}
\frac{1}{q}\int_E\left|\Re\{\frac{\psi'\cg}{|\psi|}\}\Re\{\frac{\psi'\cg}{q|\psi|}\}\right|^2\,dx\le qM_1\int_E\left[\frac{|\psi'\cg|^2}{q^2|\psi|^2}\right]^2|\psi|^{-2}\,dx\le C_1\text{ by (\ref{IneqT})},
\end{align*}
we conclude that
\begin{flalign}
\frac{1}{q}\int_E\left||\psi|\left(\Re\{\frac{\psi'\cg}{q|\psi|}\}\right)'\right|^2\le C_1.&
\label{IneqR}
\end{flalign}
Adding Inequalities (\ref{IneqM}) and (\ref{IneqR}),
\begin{flalign*}
C_1&\ge \frac{1}{q}\int_E\left||\psi|\left(\Re\{\frac{\psi'\cg}{q|\psi|}\}\right)'\right|^2\,dx+ q\displaystyle\int_E\left[\frac{|\psi'|^2}{q^2}-2M\right]^2|\psi|^{-2}\,dx&\\
&\ge \int_{a_j}^{b_j}\left|\left(\Re\{\frac{\psi'\cg}{q|\psi|}\}\right)'\left[\frac{|\psi'|^2}{q^2}-2M\right]^+\right|\,dx\ge \int_{a_j}^{b_j}\left|\left(\Re\{\frac{\psi'\cg}{q|\psi|}\}\right)'\left[\left(\Re\{\frac{\psi'\cg}{q|\psi|}\}\right)^2-2M\right]^+\right|\,dx,&
\end{flalign*}
noting that the second inequality above is a Modica-Mortola type estimate.\\
If $\Phi$ is such that $\Phi'(y)=[y^2-2M]^+$, then $\underset{(a_j,b_j)}{osc}\:\:\Phi\left(\frac{|\psi|'}{q}\right)=\underset{(a_j,b_j)}{osc}\:\:\Phi\left(\Re\{\frac{\psi'\cg}{q|\psi|}\}\right)\le C_1$. If $-L<a_j<b_j<L$ then $|\psi(a_j)|=|\psi(b_j)|=0$ and it follows that $|\psi(x)|'=0$ for some $x\in(a_j,b_j)$. If either $a_j=-L$ or $b_j=L$ then it follows from the boundary conditions that $|\psi|'=0$ at that point. In either case
 it follows that  $|\frac{|\psi|'}{q}|$ is uniformly bounded independent of $q$ and $j$ on $(a_j,b_j)$. A similar reasoning can be applied to the imaginary part to get the conclusion.
\end{proof}
One last static result we present is the existence of minimizers.
\begin{theorem}
For $q$ sufficiently large, there exists $(\mathbf{m}, \xi)\in X$ such that
$$\mathcal{F}(\mathbf{m}, \xi)=\inf_{(\n,\psi)\in X}\mathcal{F}(\n,\psi).$$
\label{MinExistence}
\end{theorem}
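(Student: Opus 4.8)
The plan is the direct method of the calculus of variations; throughout, $q$ (large) and $\rho\in(0,1)$ are fixed. First I would check that the infimum is finite: Lemma~\ref{InitialCond} gives $\inf_X\mathcal{F}\le\mathcal{F}(\n^0,\psi^0)\le C_0<\infty$, and for a lower bound I note that in the integrand of (\ref{Energy}) every term is nonnegative except $\frac{PE}{\bs}|\psi|^2n_3$, so, since $|n_3|\le1$, completing the square against $g(|\psi|^2-1)^2$ yields $g(|\psi|^2-1)^2+\frac{PE}{\bs}|\psi|^2n_3\ge-c_0$ pointwise and hence $\mathcal{F}\ge-2Lc_0$ on $X$; thus $m:=\inf_X\mathcal{F}\in\mathbb{R}$.

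Next I would take a minimizing sequence $(\n_k,\psi_k)\in X$ with $\mathcal{F}(\n_k,\psi_k)\to m$; after deleting finitely many terms one may assume $\mathcal{F}(\n_k,\psi_k)\le\mathcal{F}(\n^0,\psi^0)\le C_0$, so the static estimates established above apply with constants uniform in $k$. The regularizing term gives $\|\psi_k'''\|_{L^2}\le C$; Lemma~\ref{Bdpsi'} together with the uniform bound on $|\psi_k|$ gives $\|\psi_k\|_{L^\infty}+\|\psi_k'\|_{L^\infty}\le C$ (the penalization $(|\psi|^2{}')^2$ alone controls only $\big||\psi|'\big|$, so Lemma~\ref{Bdpsi'} is genuinely needed here); interpolating, $\|\psi_k\|_{H^3}\le C$. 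Since $|\n_k|\equiv1$ and the Oseen--Frank term controls $\|\n_k'\|_{L^2}$, also $\|\n_k\|_{H^1}\le C$. Passing to a subsequence (not relabeled), $\psi_k\rightharpoonup\xi$ in $H^3$ and $\n_k\rightharpoonup\mathbf{m}$ in $H^1$, and by the compact Sobolev embeddings in one dimension $\psi_k\to\xi$ in $C^2([-L,L])$, $\n_k\to\mathbf{m}$ in $C^0([-L,L])$, $|\psi_k|^2\to|\xi|^2$ in $C^2$, and $(|\psi_k|^2)'''\rightharpoonup(|\xi|^2)'''$ in $L^2$. I would then verify $(\mathbf{m},\xi)\in X$: $\xi\in H^3$, $\mathbf{m}\in H^1$ with $|\mathbf{m}|\equiv1$ by uniform convergence; $\psi_k-\psi_0\in H^2_0$ and the $C^2$-convergence force $\xi-\psi_0$ and $(\xi-\psi_0)'$ to vanish at $\pm L$, so $\xi-\psi_0\in H^2_0$; and $|\xi|^2{}''(\pm L)=\lim_k|\psi_k|^2{}''(\pm L)=0$.

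For lower semicontinuity I would split $\mathcal{F}=\mathcal{F}_s+\mathcal{F}_w$. Into $\mathcal{F}_s$ go the $qc_\|$ term, $g(|\psi|^2-1)^2$, $(|\psi|^2{}')^2$, $\frac1{q^2}(|\psi|^2{}'')^2$ and $\frac{PE}{\bs}|\psi|^2n_3$; each integrand is a continuous function of $\n,\psi,\psi',\psi''$, all of which converge uniformly, so $\mathcal{F}_s(\n_k,\psi_k)\to\mathcal{F}_s(\mathbf{m},\xi)$. Into $\mathcal{F}_w$ go the $a_\perp$ and $a_\|$ terms, $\frac1{q^6}(|\psi|^2{}''')^2$, $\frac{\rho}{q^6}|\psi'''|^2$ and $\frac K2|\n'|^2$; each is $\int|L_k|^2$ for a fixed expression $L_k$. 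Expanding the products, and in the $a_\perp,a_\|$ brackets carrying out the differentiation $[(\cdots)n_2]'$ by the product rule, each $L_k$ is a finite sum of terms that are either strongly convergent in $L^2$ or of the form (a polynomial in $\n_k,\psi_k,\psi_k',\psi_k''$, converging strongly in $L^\infty$) $\times$ (one of $\psi_k'''$, $\n_k'$, $(|\psi_k|^2)'''$, converging weakly in $L^2$); hence $L_k\rightharpoonup L$ in $L^2$ and $\int|L|^2\le\liminf\int|L_k|^2$ by weak lower semicontinuity of the $L^2$-norm. Summing, $\mathcal{F}(\mathbf{m},\xi)\le\liminf_k\mathcal{F}(\n_k,\psi_k)=m$, and since $(\mathbf{m},\xi)\in X$ this is an equality.

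The main obstacle is this last step for the highest-order contributions: the expressions inside the $a_\perp$ and $a_\|$ brackets contain $\n_k'$, which converges only weakly, so one cannot pass to the limit termwise, and the argument goes through only because every factor multiplying $\n_k'$ (and $\psi_k'''$) converges strongly in $L^\infty$, turning strong $\times$ weak into weak $L^2$ convergence so that the norm-square is lower semicontinuous. A secondary subtlety is that the uniform $H^3$ bound on $\psi_k$ relies on the regularizing term $\frac{\rho}{q^6}|\psi'''|^2$ and on Lemma~\ref{Bdpsi'}, and degenerates as $\rho\to0$ or $q\to\infty$ — consistent with the theorem being stated for $q$ fixed.
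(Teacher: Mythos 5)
Your proposal is correct and follows essentially the same route as the paper's proof: the direct method, with weak compactness in $H^1\times H^3$, uniform convergence of $\n_k$ and the lower-order derivatives of $\psi_k$ via Sobolev embedding and Arzel\`a--Ascoli, and weak lower semicontinuity of the $L^2$-norm applied to the quadratic integrands once each is written as the square of a (strong $\times$ weak) convergent expression. Your write-up is in fact more explicit than the paper's about where the uniform $H^3$ bound comes from (the $\rho$-regularization term) and about the strong-times-weak structure needed to pass to the limit in the $a_\perp$, $a_\|$ brackets.
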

\begin{proof}
By coercivity and initial boundedness of the energy (\ref{Energy}), we can guarantee the existence of a subsequence $(\n_j,\psi_j)$ that converges weakly to $(\mathbf{m}, \xi)$ in $H^1\times H^3$. Weak convergence, however, is not enough due to the nonlinearity of the terms. We invoke a Sobolev embedding theorem to deduce that $\{\n_j\}$ and $\{\psi'_j\}$ are uniformly bounded in $C^{0,1/2}(-L,L)$, followed by Arzel\`{a}-Ascoli theorem to get uniform convergence to $\mathbf{m}$ and $\xi'$ respectively.
By convergence proved thus far and lower semicontinuity of the $L^2$-norm,
\begin{equation*}
\mathcal{F}(\mathbf{m}, \xi)\leq \liminf \mathcal{F}(\n_j, \psi_j),
\end{equation*}
proving that $\mathcal{F}(\mathbf{m}, \xi)$ is a minimizer.
\end{proof}
\section{Dynamic Analysis}
\subsection{Method of Rothe}
We begin the dynamic analysis of a chevron structure under an applied electric field by constructing a discretized-in-time gradient flow. We follow the Method of Rothe [23], through which we construct an approximate elliptic-type problem. Convergence of the approximate solution to the continuous solution is the main goal of our analysis. An advantage of this approach is that it not only exploits the variational feature of the problem, but also accommodates for its nonlinearity, as will be seen later.

 Let $(\n^0,\psi^0)\in X$ be any initial data (which we showed exist in lemma \ref{InitialCond}). Consider any time period $[0,T]$ and let $\tau>0$ be any step size in $t$. Choose the number of steps $M$ such that $M\tau>T$. Minimize:
\begin{align*}
J^0(\n,\psi)&=\int_{-L}^L \left\{\frac{|\n-\n^0|^2}{2\tau}+\frac{|\psi-\psi^0|^2}{2\tau}\right\}\,dx+\mathcal{F}(\n,\psi)
\end{align*}
where $\mathcal{F}(\n,\psi)=\displaystyle\int_{-L}^L\left\{f_N+\frac{1}{q}f_{CL}+f_E\right\}\,dx$, with the given initial values, on $[0,\tau]$. We know that such a minimizer exists by theorem \ref{MinExistence}. Denote the minimizer by $(\n^1,\psi^1)$. Use the minimizer $(\n^1,\psi^1)$ as the initial values for the second time step $[\tau,2\tau]$ and minimize the new energy functional $J^1(\n,\psi)$. Repeating the process, we get a sequence of minimizing problems and a family of minimizers $(\n^m,\psi^m)$, $m=0,1,...,M$.

These minimizers satisfy Euler-Lagrange equations as well as an energy dissipation inequality.
Specifically, the minimizer $(n_1^m,n_2^m,n_3^m,\psi^m)$ satisfies the following four equations over the time interval $((m-1)\tau,m\tau]$ -  see details of the derivation in Appendix A,

\begin{flalign}\label{sE-L1}
&\int_{-L}^L \left\{(1-n^2_1)\delta_\tau n_1-n_1n_2\delta_\tau n_2-n_1n_3\delta_\tau n_3+(1-n^2_1)F_{n_1}-n_1n_2F_{n_2}+(1-n^2_1)'F_{n'_1}\right.&\\
&\qquad\quad -(n_1n_2)'F_{n'_2}-K(n_1'^2+n_2'^2+n_3'^2)n_1\left.-\frac{PE}{\bs} |\psi|^2n_1n_3\right\}u_1\nonumber&\\
&\quad+\left\{(1-n^2_1)F_{n'_1}-n_1n_2F_{n'_2}+Kn_1'\right\}u'_1\,dx=0\nonumber&
\end{flalign}
\begin{flalign}\label{sE-L2}
&\int_{-L}^L\left\{ -n_1n_2\delta_\tau n_1+(1-n^2_2)\delta_\tau n_2-n_2n_3\delta_\tau n_3-n_1n_2F_{n_1}+(1-n^2_2)F_{n_2}-(n_1n_2)'F_{n'_1}\right.&\\
&\qquad\quad+(1-n^2_2)'F_{n'_2}-K(n_1'^2+n_2'^2+n_3'^2)n_2\left.-\frac{PE}{\bs} |\psi|^2n_2n_3\right\}u_2&\nonumber\\
&\quad+\left\{-n_1n_2F_{n'_1}+(1-n^2_2)F_{n'_2}+Kn_2'\right\} u'_2\,dx=0\nonumber&
\end{flalign}
\begin{flalign}\label{sE-L3}
&\int_{-L}^L\left\{ -n_1n_3\delta_\tau n_1-n_2n_3\delta_\tau n_2+(1-n^2_3)\delta_\tau n_3-n_1n_3F_{n_1}-n_2n_3F_{n_2}-(n_1n_3)'F_{n'_1}\right.&\\
&\qquad\quad-(n_2n_3)'F_{n'_2}-K(n_1'^2+n_2'^2+n_3'^2)n_3\left.+\frac{PE}{\bs} |\psi|^2(1-n_3^2)\right\}u_3&\nonumber\allowdisplaybreaks\\
&\qquad+\left\{-n_1n_3F_{n'_1}-n_2n_3F_{n'_2}+Kn_3'\right\}u'_3\,dx=0\nonumber&
\end{flalign}
and
\begin{align}\label{sE-L4}
2\Re \int_{-L}^L&\frac{1}{2}\delta_\tau \psi\overline{\phi}+F_\psi\overline{\phi}+F_{\psi'}\overline{\phi'}+F_{\psi''}\overline{\phi''}+2g(\psi\overline{\phi})(|\psi|^2-1)+2(\psi\overline{\phi})'|\psi|^2{}'&\\
&+\frac{2}{q^2}(\psi\overline{\phi})''|\psi|^2{}''+\frac{2}{q^6}(\psi\overline{\phi})'''|\psi|^2{}'''+\frac{\rho}{q^6}(\psi'''\overline{\phi'''})+\frac{PE}{\bs}n_3(\psi\overline{\phi})\,dx=0,&\nonumber
\end{align}
where $(u_1,u_2,u_3,\phi)$ are the test functions such that $ (u_1,u_2,u_3) \in H^1(-L,L),\;\phi \in H^3(-L,L)$ with $\phi \in H^2_0(-L,L) \text{ and }\Re\{\phi''\cg\}(-L)=\Re\{\phi''\cg\}(L)=0.$\\
The elastic part of the smectic free energy density from (\ref{Energy}) is
\begin{align*}
F(n_1,n_2,n_3,\psi)&=\frac{a_{\perp}}{q^3}|\psi''-[(\frac{iq}{\bs}n_1\psi+n_2\psi')n_2]'-\frac{q^2}{1+b^2}\psi +\frac{q^2}{1+b^2}n_1^2\psi&\\
&\qquad\quad\left.-\frac{iq}{\bs}n_1n_2\psi'+\frac{c_{\perp}q^2}{2a_{\perp}}\psi|^2\right.&\allowdisplaybreaks\\
&\quad+\frac{a_{\|}}{q^3}|(\frac{n_1}{\bs}-\co)(-\frac{q^2}{\bs}n_1 \psi +iqn_2\psi'+q^2\co \psi)&\\
&\qquad\quad+[(\frac{iq}{\bs}n_1\psi+n_2\psi'-iq\co \psi)n_2]'|^2&\\
&\quad+\frac{c_{\|}}{q}|\frac{iq}{\bs}n_1 \psi+n_2 \psi'-iq\co \psi|^2.&
\end{align*}
We have used the notation $F_{n_1}$ for $\partial_{n_1}F$ and the fact that $\partial_x|z|^2=2\Re\{\overline{z}_xz\}$ for a complex number $z$. However, in the fourth equation, for $F_\psi$ where $F=a_1|g_1|^2+a_2|g_2|^2+a_3|g_3|^2$ means $a_1\overline{g_{1\psi}}g_1+a_2\overline{g_{2\psi}}g_2+a_3\overline{g_{3\psi}}g_3$. In addition, $\delta _\tau n_1$ is the difference quotient defined by $\frac{1}{\tau}(n_1-n_1^{m-1}).$
\begin{lemma}{(Energy Dissipation)}
\begin{equation}
\frac{1}{2}\sum\limits_{k=1}^m \tau (||\delta _\tau \n^k||_{L^2(-L,L)}^2+||\delta _\tau \psi^k||_{L^2(-L,L)}^2)+\mathcal{F}(\n^m,\psi^m)\le \mathcal{F}(\n^0,\psi^0)\quad \text{for $1 \le m \le M$.}
\label{EnrgyDsp}
\end{equation}
\end{lemma}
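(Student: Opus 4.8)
The plan is to exploit the minimizing property at each Rothe step, followed by a telescoping sum. Fix $k$ with $1\le k\le m$. By construction $(\n^k,\psi^k)$ minimizes over $X$ the functional
$$J^{k-1}(\n,\psi)=\int_{-L}^L\left\{\frac{|\n-\n^{k-1}|^2}{2\tau}+\frac{|\psi-\psi^{k-1}|^2}{2\tau}\right\}dx+\mathcal{F}(\n,\psi).$$
Such a minimizer exists: adding the continuous, nonnegative quadratic penalization to $\mathcal{F}$ preserves the coercivity and weak lower semicontinuity used in the proof of Theorem \ref{MinExistence} (the penalization is even weakly continuous along the subsequences produced there, since $\n_j\to\mathbf{m}$ and $\psi_j\to\xi$ uniformly). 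Moreover $(\n^{k-1},\psi^{k-1})\in X$ by induction: the base case is Lemma \ref{InitialCond}, and the inductive step holds because each Rothe iterate is obtained as a minimizer over $X$, hence lies in $X$ and is a legitimate test competitor at the next step.

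Since $(\n^{k-1},\psi^{k-1})$ is admissible in the $k$-th minimization, comparing values gives
$$J^{k-1}(\n^k,\psi^k)\le J^{k-1}(\n^{k-1},\psi^{k-1})=\mathcal{F}(\n^{k-1},\psi^{k-1}),$$
where we used that the penalization vanishes when evaluated at $(\n^{k-1},\psi^{k-1})$. Rewriting the penalization at $(\n^k,\psi^k)$ as $\tfrac{\tau}{2}|\delta_\tau\n^k|^2+\tfrac{\tau}{2}|\delta_\tau\psi^k|^2$ and integrating, this one-step estimate reads
$$\frac{\tau}{2}\left(\|\delta_\tau\n^k\|_{L^2(-L,L)}^2+\|\delta_\tau\psi^k\|_{L^2(-L,L)}^2\right)+\mathcal{F}(\n^k,\psi^k)\le\mathcal{F}(\n^{k-1},\psi^{k-1}).$$

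Summing over $k=1,\dots,m$, the energy terms on the two sides telescope, leaving
$$\frac{1}{2}\sum_{k=1}^m\tau\left(\|\delta_\tau\n^k\|_{L^2(-L,L)}^2+\|\delta_\tau\psi^k\|_{L^2(-L,L)}^2\right)+\mathcal{F}(\n^m,\psi^m)\le\mathcal{F}(\n^0,\psi^0),$$
which is the asserted inequality. I do not expect a genuine obstacle here; the only points requiring care are the remark that the perturbed functional $J^{k-1}$ still attains its infimum on $X$ (a short reuse of Theorem \ref{MinExistence}) and the bookkeeping that each iterate stays in $X$ so the comparison at the next step is valid. The Euler--Lagrange equations (\ref{sE-L1})--(\ref{sE-L4}) are not needed for this estimate, although they arise from the same minimization and will be used later for passing to the limit $\tau\to0$.
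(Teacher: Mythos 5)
Your proof is correct and follows essentially the same route as the paper: compare the $k$-th minimizer against the previous iterate to get the one-step inequality $\tfrac{\tau}{2}(\|\delta_\tau\n^k\|^2+\|\delta_\tau\psi^k\|^2)+\mathcal{F}(\n^k,\psi^k)\le\mathcal{F}(\n^{k-1},\psi^{k-1})$, then telescope. The paper phrases the summation as an iteration (adding one difference quotient at a time) rather than summing the one-step estimate, but this is the same argument, and your added remarks on existence of the perturbed minimizer and admissibility of the iterates are consistent with the paper's appeal to Theorem \ref{MinExistence}.
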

\begin{proof}
Let $m$ be an integer such that $1\le m\le M$ where $M\tau > T$. Since $(\n^m,\psi^m)$ is a minimizer of $J^m(\n,\psi)$, we have
$$J^m(\n^m,\psi^m) \le J^m(\n^{m-1},\psi^{m-1}) = \mathcal{F}(\n^{m-1},\psi^{m-1}).$$
Adding the $(m-1)^{st}$ difference quotient to both sides of the above inequality,
\begin{align*}
&\int_{-L}^L \frac{|\n^{m-1}-\n^{m-2}|^2}{2\tau}+\frac{|\psi^{m-1}-\psi^{m-2}|^2}{2\tau}\,dx+J^m(\n^m,\psi^m) \le J^{m-1}(\n^{m-1},\psi^{m-1}).&
\end{align*}
With  $J^{m-1}(\n^{m-1},\psi^{m-1})\le \mathcal{F}(\n^{m-2},\psi^{m-2})$, we get

$\dfrac{1}{2}\sum\limits_{k=m-1}^m \tau (||\delta _\tau \n^k||_{L^2(-L,L)}^2+||\delta _\tau \psi^k||_{L^2(-L,L)}^2)+\mathcal{F}(\n^m,\psi^m)\le \mathcal{F}(\n^{m-2},\psi^{m-2}).$\\

\noindent Adding the $(m-2)^{nd}$ difference quotient to both sides of the above inequality and iterating, we deduce the desired inequality.
\end{proof}
Set $\Omega=(-L,L).$ To extend the Euler-Lagrange equations to $\Omega_T=(-L,L)\times(0,T)$, we construct piecewise constant (in $t$) functions, for instance, $n_1^\tau(x,t)=n_1^\tau(x,m\tau)=n_1^m(x)\text{ for } t\in ((m-1)\tau,m\tau], m=1,...,M$ for $n_1$; and similarly for the other components. For piecewise constant test functions, we multiply the first Euler-Lagrange equation (\ref{sE-L1}) by $\tau$ and add up the equations as $m$ spans $1$ to $M$. As the integrand is independent of $t$, we get:
\begin{flalign}
\label{ELtau}
&\int_0^T\int_{-L}^L \left\{(1-(n^\tau_1)^2)\delta _\tau n_1(x,t)-n^\tau_1n^\tau_2\delta _\tau n_2(x,t)-n^\tau_1n^\tau_3\delta _\tau n_3(x,t)+(1-(n^\tau_1)^2)F_{n^\tau_1}\right.&\\
& \qquad\qquad-n^\tau_1n^\tau_2F_{n^\tau_2}+(1-(n^\tau_1)^2)'F_{n_1^\tau{}'}-(n^\tau_1n^\tau_2)'F_{n_2^\tau{}'}-K[(n_1^\tau{}')^2+(n_2^\tau{}')^2+(n_3^\tau{}')^2]n^\tau_1&\nonumber\\
&\qquad\qquad\left.-\frac{PE}{\bs} |\psi^\tau|^2n^\tau_1n^\tau_3\right\}u_1(x,t)&\nonumber\\
& \qquad\quad+\left\{(1-(n^\tau_1)^2)F_{n_1^\tau{}'}-n^\tau_1n^\tau_2F_{n_2^\tau{}'}+Kn^\tau_1{}'\right\}u'_1(x,t)\,dxdt=0\nonumber&
\end{flalign}
We define the Sobolev-Bochner space $H^{k,1}(\Omega_T)$ by
$$H^{k,1}(\Omega_T)=\{u(x,t)\in L^2(\Omega_T)\,;\,\partial^j_xu(x,t)\in L^2(\Omega_T),0\le j\le k \text{ and }\partial_tu(x,t)\in L^2(\Omega_T)\}. $$
Since our piecewise constant functions fail to belong to such a Sobolev space, we construct piecewise linear (in $t$) functions $(\tilde{n}_1^\tau(x,t))$, for instance, $\tilde{n}_1^\tau(x,t)=\frac{t-(m-1)\tau}{\tau}n_1^m(x)+\frac{m\tau-t}{\tau}n_1^{m-1}(x), t\in [(m-1)\tau, m\tau) \text{  for } m=1,...,M$; and similarly for the other components.
\begin{lemma}
$\{\tilde{n}_1^\tau(x,t)\}$ is uniformly bounded in $H^{1,1}(\Omega_T)$ for any $\tau$. Similarly, $\{\tilde{n}_2^\tau(x,t)\}$ and $\{\tilde{n}_3^\tau(x,t)\}$ are uniformly bounded in $H^{1,1}(\Omega_T)$,\: $\{\widetilde{\psi}^\tau(x,t)\}$ is bounded in $H^{2,1}(\Omega_T)$, and $\{|\widetilde{\psi}^\tau(x,t)|^2\}$ is bounded in $H^{3,1}(\Omega_T)$ for any $\tau$.
\end{lemma}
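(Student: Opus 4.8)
The plan is to read off all the required bounds from the energy dissipation inequality (\ref{EnrgyDsp}), the coercivity of $\mathcal{F}$, and the static estimates of Lemmas \ref{InitialCond} and \ref{Bdpsi'}, and then to transfer the resulting per-time-step bounds to the piecewise-linear interpolants. Fix $\tau\le 1$ and take $M=\lceil T/\tau\rceil$, so that $T<M\tau<T+1$. Choosing $m=M$ in (\ref{EnrgyDsp}) and invoking Lemma \ref{InitialCond} gives, uniformly in $\tau$,
\[
\sum_{k=1}^{M}\tau\big(\|\delta_\tau\n^k\|_{L^2(\Omega)}^2+\|\delta_\tau\psi^k\|_{L^2(\Omega)}^2\big)\le 2\,\mathcal{F}(\n^0,\psi^0)\le 2C_0,
\qquad \mathcal{F}(\n^m,\psi^m)\le C_0\quad(0\le m\le M).
\]
Since $\partial_t\widetilde{\n}^\tau\equiv\delta_\tau\n^m$ on $((m-1)\tau,m\tau)$, integrating in $t$ yields $\|\partial_t\widetilde{\n}^\tau\|_{L^2(\Omega_T)}^2\le\sum_{k=1}^{M}\tau\|\delta_\tau\n^k\|_{L^2(\Omega)}^2\le 2C_0$, and likewise $\|\partial_t\widetilde{\psi}^\tau\|_{L^2(\Omega_T)}^2\le 2C_0$. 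Because $\widetilde{\n}^\tau$ is a convex combination of unit vectors it satisfies $|\widetilde{\n}^\tau|\le 1$, and because $|\psi^m|$ is uniformly bounded (the $L^\infty$ bound noted after Lemma \ref{InitialCond}, reproved as $|\psi^m|^2\le M_1$ in the proof of Lemma \ref{Bdpsi'}) the convex combination $\widetilde{\psi}^\tau$ satisfies $|\widetilde{\psi}^\tau|\le M_1^{1/2}$; hence $\widetilde{\n}^\tau$, $\widetilde{\psi}^\tau$ and $|\widetilde{\psi}^\tau|^2$ are already bounded in $L^2(\Omega_T)$, and $\partial_t|\widetilde{\psi}^\tau|^2=2\Re\{\overline{\widetilde{\psi}^\tau}\,\delta_\tau\psi^m\}$ is bounded in $L^2(\Omega_T)$.

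I would next bound the spatial derivatives at each time step, uniformly in $m$ and $\tau$, using only $\mathcal{F}(\n^m,\psi^m)\le C_0$. Coercivity of the Oseen--Frank term together with $|\n^m|\equiv1$ gives $\|\n^m\|_{H^1(\Omega)}\le C$. The smectic penalization density controls $\int(|\psi^m|^2-1)^2$, $\int(|\psi^m|^2{}')^2$, $q^{-2}\int(|\psi^m|^2{}'')^2$ and $q^{-6}\int(|\psi^m|^2{}''')^2$, so $\||\psi^m|^2\|_{H^3(\Omega)}\le C$ with a constant depending on $q$ and $C_0$ but \emph{not} on $\rho$. Finally the regularizing term $\frac{\rho}{q^6}|\psi'''|^2$ gives $\|(\psi^m)'''\|_{L^2(\Omega)}^2\le q^6C_0/\rho$, and this together with the uniform $L^\infty$ bound on $|\psi^m|$ and a one-dimensional interpolation inequality yields $\|\psi^m\|_{H^3(\Omega)}\le C$ — in particular the $H^2(\Omega)$ bound that the lemma asserts; here I would also keep $|(\psi^m)'|\le C_1 q$ from Lemma \ref{Bdpsi'} at hand for the $L^\infty$ estimates used in the next step.

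To conclude I would transfer these uniform-in-$m$ bounds to the interpolants. For $0\le j\le1$ the functions $\partial_x^j\widetilde{\n}^\tau(\cdot,t)$ are, on each subinterval, convex combinations of $\partial_x^j\n^m$ and $\partial_x^j\n^{m-1}$, so $|\partial_x^j\widetilde{\n}^\tau(x,t)|^2\le|\partial_x^j\n^m(x)|^2+|\partial_x^j\n^{m-1}(x)|^2$; integrating over $\Omega_T$ and summing the subintervals bounds $\|\partial_x^j\widetilde{\n}^\tau\|_{L^2(\Omega_T)}^2$ by $2M\tau\sup_m\|\partial_x^j\n^m\|_{L^2(\Omega)}^2\le 2(T+1)C$, uniformly in $\tau$; the identical argument with $0\le j\le2$ handles $\widetilde{\psi}^\tau$. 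For $|\widetilde{\psi}^\tau|^2$, which is a nonlinear function of the interpolant, I would instead apply the Leibniz rule, $\partial_x^j|\widetilde{\psi}^\tau|^2=\sum_{a+b=j}\binom{j}{a}\partial_x^a\widetilde{\psi}^\tau\,\partial_x^b\overline{\widetilde{\psi}^\tau}$ for $0\le j\le3$, and estimate each product in $L^2(\Omega_T)$ by pairing the lower-order factor, bounded in $L^\infty$ via the $H^3(\Omega)$ bounds on $\psi^m,\psi^{m-1}$ and one-dimensional Sobolev embedding, against the higher-order factor in $L^2$; summing over the subintervals as above gives the $H^{3,1}(\Omega_T)$ bound.

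The step I expect to be the main obstacle is the uniform spatial control of $\psi^m$ itself: the Chen--Lubensky elastic terms in $F$ only control combinations such as $\frac{\psi''}{q}(1-n_2^2)$ up to lower-order terms, so their coefficient degenerates precisely where $\n$ aligns with the layer normal, and no second-derivative estimate for $\psi$ follows from $F$ alone — this is exactly the purpose of the regularizing term $\frac{\rho}{q^6}|\psi'''|^2$, and it explains why (unlike the $|\psi|^2$ bound, which is $\rho$-independent) the bound on $\psi$ is only claimed up to $H^{2,1}(\Omega_T)$ and is not uniform in $\rho$. The only other point requiring care is that $M\tau$ stay bounded as $\tau\to0$, which is why we fix $\tau\le1$ and $M=\lceil T/\tau\rceil$.
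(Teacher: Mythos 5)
Your argument is correct and follows the same route as the paper's (very terse) proof: the time derivatives of the interpolants are exactly the difference quotients, which (\ref{EnrgyDsp}) controls, and the spatial derivatives are controlled at each time step by coercivity of $\mathcal{F}$ together with $\mathcal{F}(\n^m,\psi^m)\le C_0$, then transferred to the convex combinations. One remark: your closing claim that the elastic terms degenerate where $\n$ aligns with the layer normal and hence give no $\psi''$ estimate is a misreading of the energy — the perpendicular and parallel terms together produce the coefficient $a_\perp(1-n_2^2)^2+a_\| n_2^4\ge \frac{a_\perp a_\|}{a_\perp+a_\|}>0$ (this is exactly the combination exploited in Appendix B to obtain (\ref{est-psi})), so a $\rho$-independent bound $\|\psi^m{}''\|_{L^2}\le C(q)$ does follow from $\mathcal{F}\le C_0$ once the lower-order terms are absorbed using $|\psi'|/q\le C_1$ and $\|\n^m{}'\|_{L^2}\le C$. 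Your alternative route through the regularizer, with constants depending on $\rho$, still proves the lemma as stated (only uniformity in $\tau$ is claimed, and $\rho$ is fixed at this stage), so this does not create a gap; the regularizer is genuinely needed only for the third-derivative control of $\psi$ entering the $H^{3,1}$ bound on $|\widetilde{\psi}^\tau|^2$ and for the later local estimates.
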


\begin{proof} We have
\begin{flalign*}
&\partial_t\tilde{n}_1^\tau(x,t)=\frac{n_1^m(x)-n_1^{m-1}(x)}{\tau},\quad t\in ((m-1)\tau, m\tau) \text{  for } m=1,...,M.&
\end{flalign*}
\begin{flalign*}
\int_0^T\int_{-L}^L|\partial_t\tilde{n}_1^\tau(x,t)|^2\,dxdt&\le \sum_{m=1}^M\int_{(m-1)\tau}^{m\tau}\int_{-L}^L\left|\frac{n_1^m(x)-n_1^{m-1}(x)}{\tau}\right|^2\,dxdt&\\
&=\sum_{m=1}^M\tau\int_{-L}^L\left|\frac{n_1^m(x)-n_1^{m-1}(x)}{\tau}\right|^2\,dx\le C_1,&
\end{flalign*}
where that last inequality is true by (\ref{EnrgyDsp}). From the well-prepared initial data, we get the uniform bound.
In a similar way, we can prove $\int_0^T\int_{-L}^L|\nabla_x\tilde{n}_1^\tau(x,t)|^2\,dxdt$ is uniformly bounded.
\end{proof}

Since $\{\tilde{n}_1^\tau\}$ is uniformly bounded in $H^{1,1}(\Omega_T)$, there is a subsequence, still denoted by $\{\tilde{n}_1^\tau\},$ which converges strongly to some $n_1$ in $L^2(\Omega_T)$. Also, by weak compactness of Sobolev spaces we can find a subsequence, $\{\tilde{n}_1^\tau\}$ converging weakly to $n_1$ in $H^{1,1}(\Omega_T).$ Similarly, we can prove strong convergence of $\tilde{n}_2^\tau$, $\tilde{n}_3^\tau$, $\widetilde{\psi}^\tau$, and $|\widetilde{\psi}^\tau|^2$; and weak convergence in the corresponding Sobolev-Bochner spaces.

In the analysis of the discrete gradient flow, it is more convenient to work with the piecewise constant approximations than with the  piecewise linear approximations and since the two have the same asymptotic behavior, we use $(n_1^\tau,n_2^\tau,n_3^\tau,\psi^\tau)$ from now on.

\subsection{Convergence of the Discrete Gradient Flow}

Due to the high nonlinearity of the discrete system, the above convergence of subsequences is not enough to prove convergence of the discrete gradient flow. It turns out that we need higher regularity, which is achieved through the following three major steps that are carried out for $q$  sufficiently large.

{\bf Step I. Local Regularity.} We utilize the Euler-Lagrange equations themselves, and replace the test functions by convenient ones. For instance, in equation (\ref{ELtau}), we let $u_1=\Delta_{-h}[(\Delta_hn^\tau_1)\varphi^2]$ for a small $h>0$, where $\varphi\in C_0^\infty(-L,L)$ is a cut-off function s.t. $0\le\varphi\le 1$ and
\begin{equation}\varphi(x) = \left\{
     \begin{array}{lr}
       1 & \text{if } x \in (-L+\eta,L-\eta)\\
       0 & \text{if } x \notin (-L+\frac{\eta}{2},L-\frac{\eta}{2})
     \end{array}
   \right.\text{ for } \eta>0.\label{cut-off}\end{equation} From the resulting equation, we are able to deduce an estimate on $\int_{\Omega_T}|\Delta_hn_1^\tau{}'\varphi|^2\,dxdt$ in terms of the initial energy bound and small multiples of the integral itself (See Appendix B). We repeat the process for the remaining Euler-Lagrange equations, as the equations are coupled, and get the following estimate
\begin{flalign*}
\int_0^T\int_{-L+\eta}^{L-\eta}\left(|\Delta_h\n^\tau{}'|^2+|\Delta_h\psi^\tau{}''|^2+|\Delta_h|\psi^\tau|^2{}'''|^2+\rho\right.&\left.|\Delta_h\psi^\tau{}'''|^2\right)\varphi^2\,dxdt\le C(\rho,q).
\end{flalign*}
Letting $h\to 0$,
\begin{equation*}
\int_0^T\int_{-L+\eta}^{L-\eta}|\n^\tau{}''|^2+|\psi^\tau{}'''|^2+||\psi^\tau|^2{}^{(4)}|^2+\rho|\psi^{\tau(4)}|^2\,dxdt\le C(\rho,q).
\end{equation*}
We are able to remove the dependence on $\rho$ in the final estimate (See Appendix B) to get the following theorem.
\begin{theorem}
\begin{equation}
\int_{\Omega'_T}|\n^\tau{}''|^2+|\psi^\tau{}'''|^2+||\psi^\tau|^2{}^{(4)}|^2+\rho|\psi^{\tau(4)}|^2\,dxdt\le C(q).
\label{LocEst}
\end{equation}
where $\Omega'_T=(L-\eta,L+\eta)\times [0,T].$
\end{theorem}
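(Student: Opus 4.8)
The plan is to run the Nirenberg difference-quotient (translation) method on the weak Euler--Lagrange equations (\ref{ELtau}) and (\ref{sE-L4}), localized by the cut-off $\varphi$ of (\ref{cut-off}), then let $h\to 0$, and finally remove the $\rho$-dependence as a separate step. In the director equations I would take the test functions $u_i=\Delta_{-h}[(\Delta_h n_i^\tau)\varphi^2]$, $i=1,2,3$, and in the $\psi$-equation $\phi=\Delta_{-h}[(\Delta_h\psi^\tau)\varphi^2]$, for $h>0$ small; since $\varphi$ is supported well inside $(-L,L)$, the function $\phi$ is supported away from $\pm L$, so $u_i\in H^1$, $\phi\in H^3\cap H^2_0$, and the boundary condition $\Re\{\phi''\cg\}(\pm L)=0$ holds trivially. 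The algebra is driven by $\Delta_h(fg)=(\Delta_h f)\,g(\cdot+h)+f\,\Delta_h g$, the discrete integration by parts $\int f\,\Delta_{-h}g\,dx=-\int(\Delta_h f)\,g\,dx$, and $\|\Delta_h f\|_{L^2}\le\|f'\|_{L^2}$; the difference quotients are taken in $x$, the estimates are run on each time slice, and the outcomes are summed in $m$ and recast in the $\int_0^T$ form exemplified by (\ref{ELtau}).

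For each of the four equations the leading term survives as a nonnegative quadratic in the top difference quotient. The $Kn_i'u_i'$ term gives a constant multiple of $\int_{\Omega_T}|\Delta_h n_i^\tau{}'|^2\varphi^2$; the contribution of $F_{\psi''}\overline{\phi''}$ gives $\frac{1}{q^3}\int_{\Omega_T}c(\n^\tau)\,|\Delta_h\psi^\tau{}''|^2\varphi^2$, where $c(\n)=a_{\perp}(n_1^2+n_3^2)^2+a_{\|}n_2^4\ge\frac12\min(a_{\perp},a_{\|})>0$ — this uniform positivity, obtained by combining the $a_{\perp}$ and $a_{\|}$ pieces of (\ref{Energy}), is what lets this term control $\Delta_h\psi^\tau{}''$ even where $\n^\tau$ is nearly parallel to a coordinate axis; the $\frac{2}{q^6}(\psi\overline{\phi})'''|\psi|^2{}'''$ term gives $\frac{2}{q^6}\int_{\Omega_T}|\Delta_h|\psi^\tau|^2{}'''|^2\varphi^2$; and the regularizer $\frac{\rho}{q^6}(\psi'''\overline{\phi'''})$ gives $\frac{\rho}{q^6}\int_{\Omega_T}|\Delta_h\psi^\tau{}'''|^2\varphi^2$. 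Every remaining term — those with a derivative of $\varphi$, the zeroth- and first-order terms, the $\n$--$\psi$ coupling terms, and the commutator terms involving $\Delta_h$ of a coefficient — is split by Young's inequality into a small, $h$-uniform multiple of one of the four leading terms (absorbed) plus a remainder controlled by the $\rho$-independent a priori bounds: Lemma \ref{Bdpsi'} gives $|\psi^\tau{}'|/q\le C_1$; the energy-dissipation inequality (\ref{EnrgyDsp}) together with Lemma \ref{InitialCond} gives the $L^2(\Omega_T)$-bounds on $\nabla\n^\tau$, $|\psi^\tau|$, $|\psi^\tau|^2{}'$, $|\psi^\tau|^2{}''/q^2$, $|\psi^\tau|^2{}'''/q^3$ and, through the elastic part of (\ref{Energy}), on $\psi^\tau{}''$; and one-dimensional Sobolev interpolation upgrades these to the $L^\infty$-in-$x$ bounds needed for the products. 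Summing the four coupled inequalities and absorbing yields the displayed difference-quotient estimate with right-hand side $C(\rho,q)$; letting $h\to 0$ and using weak lower semicontinuity, the four leading terms pass to $\int|\n^\tau{}''|^2$, $\frac{1}{q^3}\int c(\n^\tau)|\psi^\tau{}'''|^2$, $\frac{2}{q^6}\int||\psi^\tau|^2{}^{(4)}|^2$ and $\frac{\rho}{q^6}\int|\psi^{\tau(4)}|^2$ over the subdomain $\Omega'_T$ on which $\varphi\equiv 1$, which is (\ref{LocEst}) with constant $C(\rho,q)$.

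It remains to remove the $\rho$ from the constant. The point is that $\rho$ enters the right-hand side only through error terms spawned by the regularizer $\frac{\rho}{q^6}(\psi'''\overline{\phi'''})$ once $\phi$ carries the weight $\varphi^2$; each such error has the form $\frac{\rho}{q^6}\int(\Delta_h\psi^\tau{}'''\,\varphi)\cdot(\text{lower-order }\Delta_h\psi^\tau\text{ terms and }\varphi\text{ derivatives})$, which Young splits as $\frac{\rho}{2q^6}\int|\Delta_h\psi^\tau{}'''\varphi|^2$ (absorbed) plus $\frac{\rho}{2q^6}\int(\text{lower order})^2$. Since $0\le\rho<1$ and the integrals $\int|\Delta_h\psi^\tau{}''|^2$, $\int|\Delta_h\psi^\tau{}'|^2$, $\int|\Delta_h\psi^\tau|^2$ over $\mathrm{supp}\,\varphi$ are already bounded by $C(q)$ independently of $\rho$ — the $\psi^\tau{}''$-bound from the elastic part of (\ref{Energy}), the $\psi^\tau{}'$-bound from Lemma \ref{Bdpsi'}, the $\psi^\tau$-bound from (\ref{EnrgyDsp}) — these remainders are $\le C(q)$. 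Feeding this back closes the estimate with a right-hand side depending only on $q$ (and on the fixed $\eta$ and the initial energy), giving (\ref{LocEst}). In particular the bound on $\int_{\Omega'_T}|\psi^\tau{}'''|^2$ is $\rho$-uniform precisely because it originates in the $F_{\psi''}$ term and not in the regularizer; the regularizer only ever contributes the genuinely $\rho$-weighted quantity $\rho\int|\psi^{\tau(4)}|^2$, whose coefficient is allowed to depend on $\rho$.

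I expect the main obstacle to be exactly this $\rho$-uniform closing of the estimate: it relies on the structural facts that no $\psi^\tau{}'''$-control is ever drawn from the regularizer and that every regularizer-generated error carries a factor $\rho<1$ in front of a $\rho$-independent quantity, and one must verify carefully that all the generated terms really are of that shape. The accompanying labor — the bulk of Appendix B — is the bookkeeping of the difference-quotient expansions of the four coupled equations: checking that each of the many cross terms is either absorbable or estimable purely through the energy a priori bounds, keeping track of the correct powers of $q$ throughout.
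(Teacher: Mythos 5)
Your overall strategy coincides with the paper's (Appendix B): the same localized difference-quotient test functions $u_i=\Delta_{-h}[(\Delta_h n_i^\tau)\varphi^2]$ and $\phi=\Delta_{-h}[(\Delta_h\psi^\tau)\varphi^2]$, the same identification of the coercive leading terms (the $K$-term, the combined $a_\perp(1-n_2^2)^2+a_\| n_2^4$ coefficient of $|\Delta_h\psi''|^2/q^3$, the $q^{-6}|\Delta_h|\psi|^2{}'''|^2$ term, and the $\rho$-weighted term), and the same absorption of the remaining terms via Young's inequality, the energy dissipation bound (\ref{EnrgyDsp}), and Lemma \ref{Bdpsi'}. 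Up to the conclusion that the difference-quotient estimate closes with a constant $C(\rho,q)$, your argument tracks the paper.

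The gap is in the $\rho$-removal step, which you correctly flag as the crux but then resolve by the wrong mechanism. You claim that $\rho$ enters the right-hand side only through error terms spawned by the regularizer $\frac{\rho}{q^6}\psi'''\overline{\phi'''}$, each carrying an explicit factor $\rho<1$ in front of a $\rho$-independent quantity, and that ``no $\psi^\tau{}'''$-control is ever drawn from the regularizer.'' That is not where the $\rho$-dependence actually comes from, and the second assertion is false at the difference-quotient stage. The culprit is the penalization term $\frac{2}{q^6}(\psi\overline{\phi})'''|\psi|^2{}'''$: expanding $2\Re\{\psi\,\Delta_{-h}[\Delta_h\overline{\psi}\,\varphi^2]\}=\Delta_{-h}[\Delta_h|\psi|^2\varphi^2]-\Delta_{-h}[\Delta_h\overline{\psi}\,h\,\Delta_h\psi\,\varphi^2]-2\Delta_{-h}\overline{\psi}\,\Delta_{-h}\psi\,\varphi_{-h}^2$ produces a commutator term of the schematic form $\frac{h}{q^6}\int\Re\{\Delta_h\psi'''\,\Delta_h\overline{\psi}\}\varphi^2\,\Delta_h|\psi|^2{}'''$, which after Young's inequality leaves $\frac{Ch^2}{q^4}\int|\Delta_h\psi'''\varphi|^2$. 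At this stage the \emph{only} available control of $\Delta_h\psi'''$ is the regularizer term $\frac{\rho}{q^6}\int|\Delta_h\psi'''\varphi|^2$ on the left (the a priori bound $\int|\psi'''|^2\le Cq^6/\rho$ from the energy is itself $\rho$-dependent), so absorption forces $h^2\lesssim\rho/q^2$ and the first-pass constant is genuinely $C(\rho,q)$, not $C(q)$. Your proposed splitting cannot repair this because the offending term does not carry a factor of $\rho$ and the ``lower-order'' quantities it multiplies are not $\rho$-uniformly bounded.

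The paper closes this gap with a two-pass bootstrap that is absent from your proposal: the first (difference-quotient) pass is used only qualitatively, to conclude after $h\to0$ that $\n^\tau{}''$, $\psi^\tau{}'''$, $|\psi^\tau|^2{}^{(4)}$ and $\psi^{\tau(4)}$ lie in $L^2$ locally (with $\rho$-dependent bounds); then \emph{all} the estimates are rerun with genuine derivatives in place of difference quotients. In the derivative version the problematic commutator term simply does not arise (it carries an explicit factor of $h$), and every remaining term is controlled $\rho$-uniformly by the energy bounds, yielding (\ref{LocEst}) with $C=C(q)$. You would need to add this second pass (or an equivalent device) for the proof to be complete.
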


{\bf Step II. Higher Local Regularity.} Due to the nature of the local regularity achieved thus far, we are not able to attain the convergence required. We therefore prove higher local regularity by following a similar method. Specifically, the test function in the first equation is now replaced by $u_1=\Delta_{-h}[(\Delta_hn^\tau{}'_1)\varphi^2]'$ for a small $h>0$ and with the same cut-off function as before. The difference here is that we require more of the initial conditions on $\n$ (See Appendix C). We complete the estimates by removing the dependence on $\rho$, as before, to get the following result.
\begin{theorem}
Under the assumption that $\displaystyle\int_{\Omega'}|\n^0{}''|^2\,dx$ is initially bounded, we have
\begin{equation}
\int_{\Omega'_T}|\n^\tau{}'''|^2+|\psi^{\tau(4)}|^2+||\psi^\tau|^2{}^{(5)}|^2+\rho|\psi^{\tau(5)}|^2\,dxdt\le C(q).
\label{HigherLocEst}
\end{equation}
\end{theorem}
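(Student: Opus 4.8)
The plan is to bootstrap the Step~I estimate (\ref{LocEst}) by running the same Nirenberg translation argument one spatial derivative higher, and then to strip off the $\rho$-dependence exactly as was done for (\ref{LocEst}). Throughout, $\varphi$ is the cut-off (\ref{cut-off}), $h>0$ is small and $\Omega'$ is the set on which $\varphi=1$. A preliminary observation: each time slice $(\n^m,\psi^m)$ minimizes the elliptic functional $J^m$, so for $\rho>0$ interior elliptic regularity makes $\n^m$ and $\psi^m$ smooth on $(-L,L)$; hence all the translated test functions below are admissible, and the real content is the uniformity of the bounds in $\tau$, and afterwards in $\rho$.

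\textbf{The difference-quotient estimate.} Fix $\rho>0$. In the director equations, written over $\Omega_T$ as in (\ref{ELtau}), insert the test functions $u_i=\Delta_{-h}\bigl[(\Delta_h n_i^\tau{}')\varphi^2\bigr]'$, and in the $\psi$-equation (\ref{sE-L4}) insert the analogous translated test function carrying one more spatial derivative than the one used for (\ref{LocEst}). Transferring $\Delta_{-h}$ onto the conjugate factor via $\int f\,\Delta_{-h}g=\int(\Delta_h f)\,g$, expanding difference quotients of products, and integrating by parts, the principal contributions of the Oseen--Frank term $\frac12K|\n'|^2$, of the $a_\perp,a_\|,c_\perp,c_\|$ elastic terms, of the $\frac1{q^6}(|\psi|^2{}''')^2$ term and of the regularizer $\frac\rho{q^6}|\psi'''|^2$ assemble into the nonnegative quantity
\begin{equation*}
\int_0^T\!\!\int_{-L}^{L}\Bigl(|\Delta_h\n^\tau{}''|^2+|\Delta_h\psi^\tau{}'''|^2+\bigl|\Delta_h\,|\psi^\tau|^2{}^{(4)}\bigr|^2+\rho\,|\Delta_h\psi^{\tau(4)}|^2\Bigr)\varphi^2\,dx\,dt
\end{equation*}
(with $q$-dependent constants), together with a harmless final-time term coming from summation by parts in the discrete time derivatives. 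Everything else is moved to the right and estimated as in Appendix~B: the derivative-of-$\varphi$ commutators and purely lower-order products are controlled by the $L^\infty$ bounds on $|\psi^\tau|$ and on $|\psi^\tau{}'|/q$ (Lemma~\ref{Bdpsi'}), the energy bound (\ref{Energy}) and the Step~I estimate (\ref{LocEst}); the difference quotients of $\delta_\tau\n$ and $\delta_\tau\psi$ contribute $\|\partial_t\widetilde{\n}^\tau\|_{L^2(\Omega_T)}+\|\partial_t\widetilde{\psi}^\tau\|_{L^2(\Omega_T)}$, uniformly bounded by the energy dissipation inequality (\ref{EnrgyDsp}); the same-order cross terms forced by the covariant structure of $F$ (such as $(\Delta_h\n^\tau{}'')(\Delta_h\psi^\tau)$ or $(\Delta_h\psi^\tau{}''')(\Delta_h\n^\tau)$) are split by Young's inequality so that one factor enters the displayed quantity with an arbitrarily small constant, to be absorbed on the left, the other being already controlled by (\ref{LocEst}); and the initial time slice leaves $\frac12\int|\Delta_h\n^0{}'|^2\varphi^2\le\frac12\int_{\Omega'}|\n^0{}''|^2$ on the right, which is finite precisely by the hypothesis of the theorem. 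This yields a bound $C(\rho,q)$ uniform in $h$, and letting $h\to0$ gives (\ref{HigherLocEst}) with a constant that still depends on $\rho$.

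\textbf{Removing the regularization.} As in Appendix~B for (\ref{LocEst}), one rearranges the $\psi$-estimate so that every fourth- or fifth-order derivative of $\psi^\tau$ surviving on the right-hand side is first integrated by parts onto a factor of strictly lower order; the only residual top-order $\psi^\tau$-term is then an honest multiple $\varepsilon\rho\int|\Delta_h\psi^{\tau(4)}\varphi|^2$, absorbed into the identical term on the left uniformly in $\rho\in[0,1)$. This makes the constant independent of $\rho$ and completes the proof.

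\textbf{The main obstacle.} The hard part is the bookkeeping of the terms of top formal order that are not sign-definite. Since $F$ couples $\n$ with $\psi,\psi',\psi''$ through $q$-dependent coefficients, two difference quotients spawn many such cross terms, and for each one must correctly decide which factor is absorbed into the coercive quantity and which is controlled by (\ref{LocEst}), taking care that the norm invoked for the latter does not itself secretly involve a $\psi^{\tau(4)}$ with no $\rho$ in front, since $\rho$-free $L^2_{loc}$ control of $\psi^{\tau(4)}$ is an \emph{output} of this bootstrap, not an input. This is also why the extra hypothesis $\int_{\Omega'}|\n^0{}''|^2<\infty$ is imposed: summation by parts in the discrete time derivative unavoidably leaves $\int_{\Omega'}|\Delta_h\n^0{}'|^2$ on the right, so without an $H^2$ bound on $\n^0$ the induction has no base. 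The $\rho$-removal is delicate for the same reason and, as with Step~I, is relegated to Appendix~C; the remainder is a long but routine computation parallel to the proof of (\ref{LocEst}).
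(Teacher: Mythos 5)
Your proposal follows essentially the same route as the paper: the translated test function $u_i=\Delta_{-h}[(\Delta_h n_i^\tau{}')\varphi^2]'$, absorption of cross terms via Young's inequality against the Step~I bounds, the discrete summation by parts in time that leaves $\tfrac12\int_\Omega|\Delta_h\n^0{}'\varphi|^2\,dx$ on the right (which is exactly where the paper's Appendix~C locates the need for the hypothesis $\n^0{}''\in L^2(\Omega')$), and the same $\rho$-removal by rerunning the estimates with derivatives in place of difference quotients. The argument is correct and matches the paper's proof in structure and in the identification of the key obstruction.
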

To be able to extend the estimates to the full domain, we need to get rid of the regularization term at this stage. The regularity results, (\ref{LocEst}) and (\ref{HigherLocEst}), are set up in a way that allows for this, due to their independence of $\rho$. However, to recover the Euler-Lagrange equations when we let $\rho\to 0$, more should be done (See Appendix D). Specifically, the fourth Euler-Lagrange equation recovered is
\begin{align}
\label{E-L4}
2\Re \int_0^T\int_{-L}^L&\frac{1}{2}\delta_\tau \psi\overline{\phi}+F_{\psi^\tau}\overline{\phi}+F_{\psi^\tau{}'}\overline{\phi'}+F_{\psi^\tau{}''}\overline{\phi''}+2g(\psi^\tau\overline{\phi})(|\psi^\tau|^2-1)+2(\psi^\tau\overline{\phi})'|\psi^\tau|^2{}'&\\
&+\frac{2}{q^2}(\psi^\tau\overline{\phi})''|\psi^\tau|^2{}''+\frac{2}{q^6}(\psi^\tau\overline{\phi})'''|\psi^\tau|^2{}'''+\frac{PE}{\bs}n_3(\psi^\tau\overline{\phi})\,dxdt=0.\nonumber\\\nonumber&
\end{align}

{\bf Step III. Regularity up to the Boundary.}

\begin{theorem}
\begin{equation}
\int_{\Omega_T}|\n^\tau{}''|^2+|\psi^\tau{}'''|^2+||\psi^\tau|^2{}^{(5)}|^2\,dxdt\le C(q).\\
\label{BdryEst}
\end{equation}
\end{theorem}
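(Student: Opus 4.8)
The goal is to drop the interior restriction in (\ref{LocEst}) and (\ref{HigherLocEst}). The plan is to continue the difference-quotient bootstrap of Steps~I and II, but near each endpoint $x=\pm L$ and with a cut-off $\varphi$ that now equals $1$ up to the endpoint (say $\varphi\equiv1$ on $(L-\eta,L]$ and $\varphi\equiv0$ on $(-L,L-2\eta]$, so that $\{\varphi=1\}$ overlaps the interior region of (\ref{LocEst}) once $\eta$ is small). I describe the right endpoint; the left is symmetric, and patching the two boundary estimates with (\ref{LocEst}) covers $\Omega_T$. As in the passage from (\ref{sE-L1}) to (\ref{ELtau}), each estimate is carried out on a single time step and then multiplied by $\tau$ and summed over $m=1,\dots,M$, so by the energy dissipation (\ref{EnrgyDsp}) all constants are uniform in $\tau$.

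For the director equations (\ref{sE-L1})--(\ref{sE-L3}), $X$ carries no trace condition on $\n$, so $u_i\sim\Delta_{-h}[(\Delta_h n_i^\tau)\varphi^2]$ remains an admissible $H^1(-L,L)$ test function even though $\varphi$ does not vanish at $L$; inserting it reproduces the Appendix~B estimate localized at $L$, the new endpoint contributions from the discrete integration by parts being controlled by the uniform $C^{0,1/2}$ bounds on $\n^\tau$, $\psi^\tau{}'$ and by (\ref{EnrgyDsp}). For the order-parameter equation (\ref{E-L4}) the test space demands $\phi\in H^2_0(-L,L)$ with $\Re\{\phi''\overline{\psi^\tau}\}(\pm L)=0$, so one subtracts the fixed data, works with $w^\tau:=\psi^\tau-\psi_0\in H^2_0(-L,L)$, and builds $\phi$ from a difference quotient of $w^\tau$ (and, at the second-order stage, of $(w^\tau)'$) times $\varphi^2$, modified by a lower-order correction, if necessary, to meet the constraint $\Re\{\phi''\overline{\psi^\tau}\}(\pm L)=0$ --- which, since $\phi=\phi'=0$ at $\pm L$, is exactly the linearization at $\psi^\tau$ of $|\psi^\tau|^2{}''(\pm L)=0$. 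Running the Appendix~B/C computation for (\ref{E-L4}) with this $\phi$, keeping the $\frac{2}{q^2}(|\psi^\tau|^2{}'')^2$- and $\frac{1}{q^6}(|\psi^\tau|^2{}''')^2$-type terms on the favourable side, produces the companion estimate for difference quotients of $\psi^\tau{}''$ and $|\psi^\tau|^2{}'''$; because the four equations are coupled through $F$, exactly the cross terms of Steps~I and II appear. Adding the four inequalities, absorbing with a small $\varepsilon$, letting $h\to0$, and patching with (\ref{LocEst}) on the overlap yields $\n^\tau{}''\in L^2(\Omega_T)$, $\psi^\tau{}'''\in L^2(\Omega_T)$, $|\psi^\tau|^2{}^{(4)}\in L^2(\Omega_T)$; repeating one derivative higher (as in Step~II, under the same hypothesis that $\int_{\Omega'}|\n^0{}''|^2\,dx$ is bounded, valid for the well-prepared data) upgrades this to $|\psi^\tau|^2{}^{(5)}\in L^2(\Omega_T)$, which is (\ref{BdryEst}). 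Nothing here reintroduces $\rho$, already removed before Step~III.

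I expect the main obstacle to be the endpoint terms generated once $\varphi$ no longer annihilates the traces at $x=\pm L$. For the director equations these are mild: the natural boundary relation built into the weak form of (\ref{sE-L1})--(\ref{sE-L3}) cancels the leading term, and the remainder is bounded by the uniform $C^{0,1/2}$ bounds. For the high-order $\psi$ equation they are the crux. After integrating by parts against the difference-quotient test function one is left with endpoint expressions in $\psi^\tau$ up to third order, in $|\psi^\tau|^2$ up to fourth order, and in $\phi$ up to second order; these must be dispatched using \emph{all three} essential conditions at each endpoint ($\psi^\tau=\psi_0$, $\psi^\tau{}'=\psi_0'$, $|\psi^\tau|^2{}''=0$) together with the natural conditions from (\ref{E-L4}), and by exploiting that only $\Re\{\phi''\overline{\psi^\tau}\}$, not the full $\phi''$, is constrained. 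As in the proof of Lemma~\ref{Bdpsi'}, near the endpoints the ``radial'' combination $\Re\{\psi^\tau{}'\overline{\psi^\tau}/|\psi^\tau|\}$ and the ``tangential'' combination $\Im\{\psi^\tau{}'\overline{\psi^\tau}/|\psi^\tau|\}$ behave differently --- indeed $|\psi^\tau|^2{}''(L)=|\psi_0|^2{}''(L)=0$ forces $\Re\{w^\tau{}''\overline{\psi_0}\}(L)=0$, so the radial part vanishes to second order at $L$ while the tangential part vanishes only to first order --- and must be separated before estimating. Arranging this split so that the residual endpoint terms are genuinely bounded by $C(q)$ --- via Lemma~\ref{Bdpsi'} and $|\psi^\tau|^2{}''/q^2\le C_1$ --- rather than merely absorbed, while all constants stay independent of $\tau$ (and of $\rho$ at the intermediate stages), is the delicate part of the argument. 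One may alternatively recast this as extending the fields past each endpoint by the corresponding (component-wise) reflection and invoking (\ref{LocEst}), (\ref{HigherLocEst}) about the now-interior endpoint; the work is the same.
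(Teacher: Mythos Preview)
Your proposal and the paper take genuinely different routes, and the difficulty you flag as ``the crux'' is one the paper simply sidesteps rather than confronts.

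The paper does \emph{not} continue the difference-quotient bootstrap up to the boundary. In one space dimension there is no tangential direction, so $\Delta_h$ with $\varphi\equiv 1$ at $x=L$ is not even well defined (it asks for values of $\psi^\tau$, $\n^\tau$ at $x>L$), and the reflection you mention is obstructed by the inhomogeneous, phase-carrying boundary data on $\psi$. Instead the paper exploits the interior regularity already obtained in Steps~I--II to pass to the \emph{strong} (pointwise) form of the Euler--Lagrange system on a collar $(L-\lambda,L')$ with $L'<L$, where $|\psi^\tau|^2>1/2$. Testing (\ref{E-L4}) with $\phi=\zeta\,\psi^\tau/|\psi^\tau|^2$ (compactly supported $\zeta$) yields an explicit expression for $|\psi^\tau|^{2(5)}$; the director equations in strong form give $|\n^\tau{}''|\le C|\psi^\tau{}'''|+\text{lower order}$; substituting back into the strong form of (\ref{E-L4}) and taking an antiderivative produces a Gronwall-type integral inequality $|\psi^\tau{}'''(x)|\le C\int_{L-\lambda}^x|\psi^\tau{}'''|+R+S$. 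This closes with a bound on $\int_{L-\lambda}^{L'}$ that is \emph{independent of} $L'$, and one then lets $L'\uparrow L$. No boundary terms ever appear; no admissibility of high-order test functions at $x=L$ is needed. The extra order for $|\psi^\tau|^2$ (namely $|\psi^\tau|^{2(5)}\in L^2$) drops out of the explicit formula once $\psi^\tau{}'''$ and $\n^\tau{}''$ are controlled.

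So the gap in your plan is not merely that the endpoint terms are ``delicate''; in one dimension the difference-quotient machinery with $\varphi$ non-vanishing at $L$ is ill-posed as written, and the proposed lower-order corrections to force $\phi\in H^2_0$ with $\Re\{\phi''\overline{\psi^\tau}\}(\pm L)=0$ are not specified. The paper's strong-form/Gronwall argument is the substitute for a tangential-derivative estimate and is what you are missing.
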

\vspace{0.1in}

\begin{proof}
Our analysis will deal with the right boundary point $x=L$, the left boundary point is analogous. Since $|\psi^\tau|^2(L,t)=1$ and $\int_{-L}^L(|\psi^\tau(x,t)|^2{}')^2 dx\leq C_1$ uniformly in $t$, we can assume that $|\psi^\tau|^2>1/2$ over $(L-\lambda, L)$ for some $\lambda>0.$ Fix $L-\frac{\lambda}{2}<L'<L$. In the following, we let $R_j$ denote a sum of terms that are integrable over $(L-\lambda,L')$ and whose square integrals are bounded by a constant and small multiples of $\int_{L-\lambda}^{L'}|n_1^\tau{}''|^2\,dx$, $\int_{L-\lambda}^{L'}|n_2^\tau{}''|^2\,dx$, $\int_{L-\lambda}^{L'}|n_3^\tau{}''|^2\,dx$ and $\int_{L-\lambda}^{L'}|\psi^\tau{}'''|^2\,dx$. And let $S_j$ denote a sum of terms such as $|\delta_\tau n_1|$, $|\delta_\tau n_2|$, $|\delta_\tau n_3|$, $|\delta_\tau \psi|$ and their integrals over $(L-\lambda, L).$

We replace the test function in (\ref{E-L4}) by $\phi=\zeta\frac{\psi^\tau}{|\psi^\tau|^2}$ where $\zeta$ is a compactly supported smooth function over $[0,T]\times (L-\lambda,L).$ The higher estimates (\ref{HigherLocEst}), together with integration by parts, allow us to write the Euler-Lagrange equation $(\ref{E-L4})$ in explicit form,
\begin{align}
\label{NormOrd5}
|\psi^\tau|^{2(5)}&=2\Re\left\{\frac{a_{\perp}}{q^2}(1-n_2^{\tau2})\left((1-n_2^{\tau2})\frac{\psi^\tau{}'}{q}\right)''\frac{\overline{\psi^\tau}}{|\psi^\tau|^2}+\frac{a_{\|}}{q^2}n_2^{\tau2}\left(n_2^{\tau2}\frac{\psi^\tau{}'}{q}\right)''\frac{\overline{\psi^\tau}}{|\psi^\tau|^2}\right\}&\\
&\quad+R_1+S_1 \text{ for } L-\delta<y<L.&\nonumber
\end{align}
Now we go back to the first three weak Euler-Lagrange equations, write them in explicit form, and deduce the following estimate:
\begin{equation}
|n_1^\tau{}''|+|n_2^\tau{}''|+|n_3^\tau{}''|\le C|\psi^\tau{}'''|+|R_2|+|S_2|
\label{nEst}
\end{equation}
We also go back to the weak equation (\ref{E-L4}), integrate by parts, and write the explicit equation. We replace $\psi^\tau|\psi^\tau|^{2(6)}$ by $(\psi^\tau|\psi^\tau|^{2(5)})'-\psi^\tau{}'|\psi^\tau|^{2(5)}$, plug equation (\ref{NormOrd5}) into (\ref{E-L4}), and then take anti-derivatives to get the estimate
\begin{equation}
|\psi^\tau{}'''|\le\epsilon|n_1^\tau{}''|+\epsilon|n_2^\tau{}''|+\int_{L-\lambda}^x\epsilon|n_1^\tau{}''|+\int_{L-\lambda}^x\epsilon|n_2^\tau{}''|+C\int_{L-\lambda}^x|\psi^\tau{}'''|+|R_3|+|S_3|.
\end{equation}
Provided $q$ is sufficiently large we have $\epsilon$ small and  can insert inequality (\ref{nEst}) into  the above to get
\begin{equation}
\label{PsiInt}
|\psi^\tau{}'''(x)|\le C\int_{L-\lambda}^x|\psi^\tau{}'''|+|R_4|+|S_4|.
\end{equation}
So $\left[e^{-Cx}\int_{L-\lambda}^x|\psi^\tau{}'''|\right]'\le e^{-Cx}|R_4|+e^{-Cx}|S_4|$. Integrating from $L-\lambda$ to $L'$, we get an estimate on $\int_{L-\lambda}^{L'}|\psi^\tau{}'''|$, which we use to bound the integral on the right-hand side of $(\ref{PsiInt}).$ We square the resulting inequality, as well as $(\ref{nEst})$, and integrate both from $L-\lambda$ to $L'$. The result is the following inequality
\begin{equation}
\int_{L-\lambda}^{L'}|n_1^\tau{}''|^2+|n_2^\tau{}''|^2+|n_3^\tau{}''|^2+ |\psi^\tau{}'''|^2\,dx\le C\int_{L-\lambda}^{L'}|\delta_\tau n_1|^2+|\delta_\tau n_2|^2+|\delta_\tau n_3|^2+|\delta_\tau \psi|^2\,dx+\overline C
\end{equation}
where crucially $\overline C$ is independent  $L'$. Integrating from $0$ to $T$, and since the resulting right-hand side is bounded by (\ref{EnrgyDsp}), we can let $L'\uparrow L$ to conclude that
\begin{equation}
\int_0^T\int_{L-\lambda}^L|n_1^\tau{}''|^2+|n_2^\tau{}''|^2+|n_3^\tau{}''|^2+ |\psi^\tau{}'''|^2\,dxdt\le C.
\end{equation}
Going back to (\ref{NormOrd5}), it is now easy to see that $\int_0^T\int_{L-\lambda}^L||\psi^\tau|^2{}^{(5)}|^2\,dxdt\le C$. Note that we gain one more order of regularity for $|\psi^\tau|^2$ through this method.
\end{proof}

\subsection{Existence} With the higher regularity bound (\ref{BdryEst}) thus obtained, we define the solution set $X_{sol}$ then state and prove the existence theorem. Let
\begin{align*}
X_{sol}=&\left\{(\n(x,t),\psi(x,t)) \in X;\right.&\allowdisplaybreaks\\
& \underset{t\in[0,T]}{\text{ ess sup }}\int_{-L}^L |\n|^2+|\n_x|^2+|\psi|^2+...+|\psi_{xx}|^2+||\psi|^2|^2+...+||\psi|^2_{xxx}|^2\,dx<\infty,&\\
&\int_{\Omega_T} |\n|^2+|\n_x|^2+|\n_{xx}|^2+|\psi|^2+...+|\psi_{xxx}|^2+||\psi|^2|^2+...+||\psi|^2_{xxxxx}|^2\,dxdt<\infty,&\\
&\left.\text{ and }\int_{\Omega_T} |\n_t|^2+|\psi_t|^2\,dxdt<\infty\right\}&
\end{align*}

\begin{theorem}
Given $\mathcal{C}>0$. We can find $q_0(\mathcal{C})$ so that, if $q>q_0$ and $\mathcal{F}_q(\n^0,\psi^0)\le \mathcal{C}$ for some initial data $(\n^0,\psi^0)$ with $\n^0{}''\in L^2(\Omega')$, there exists a solution $(\n(x,t), \psi(x,t))\in X_{sol}$ to the time-dependent Euler-Lagrange equations:
\begin{flalign}
\label{ContEL-n1}
&\int_{\Omega_T} \left\{\partial_tn_1+(1-n^2_1)F_{n_1}-n_1n_2F_{n_2}+(1-n^2_1)'F_{n'_1}-(n_1n_2)'F_{n'_2}-K(n_1'^2+n_2'^2+n_3'^2)n_1\right.&\\
& \qquad\left.-\frac{PE}{\bs} |\psi|^2n_1n_3\right\}u_1+\left\{(1-n^2_1)F_{n'_1}-n_1n_2F_{n'_2}+Kn_1'\right\}u'_1\,dxdt=0&\nonumber
\end{flalign}
\begin{flalign}
\label{ContEL-n2}
&\int_{\Omega_T} \left\{\partial_tn_2-n_1n_2F_{n_1}+(1-n^2_2)F_{n_2}-(n_1n_2)'F_{n'_1}+(1-n^2_2)'F_{n'_2}-K(n_1'^2+n_2'^2+n_3'^2)n_2\right.&\\
&\qquad\left.-\frac{PE}{\bs} |\psi|^2n_2n_3\right\}u_2+\left\{-n_1n_2F_{n'_1}+(1-n^2_2)F_{n'_2}+Kn_2'\right\}u'_2\,dxdt=0&\nonumber
\end{flalign}
\begin{flalign}
\label{ContEL-n3}
&\int_{\Omega_T} \left\{\partial_tn_3-n_1n_3F_{n_1}-n_2n_3F_{n_2}-(n_1n_3)'F_{n'_1}-(n_2n_3)'F_{n'_2}-K(n_1'^2+n_2'^2+n_3'^2)n_3\right.&\\
&\qquad\left.+\frac{PE}{\bs} |\psi|^2(1-n_3^2)\right\}u_3+\left\{-n_1n_3F_{n'_1}-n_2n_3F_{n'_2}+Kn_3'\right\}u'_3\,dxdt=0&\nonumber
\end{flalign}
\begin{flalign}
\label{ContEL-psi}
2\Re\int_{\Omega_T}&\frac{1}{2}\partial_t\psi\overline{\phi}+F_\psi\overline{\phi}+F_{\psi'}\overline{\phi'}+F_{\psi''}\overline{\phi''}+2g(\psi\overline{\phi})(|\psi|^2-1)+2(\psi\overline{\phi})'|\psi|^2{}'+\frac{2}{q^2}(\psi\overline{\phi})''|\psi|^2{}''&\allowdisplaybreaks\\
&\qquad+\frac{2}{q^6}(\psi\overline{\phi})'''|\psi|^2{}'''+\frac{PE}{\bs}n_3\psi\overline{\phi}\,dxdt=0&\nonumber
\end{flalign}
for any $(\mathbf{u},\phi)\in H^{1,1}(\Omega_T;\mathbb{R}^3)\times H^{3,1}(\Omega_T;\mathbb{C})$ such that  $\phi (\cdot,t)\in H^2_0(-L,L)$ with
$\Re\{\phi''\cg\}(-L,t)=\Re\{\phi''\cg\}(L,t)=0$ for almost every $0<t<T$.
\end{theorem}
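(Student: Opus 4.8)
The plan is to pass to the limit $\tau\to 0$ in the discrete (time-stepped) Euler--Lagrange equations (\ref{ELtau}) and (\ref{E-L4}), using the uniform estimates already established. First I would collect the uniform bounds: the energy dissipation inequality (\ref{EnrgyDsp}) gives $\sum_k\tau(\|\delta_\tau\n^k\|^2_{L^2}+\|\delta_\tau\psi^k\|^2_{L^2})\le 2\mathcal{F}(\n^0,\psi^0)\le 2\mathcal{C}$ together with the uniform-in-$\tau$ bound $\mathcal{F}(\n^\tau,\psi^\tau)\le\mathcal{C}$; Lemma~\ref{Bdpsi'} gives $|\psi^\tau{}'|/q\le C_1$; and the global higher-regularity estimate (\ref{BdryEst}) gives a uniform-in-$\tau$ bound on $\n^\tau$ in $L^2(0,T;H^2(\Omega))$, on $\psi^\tau$ in $L^2(0,T;H^3(\Omega))$, and on $|\psi^\tau|^2$ in $L^2(0,T;H^5(\Omega))$. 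Combined with the piecewise-linear interpolants $\tilde\n^\tau,\tilde\psi^\tau$ being bounded in $H^{1,1}$ and $H^{2,1}$ respectively (so their time derivatives are bounded in $L^2(\Omega_T)$), these yield, along a subsequence $\tau\to 0$: weak convergence $\n^\tau\rightharpoonup\n$ in $L^2(0,T;H^2)$, $\psi^\tau\rightharpoonup\psi$ in $L^2(0,T;H^3)$, $\partial_t\tilde\n^\tau\rightharpoonup\partial_t\n$, $\partial_t\tilde\psi^\tau\rightharpoonup\partial_t\psi$ in $L^2(\Omega_T)$, and, by the Aubin--Lions--Simon compactness lemma, strong convergence $\n^\tau\to\n$ in $L^2(0,T;H^1)$ and $\psi^\tau\to\psi$ in $L^2(0,T;H^2)$, hence (by the one-dimensional Sobolev embedding $H^1(\Omega)\hookrightarrow C^{0,1/2}(\overline\Omega)$ applied to $\n^\tau$, $\psi^\tau{}'$, $|\psi^\tau|^2{}''$, $\dots$) uniform-in-$x$ convergence of $\n^\tau$, $\psi^\tau{}'$, $|\psi^\tau|^2{}''$ in $L^2(0,T;C(\overline\Omega))$. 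One also checks $\delta_\tau\n^\tau\rightharpoonup\partial_t\n$ and $\delta_\tau\psi^\tau\rightharpoonup\partial_t\psi$ weakly in $L^2(\Omega_T)$, since $\delta_\tau$ and $\partial_t\tilde{}^{\,\tau}$ differ only by a shift that vanishes in the limit.

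Next I would pass to the limit term by term in (\ref{ELtau}) and the $\rho=0$ equation (\ref{E-L4}). The linear leading terms ($\delta_\tau n_i\to\partial_tn_i$ against the test function $u_i$, $Kn_i^\tau{}'\to Kn_i'$ against $u_i'$, and in (\ref{E-L4}) the terms $F_{\psi^\tau{}''}\overline{\phi''}$ etc.) pass by weak convergence. The problematic terms are the nonlinear products: $(1-(n_1^\tau)^2)F_{n_1^\tau}$, $(n_1^\tau n_2^\tau)'F_{n_2^\tau{}'}$, $K[(n_1^\tau{}')^2+\cdots]n_1^\tau$, $|\psi^\tau|^2n_1^\tau n_3^\tau$, the smectic penalization products $2(\psi^\tau\overline\phi)'|\psi^\tau|^2{}'$, $\frac{2}{q^2}(\psi^\tau\overline\phi)''|\psi^\tau|^2{}''$, $\frac{2}{q^6}(\psi^\tau\overline\phi)'''|\psi^\tau|^2{}'''$, and — most delicately — the $F_\psi,F_{\psi'}$ contributions, which after expanding $F$ involve products of $\psi^\tau,\psi^\tau{}',\psi^\tau{}''$ with the components of $\n^\tau$ and their derivatives. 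Here the strategy is: the factors appearing with the highest derivative (e.g. $\n^\tau{}''$ inside $F_{n_1^\tau{}'}$, $\psi^\tau{}'''$ inside $F_{\psi^\tau{}''}$, $|\psi^\tau|^2{}'''$) converge only weakly in $L^2$, while every other factor multiplying them converges strongly in $L^2$ (indeed uniformly in $x$) by the compactness above; a product weak$\times$strong passes to the limit in $L^1(\Omega_T)$. For the quadratic gradient terms $K[(n_1^\tau{}')^2+\cdots]n_1^\tau u_1$, strong $L^2$ convergence of $\n^\tau{}'$ (from Aubin--Lions) plus $L^\infty$ boundedness of $\n^\tau$ and $u_1\in L^\infty$ — or rather, since $u_1\in H^{1,1}$ only, one uses that $(n_i^\tau{}')^2\to(n_i')^2$ in $L^1$ and $n_1^\tau u_1\to n_1u_1$ in $L^\infty$ after noting $n_1^\tau u_1$ is bounded in $L^2(0,T;H^1)$ — gives convergence; the $|\psi^\tau|^2 n_i^\tau n_j^\tau u$ terms are handled the same way using $\||\psi^\tau|\|_{L^\infty}\le M_1$. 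I would also need to verify the admissible set membership of the limit: the boundary conditions $\psi-\psi_0\in H^2_0$, $|\psi|^2{}''(\pm L)=0$, $|\psi(\pm L)|=1$ all pass to the limit because of the uniform convergence of traces (again via the 1D embedding and the global estimate (\ref{BdryEst})), and $|\n|=1$ a.e. passes from the strong $L^2$ convergence.

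For the passage $\rho\to 0$ referenced in Step~II/Appendix~D: the estimates (\ref{LocEst}), (\ref{HigherLocEst}), (\ref{BdryEst}) are all $\rho$-independent, so the $\rho$-regularized fourth equation has a $\rho\to 0$ limit; the only subtlety is that $\frac{\rho}{q^6}(\psi^\tau{}'''\overline{\phi'''})\to 0$, which follows from $\rho\|\psi^{\tau(4)}\|^2_{L^2(\Omega_T)}\le C(q)$ (hence $\sqrt\rho\,\psi^\tau{}'''$ is bounded in $L^2(0,T;H^1)$, so $\frac{\rho}{q^6}\psi^\tau{}'''\to 0$ strongly) — this is exactly the point of having carried the $\rho$-dependence explicitly through the regularity theorems. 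One does this limit first (at fixed $\tau$, as in the text) to obtain (\ref{E-L4}), then the $\tau\to 0$ limit above. Finally, the solution lies in $X_{sol}$: the integrability conditions defining $X_{sol}$ are precisely the statements of (\ref{BdryEst}), the energy bound (\ref{EnrgyDsp}) (which gives the $\partial_t$ bounds and the ess-sup-in-$t$ bound on the first-order quantities), and Lemma~\ref{Bdpsi'}, all of which survive weak lower semicontinuity.

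\textbf{Main obstacle.} The hard part is \emph{not} the compactness bookkeeping but ensuring that in every nonlinear product exactly one factor carries the "bad" (only-weakly-convergent) highest derivative while all the others are strongly convergent; the smectic elastic density $F$, once $F_\psi,F_{\psi'},F_{\psi''}$ are written out, contains terms like $n_2^\tau n_2^\tau{}'\psi^\tau{}'$ and $(n_2^{\tau2}\psi^\tau{}')'$ where a naive count suggests two derivatives landing on $\n^\tau$ — one must use the global $H^2$-in-$x$ bound on $\n^\tau$ together with the 1D embedding $H^1\hookrightarrow C^{0,1/2}$ to upgrade $\n^\tau{}'$ to strong (even uniform) convergence, so that only $\n^\tau{}''$ (appearing linearly, paired against a test function) remains weak. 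Equivalently: the entire argument hinges on the global regularity estimate (\ref{BdryEst}) giving one more spatial derivative than the bare energy bound, so that all but the top-order term in each equation sits in a compactly-embedded space. Tracking this carefully across all four coupled equations — and checking the boundary traces of $\psi^\tau{}'$ and $|\psi^\tau|^2{}''$ converge so the limit stays in $X$ — is where the real work lies.
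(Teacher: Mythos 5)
Your proposal is correct and follows essentially the same route as the paper: pass to the limit $\tau\to 0$ in the discrete Euler--Lagrange equations (after the $\rho\to 0$ limit at fixed time step), using the energy dissipation inequality (\ref{EnrgyDsp}) and the global regularity estimate (\ref{BdryEst}) so that only the top-order derivatives converge weakly while every other factor in the nonlinear products converges strongly. The one cosmetic difference is that you obtain strong convergence of the intermediate derivatives via Aubin--Lions, whereas the paper derives it from Nirenberg's interpolation inequality applied to differences of approximations (a Cauchy-sequence argument in $L^2(\Omega_T)$) together with interpolation-based $L^2$-boundedness and $L^1$-convergence of the nonlinear terms; both devices serve the same purpose.
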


\begin{proof}
Let $(\mathbf{n}^\tau,\psi^\tau)$ be a discrete gradient flow.
Write equation (\ref{ELtau}) as
\begin{flalign*}
\int_{\Omega_T} \{(1-n^\tau_1{}^2)\delta_\tau n_1-n^\tau_1n^\tau_2\delta_\tau n_2-n^\tau_1n^\tau_3\delta_\tau n_3+U^\tau_1\}u_1+V^\tau_1u'_1\,dxdt=0,&
\end{flalign*}
where $U^\tau_1$ and $V^\tau_1$ are nonlinear functions of $n^\tau_1, n^\tau_2, n^\tau_3, \psi^\tau$ and their derivatives. Knowing that $n^\tau_1\to n_1$ and $\delta_\tau n_1\rightharpoonup\partial_t n_1$  in $L^2(\Omega_T)$, with $\n\n_t=0$ since $|\n|=1$,
\begin{flalign*}
&\lim_{\tau\to 0}\int_{\Omega_T} \{(1-n^\tau_1{}^2)\delta_\tau n_1-n^\tau_1n^\tau_2\delta_\tau n_2-n^\tau_1n^\tau_3\delta_\tau n_3\}u_1\,dxdt&\\
&=\int_{\Omega_T} \{\partial_t n_1-n_1(n_1\partial_t n_1+n_2\partial_t  n_2+n_3\partial_t  n_3)\}u_1\,dxdt=\int_{\Omega_T} \partial_t n_1u_1\,dxdt.
\end{flalign*}
 It remains to prove that $U^\tau_1\rightharpoonup U_1$ and $V^\tau_1\rightharpoonup V_1$ in $L^2(\Omega_T)$. In fact, it suffices to prove that $U^\tau_1$, $V^\tau_1$ are bounded in $L^2(\Omega_T)$ and $U^\tau_1\to U_1$, $V^\tau_1\to V_1$ in $L^1(\Omega_T)$.
  To show how this can be achieved, we consider a typical nonlinear term, $Cn^\tau_2{}'\psi^\tau\psi^\tau{}''$. Before estimating this, note that it follows from (\ref{EnrgyDsp}) and the coercivity of $\mathcal{F}$ that
  $$\underset{t\in[0,T]}{\text{ ess sup }}\int_\Omega |\n^\tau{}'|^2+|\psi^\tau|^2+|\psi^\tau{}''|^2\,dx\leq C$$
  uniformly in $\tau$. Applying Nirenberg's interpolation inequality [24], we can see that
\begin{flalign*}
\int_{\Omega_T} |n^\tau_2{}'\psi^\tau\psi^\tau{}''|^2&\,dxdt\le C\int_0^T||\psi^\tau{}''||_{L^\infty(\Omega)}^2\int_{\Omega} |n^\tau_2{}'|^2\,dxdt\le C\int_0^T ||\psi^\tau{}''||_{L^\infty(\Omega)}^2dt&\\
&\le C\int_0^T \{C||\psi^\tau{}''||_{L^2(\Omega)}||\psi^\tau{}'''||_{L^2(\Omega)}+C||\psi^\tau{}''||_{L^2(\Omega)}^2\}dt&\\
&\le C\left[\int_0^T ||\psi^\tau{}''||^2_{L^2(\Omega)}\,dt\right]^{1/2}\left[\int_0^T||\psi^\tau{}'''||^2_{L^2(\Omega)}\,dt\right]^{1/2}+C||\psi^\tau{}''||_{L^2(\Omega_T)}^2&\allowdisplaybreaks\\
&\le C||\psi^\tau{}''||_{L^2(\Omega_T)}||\psi^\tau{}'''||_{L^2(\Omega_T)}+C||\psi^\tau{}''||_{L^2(\Omega_T)}^2< \infty,&
\end{flalign*}
where the last inequality is true by  (\ref{BdryEst}). This proves $L^2$-boundedness. To prove $L^1$-convergence, we utilize Nirenberg's interpolation inequality and higher regularity again to improve the convergence of subsequences.$$||n_1^{\tau_k}{}'-n_1^{\tau_j}{}'||_{L^2(\Omega_T)}^2\le C||n_1^{\tau_k}{}''-n_1^{\tau_j}{}''||_{L^2(\Omega_T)}||n_1^{\tau_k}-n_1^{\tau_j}||_{L^2(\Omega_T)}+C||n_1^{\tau_k}-n_1^{\tau_j}||_{L^2(\Omega_T)}^2.$$
$\{n_1^\tau\}$ is Cauchy and $\{n_1^\tau{}''\}$ is bounded in $L^2(\Omega_T)$ so $\{n_1^\tau{}'\}$ is Cauchy in $L^2(\Omega_T)$, hence convergent to $n_1'$. Similarly, we obtain $\psi^\tau{}''\to\psi'''$ in $L^2(\Omega_T).$ We have
\begin{flalign*}
\int_{\Omega_T} |n^\tau_2{}'\psi^\tau\psi^\tau{}''-n_2'\cg&\psi''|\,dxdt\le C \left[||\psi^\tau{}''||_{L^2(\Omega_T)}||n^\tau_2{}'-n_2'||_{L^2(\Omega_T)}\right.\\
&\left.+||n_2'\psi^\tau{}''||_{L^2(\Omega_T)}||\psi^\tau-\psi||_{L^2(\Omega_T)}+||n'_2||_{L^2(\Omega_T)}||\psi^\tau{}''-\psi''||_{L^2(\Omega_T)}\right]&
\end{flalign*}
and
\begin{flalign*}
\int_0^T\int_{\Omega}|n_2'\psi^\tau{}''|^2\,dxdt&=\sum_{m=1}^M\int_{(m-1)\tau}^{m\tau}\int_{\Omega'}|n_2'\psi^m{}''|^2\,dxdt\leq \sum_{m=1}^M\tau\sup_{\Omega}|\psi^m{}''|^2\int_{\Omega}|n_2'|^2\,dx&\\
&\leq C\sum_{m=1}^M\tau \sup_{\Omega}|\psi^m{}''|^2\leq C\sum_{m=1}^M\tau\left(\int_{\Omega}|\psi^m{}'''|^2\,dx+\int_{\Omega}|\psi^m{}''|^2\,dx\right)&\\
&= C\int_0^T\int_{\Omega}|\psi^m{}'''|^2\,dxdt+C\int_0^T\int_{\Omega}|\psi^m{}''|^2\,dxdt\leq C.
\end{flalign*}
So
\begin{flalign*}
&\int_{\Omega_T} |n^\tau_2{}'\psi^\tau\psi^\tau{}''-n_2'\cg\psi''|\,dxdt&\\
&\le C \left[||n^\tau_2{}'-n_2'||_{L^2(\Omega_T)}+||\psi^\tau-\psi||_{L^2(\Omega_T)}+||\psi^\tau{}''-\psi''||_{L^2(\Omega_T)}\right]\to 0\quad \text{as $\tau\to 0.$}&
\end{flalign*}
\end{proof}
The higher convergence obtained in the proof of theorem (4.6), along with energy dissipation statement (\ref{EnrgyDsp}), result in the following energy inequality.
\begin{corollary}
 \begin{equation}
\frac{1}{2}\int_{\Omega_s}|\partial_t\n|^2+|\partial_t\psi|^2\,dxdt+\mathcal{F}(\n(s),\psi(s) )\le \mathcal{F}(\n^0,\psi^0)\quad\text{ for }\; 0\leq s\le T.
\label{EnergyIneq}
\end{equation}
\end{corollary}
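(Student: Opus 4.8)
The plan is to obtain (\ref{EnergyIneq}) by letting $\tau\to 0$ in the discrete dissipation inequality (\ref{EnrgyDsp}), along the subsequence for which the discrete gradient flow converges (as in the proof of the existence theorem), and supplying two lower‑semicontinuity estimates. First I would fix $s\in(0,T]$ and, for each $\tau$, take $m=m(\tau)$ to be the smallest integer with $m\tau\ge s$, so that $m\tau\ge s$ and $m\tau\to s$ as $\tau\to0$ (and $m\le M$ since $M\tau>T\ge s$). On $((k-1)\tau,k\tau)$ the interpolants satisfy $\partial_t\tilde\n^\tau=\delta_\tau\n^k$ and $\partial_t\tilde\psi^\tau=\delta_\tau\psi^k$, both constant in $t$, so $\tau\|\delta_\tau\n^k\|_{L^2(\Omega)}^2=\int_{(k-1)\tau}^{k\tau}\|\partial_t\tilde\n^\tau\|_{L^2(\Omega)}^2\,dt$ and likewise for $\psi$; summing over $k$ and using $m\tau\ge s$, (\ref{EnrgyDsp}) gives
\[
\tfrac12\int_0^{s}\!\!\int_\Omega\bigl(|\partial_t\tilde\n^\tau|^2+|\partial_t\tilde\psi^\tau|^2\bigr)\,dx\,dt+\mathcal F\bigl(\n^{m},\psi^{m}\bigr)\le\mathcal F(\n^0,\psi^0),
\]
with $\n^m=\tilde\n^\tau(\cdot,m\tau)$, $\psi^m=\tilde\psi^\tau(\cdot,m\tau)$. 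Since $\partial_t\tilde\n^\tau\rightharpoonup\partial_t\n$ and $\partial_t\tilde\psi^\tau\rightharpoonup\partial_t\psi$ weakly in $L^2(\Omega_T)$, weak lower semicontinuity of the $L^2(\Omega_s)$ norm bounds the first term from below by $\tfrac12\int_{\Omega_s}|\partial_t\n|^2+|\partial_t\psi|^2$.

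The substantive step is $\liminf_{\tau\to0}\mathcal F(\n^m,\psi^m)\ge\mathcal F(\n(\cdot,s),\psi(\cdot,s))$, i.e.\ lower semicontinuity of $\mathcal F$ along a single time slice. By (\ref{EnrgyDsp}) and the coercivity of $\mathcal F$ (together with the pointwise bound on $|\psi|$ and Lemma~\ref{Bdpsi'}), the slices $\n^m$, $\psi^m$, $|\psi^m|^2$ are bounded — for fixed $q$, uniformly in $\tau$ — in $H^1(\Omega)$, $H^2(\Omega)$, $H^3(\Omega)$ respectively; along a subsequence they converge weakly there, hence by the compact embeddings $H^1\hookrightarrow\hookrightarrow C^0(\overline\Omega)$ and $H^2\hookrightarrow\hookrightarrow C^1(\overline\Omega)$ strongly in $C^0$, $C^1$, $C^2$ respectively, with $\n^m{}'$, $\psi^m{}''$ and $|\psi^m|^2{}'''$ converging weakly in $L^2(\Omega)$. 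To identify the weak limits I would note that (\ref{EnrgyDsp}) with coercivity puts $\tilde\n^\tau$ (a convex combination of $\n^{m-1},\n^m$) in a bounded set of $L^\infty(0,T;H^1(\Omega))$ while $\partial_t\tilde\n^\tau$ is bounded in $L^2(\Omega_T)$; the Aubin--Lions--Simon lemma then gives $\tilde\n^\tau\to\n$ in $C([0,T];L^2(\Omega))$, so $\n^m=\tilde\n^\tau(\cdot,m\tau)\to\n(\cdot,s)$ in $L^2(\Omega)$, and likewise $\psi^m\to\psi(\cdot,s)$ and $|\psi^m|^2\to|\psi(\cdot,s)|^2$ in $L^2(\Omega)$; hence the weak limits above are precisely the time‑$s$ slices of $(\n,\psi)$. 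With this, the lower‑semicontinuity argument of Theorem~\ref{MinExistence} applies verbatim at the fixed time $s$: each quadratic group in $\mathcal F$ (taken with $\rho=0$) is a squared $L^2$ norm of a sum of products of a $C^0$‑ or $C^1$‑convergent coefficient with a weakly‑$L^2$‑convergent first derivative of $\n$, second derivative of $\psi$, or derivative of $|\psi|^2$ of order at most three, hence converges weakly in $L^2$ and is weakly lower semicontinuous, while $g(|\psi|^2-1)^2$, $(|\psi|^2{}')^2$ and $\frac{PE}{\bs}|\psi|^2n_3$ pass to the limit by the strong $C^0$/$C^1$ convergence. The regularizing term $\frac{\rho}{q^6}\|\psi'''\|_{L^2}^2$ is nonnegative and causes no difficulty: one works with the limiting ($\rho=0$) energy after the $\rho\to0$ passage recorded in Appendix~D, or else discards this term from the left of (\ref{EnrgyDsp}) before letting $\tau\to0$ and uses $\psi^0\in H^3$ to send $\rho\to0$ on the right.

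Taking $\liminf_{\tau\to0}$ in the displayed inequality and combining the two lower‑semicontinuity estimates yields (\ref{EnergyIneq}) for every $s\in(0,T]$; the case $s=0$ is immediate since $\n(\cdot,0)=\n^0$ and $\psi(\cdot,0)=\psi^0$. The main obstacle is precisely the slice lower semicontinuity of $\mathcal F$: it forces one to combine the time‑level‑uniform a~priori bounds (from (\ref{EnrgyDsp}) together with the regularity estimates (\ref{LocEst}), (\ref{HigherLocEst}), (\ref{BdryEst})) with the compactness in time supplied by Aubin--Lions--Simon, so as to be certain that the weak slice limits are the correct functions $\n(\cdot,s)$, $\psi(\cdot,s)$; by comparison, the bookkeeping with $\rho$ is only a minor technicality.
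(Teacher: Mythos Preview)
Your argument is correct, and the overall strategy---pass to the $\tau\to 0$ limit in the discrete dissipation inequality (\ref{EnrgyDsp})---is the same as the paper's. The paper gives no detailed proof here; it simply says the inequality follows from (\ref{EnrgyDsp}) together with the ``higher convergence'' established inside the proof of the existence theorem, namely the \emph{strong} $L^2(\Omega_T)$ convergence of $\n^\tau{}'$ and $\psi^\tau{}''$ obtained via Nirenberg interpolation against the global bound (\ref{BdryEst}). That route yields actual (not merely $\liminf$) convergence of $\mathcal{F}(\n^\tau,\psi^\tau)$ at almost every time slice, after which one upgrades to every $s$ by continuity in time.

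Your route is genuinely different in the handling of the $\mathcal{F}$ term: you bypass the higher-regularity machinery and argue directly from the uniform-in-time energy bounds, using Aubin--Lions--Simon to identify the slice limits and then the weak lower-semicontinuity argument of Theorem~\ref{MinExistence}. This is more self-contained---in fact your proof does not really need (\ref{LocEst}), (\ref{HigherLocEst}), (\ref{BdryEst}) at all, only (\ref{EnrgyDsp}) and the coercivity of $\mathcal{F}$---and it delivers the inequality for \emph{every} $s$ without a separate a.e.-to-everywhere step. The paper's route, by contrast, is shorter once the strong convergence of derivatives is already in hand from the existence proof. Your remarks on the $\rho$ term are fine and match how the paper organizes the $\rho\to 0$ passage in Appendix~D.
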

\subsection{Uniqueness}
 Now that we established the existence of a continuous gradient flow, we prove uniqueness of the solution independent of the choice of minimizing sequence and the time discretization.
\begin{theorem}
Given $\mathcal{C}>0$. We can find $q_1(\mathcal{C})$ so that, if $q>q_1$ and $\mathcal{F}_q(\n^0,\psi^0)\le \mathcal{C}$ for some initial data $(\n^0,\psi^0)$  then there exists at most one solution $(\n(x,t),\psi(x,t))\in X_{sol}$ to the time-dependent Euler-Lagrange equations, (\ref{ContEL-n1}), (\ref{ContEL-n2}), (\ref{ContEL-n3}), and (\ref{ContEL-psi}), satisfying the energy inequality (\ref{EnergyIneq}).
\label{Uniqueness}
\end{theorem}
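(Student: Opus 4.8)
The plan is to bound the $L^2$--distance between two solutions by a Gronwall argument. Let $(\n^1,\psi^1),(\n^2,\psi^2)\in X_{sol}$ both solve (\ref{ContEL-n1})--(\ref{ContEL-psi}) for the same $q>q_1(\mathcal{C})$ and the same initial data $(\n^0,\psi^0)$ with $\mathcal{F}_q(\n^0,\psi^0)\le\mathcal{C}$, and both satisfy (\ref{EnergyIneq}). Put $\mathbf{w}=\n^1-\n^2$ and $\sigma=\psi^1-\psi^2$; then $\mathbf{w}(\cdot,0)=\sigma(\cdot,0)=0$, and for a.e.\ $t$ one has $\sigma(\cdot,t)=(\psi^1-\psi_0)-(\psi^2-\psi_0)\in H^2_0(-L,L)$. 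Since $\psi^1=\psi^2=\psi_0$ and $\psi^1{}'=\psi^2{}'=\psi_0'$ at $x=\pm L$, subtracting the boundary identities $0=|\psi^i|^2{}''(\pm L,t)=2|\psi^i{}'|^2+2\Re\{\overline{\psi^i}\psi^i{}''\}$ there gives $\Re\{\overline{\psi^i}\,\sigma''\}(\pm L,t)=0$, so $\sigma$ is an admissible test function in (\ref{ContEL-psi}) for \emph{both} solutions. Moreover (\ref{EnergyIneq}), coercivity of $\mathcal{F}$, the one-dimensional Sobolev embedding, and Lemma \ref{Bdpsi'} give, uniformly for the two solutions, $|\n^i|\equiv1$, $\ \n^i{}',\psi^i{}''\in L^\infty(0,T;L^2(-L,L))$, $\ \psi^i,\psi^i{}'\in L^\infty(\Omega_T)$ with $|\psi^i{}'|\le C_1q$, $\ |\psi^i|^2{}'''\in L^\infty(0,T;L^2)$, and $\n^i{}''$, $\psi^i{}'''$ together with the remaining derivatives listed in the definition of $X_{sol}$ in $L^2(\Omega_T)$.

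Set $E(s)=\tfrac12\int_{-L}^L(|\mathbf{w}(x,s)|^2+|\sigma(x,s)|^2)\,dx$. I subtract (\ref{ContEL-n1}) for $\n^2$ from that for $\n^1$ and insert $u_1=w_1$ (first multiplied by a Lipschitz function of $t$ equal to $1$ on $[0,s]$ and vanishing on $[s+\delta,T]$, then letting $\delta\downarrow0$), and likewise (\ref{ContEL-n2}) with $u_2=w_2$, (\ref{ContEL-n3}) with $u_3=w_3$, and (\ref{ContEL-psi}) with $\phi=\sigma$; adding the four identities, the time terms combine to $\int_0^s\!\int_{-L}^L(\partial_t\mathbf{w}\cdot\mathbf{w}+\Re\{\partial_t\sigma\,\overline{\sigma}\})\,dx\,dt=E(s)$. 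The Oseen--Frank fluxes $Kn_i'u_i'$ contribute the nonnegative term $K\int_0^s\|\mathbf{w}'\|_{L^2}^2\,dt$, and the leading part of the elastic density $F$ --- whose $\psi''$-derivative equals $\tfrac{a_{\perp}}{q^3}(1-n_2^2)^2\psi''$ up to lower-order terms --- contributes $\gtrsim\tfrac{a_{\perp}}{q^3}\int_0^s\|(1-n_2^2)\sigma''\|_{L^2}^2\,dt$; call the sum of these good terms $\mathcal{G}(s)\ge0$. Everything else is an integral of a difference $P(z^1)-P(z^2)$ of the polynomial nonlinearities appearing in $F_{n_i},F_{n_i'},F_\psi,F_{\psi'},F_{\psi''}$, in the penalization terms, and in the electrostatic term, where $z^i$ abbreviates $(\n^i,\n^i{}',\psi^i,\psi^i{}',\psi^i{}'',|\psi^i|^2,\dots,|\psi^i|^2{}''')$; the goal is to bound all of it by $\tfrac12\mathcal{G}(s)+\int_0^sC(t)E(t)\,dt$ with $C\in L^1(0,T)$, which gives $\tfrac{d}{dt}E\le C(t)E$ and, since $E(0)=0$, $E\equiv0$ by Gronwall, i.e.\ $\n^1\equiv\n^2$ and $\psi^1\equiv\psi^2$.

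To bound the remainder I telescope, $P(z^1)-P(z^2)=\sum_k Q_k(z^1,z^2)(z^1-z^2)_k$ with each $Q_k$ polynomial, so each term is a product of a factor bounded in $L^\infty$, in $L^\infty(0,T;L^2)$, or in $L^2(\Omega_T)$ (via the bounds above and Lemma \ref{Bdpsi'}) against one of $\mathbf{w},\mathbf{w}',\sigma,\sigma',\sigma''$ or a derivative of $|\sigma|^2$ of order at most $3$; moreover the penalization differences are \emph{quadratically} small in $\sigma$, e.g.
\[(\psi^1\overline{\sigma})'''|\psi^1|^2{}'''-(\psi^2\overline{\sigma})'''|\psi^2|^2{}'''=(|\sigma|^2)'''\,|\psi^1|^2{}'''+(\psi^2\overline{\sigma})'''\bigl(\sigma\overline{\psi^1}+\psi^2\overline{\sigma}\bigr)''',\]
and after integration by parts one shifts the surplus derivative onto the known solution. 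Then Young's inequality, the one-dimensional Gagliardo--Nirenberg inequalities (trading $\|\mathbf{w}'\|_{L^\infty_x}$, $\|\sigma'\|_{L^\infty_x}$, $\|\sigma'\|_{L^2_x}$, $\|\sigma\|_{L^4_x}$ for small multiples of $\|\mathbf{w}'\|_{L^2_x}$, $\|\sigma''\|_{L^2_x}$ plus $\|\mathbf{w}\|_{L^2_x}$, $\|\sigma\|_{L^2_x}$), and the largeness of $q$ --- which makes the negative powers of $q$ attached to the top-order monomials of $F$ and of the penalization density dominate the lone positive ones --- absorb every $\|\mathbf{w}'\|_{L^2}^2$ and $\|\sigma''\|_{L^2}^2$ into $\mathcal{G}(s)$ and bound what is left by $\int_0^sC(t)E(t)\,dt$ with $C\in L^1(0,T)$.

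The main obstacle is exactly this absorption. Because $E$ controls only $\|\mathbf{w}\|_{L^2}$ and $\|\sigma\|_{L^2}$ while the system is fourth order in $\psi$ and, through the $q^{-2}$ and $q^{-6}$ penalizations, formally of still higher order, the argument closes only if no difference term of net differentiation order exceeding that of the good quantities $\|\mathbf{w}'\|_{L^2}$, $\|\sigma''\|_{L^2}$ survives. Securing this requires the combined use of (i) the quadratic-in-$\sigma$ structure of the penalization differences, (ii) the higher regularity of $X_{sol}$ from (\ref{BdryEst}) --- supplemented, where necessary, by an auxiliary interpolation bound controlling $\|\sigma''(t)\|_{L^2}$ in terms of $E(t)$ and the $L^2_t$-bounded norms $\|\psi^i{}'''(t)\|_{L^2_x}$ --- to transfer surplus derivatives to the known solutions, and (iii) exact bookkeeping of the $q$-weights of every monomial of $F$ and of the penalization density; it is here that the hypothesis $q>q_1(\mathcal{C})$ enters, mirroring the role of large $q$ in the regularity Steps I--III, and carrying this out term by term is the bulk of the work.
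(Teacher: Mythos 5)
Your proposal is correct and follows essentially the same route as the paper's proof: difference the Euler--Lagrange equations, test with the difference after localizing in time, telescope the polynomial nonlinearities against the $L^\infty$/$L^2$ bounds furnished by the energy inequality and the global regularity (\ref{BdryEst}), absorb the top-order difference terms into the coercive quantities for $q$ large, and close with Gr\"onwall via an integrable $\eta(t)$. The one small correction is that your good term in $\sigma''$ must combine the $a_{\perp}(1-n_2^2)^2$ and $a_{\|}n_2^4$ coefficients (giving the nondegenerate lower bound $\tfrac{2a_{\perp}a_{\|}}{(a_{\perp}+a_{\|})q^3}\|\sigma''\|_{L^2_x}^2$, exactly as in Appendix B), since $(1-n_2^2)$ alone can vanish where $n_2=\pm1$ and the quantity $\|(1-n_2^2)\sigma''\|_{L^2}^2$ by itself would not control $\|\sigma''\|_{L^2}^2$.
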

\begin{proof}
We consider two solutions $(\n,\psi)$ and $(\tilde{\n},\widetilde{\psi})$ of the weak Euler-Lagrange equations with the same initial data $(\n^0,\psi^0)$ and that satisfy the energy inequality. We take the difference between the corresponding Euler-Lagrange equations and highlight the terms we need.
\begin{align*}
\int_{\Omega_T} &\left\{(\partial_t n_1-\partial_t \tilde{n}_1)+A\right\}u_1(x,t)+\left\{B+Kn_1'-K\tilde{n}_1'\right\}u'_1(x,t)\,dxdt=0.&
\end{align*}
Replace $u_1(x,t)$ by $v_1(x,t)\chi_{(t-\delta,t+\delta)}$ for each $t\in [0,T]$ then let $\delta\to 0$. By the Lebesgue Differentiation Theorem, we get for a.e. $t\in [0,T]$:
\begin{align*}
\int_{\Omega} &\left\{(\partial_t n_1-\partial_t \tilde{n}_1)+A\right\}v_1(x,t)+\left\{B+Kn_1'-K\tilde{n}_1'\right\}v'_1(x,t)\,dx=0.&
\end{align*}
Letting $v_1(x,t)=n_1-\tilde{n}_1$,
\begin{align*}
&\int_{\Omega} \frac{1}{2}\partial_t (n_1-\tilde{n}_1)^2+K(n_1'-\tilde{n}_1')^2=\int_{\Omega}-A(n_1-\tilde{n}_1)-B(n'_1-\tilde{n}'_1)\,dx.&
\end{align*}
We want to estimate the right-hand side, so we consider one of the terms,
 \begin{flalign*}
 &\frac{1}{q^2}\int_{\Omega}|(n'_2\cg\psi''-\tilde{n}'_2\overline{\widetilde{\psi}}\widetilde{\psi}'')(n_1-\tilde{n}_1)|\,dx&\\
 & =\frac{1}{q^2}\int_{\Omega}|n'_2\cg(\psi''-\widetilde{\psi}'')(n_1-\tilde{n}_1)+\widetilde{\psi}''\overline{\widetilde{\psi}}(n'_2-\tilde{n}'_2)(n_1-\tilde{n}_1)-\widetilde{\psi}''n'_2(\cg-\overline{\widetilde{\psi}})(n_1-\tilde{n}_1)|\,dx&\\
 & \le\frac{||n'_2\psi||_\infty}{q^2}\int_{\Omega}|\psi''-\widetilde{\psi}''||n_1-\tilde{n}_1|\,dx+\frac{||\widetilde{\psi}''\widetilde{\psi}||_\infty}{q^2}\int_{\Omega}|n'_2-\tilde{n}'_2||n_1-\tilde{n}_1|\,dx&\\
 &\qquad+\frac{||\widetilde{\psi}''n'_2||_\infty}{q^2}\int_{\Omega}|\psi-\widetilde{\psi}||n_1-\tilde{n}_1|\,dx&\allowdisplaybreaks\\
  & \le\frac{||\psi||^2_\infty}{2q^4}\int_{\Omega}|\psi''-\widetilde{\psi}''|^2\,dx+\frac{||n'_2||^2_\infty}{2q^4}\int_{\Omega}|n_1-\tilde{n}_1|^2\,dx+\frac{||\widetilde{\psi}||^2_\infty}{2q^4}\int_{\Omega}|n'_2-\tilde{n}'_2|^2\,dx&\\
  &+\frac{||\widetilde{\psi}''||^2_\infty}{2q^4}\int_{\Omega}|n_1-\tilde{n}_1|^2\,dx+\frac{||\widetilde{\psi}''||^2_\infty}{2q^4}\int_{\Omega}|\psi-\widetilde{\psi}|^2\,dx+\frac{||n'_2||^2_\infty}{2q^4}\int_{\Omega}|n_1-\tilde{n}_1|^2\,dx.&
 \end{flalign*}
Note that $|\psi|$ and $|\widetilde{\psi}|$ are uniformly bounded independent of $q$, in the first and third terms above. We repeat the process for the remaining equations and add up the final estimates to get, for $q$ sufficiently large,
\begin{align*}
&\frac{1}{2}\partial_t\int_{\Omega}|n_1-\tilde{n}_1|^2+|n_2-\tilde{n}_2|^2+|n_3-\tilde{n}_3|^2+|\psi-\widetilde{\psi}|^2\,dx&\\
&+\epsilon\int_{\Omega}|n_1'-\tilde{n}_1'|^2+|n_2'-\tilde{n}_2'|^2+|n_3'-\tilde{n}_3'|^2+|\psi''-\widetilde{\psi}''|^2\,dx&\\
&\le C\eta(t)\int_{\Omega}|n_1-\tilde{n}_1|^2+|n_2-\tilde{n}_2|^2+|n_3-\tilde{n}_3|^2+|\psi-\widetilde{\psi}|^2\,dx
\end{align*}
where
\begin{flalign*}
\eta(t)&=||n'_1||^2_\infty+||\tilde{n}'_1||^2_\infty+||n'_2||^2_\infty+||\tilde{n}'_2||^2_\infty+||n'_3||^2_\infty+||\tilde{n}'_3||^2_\infty+||\psi''||^2_\infty+||\widetilde{\psi}''||^2_\infty&\\
&+|||\psi|^2{}''||^2_\infty+|||\widetilde{\psi}|^2{}''||^2_\infty+|||\psi|^2{}'||^2_\infty+|||\widetilde{\psi}|^2{}'||^2_\infty+|||\psi|^2{}^{(4)}||^2_\infty+|||\widetilde{\psi}|^2{}^{(4)}||^2_\infty+C_1.
\end{flalign*}
As a consequence of the global estimates (\ref{BdryEst}), $\eta(t)$ is integrable over $[0,T]$. By the differential form of Gr\"{o}nwall's inequality, we conclude that
$$\int_{\Omega}|n_1-\tilde{n}_1|^2+|n_2-\tilde{n}_2|^2+|n_3-\tilde{n}_3|^2+|\psi-\widetilde{\psi}|^2\,dx=0\quad \text{for any $t\in[0,T]$}.$$
Consequently, $n_1-\tilde{n}_1=n_2-\tilde{n}_2=n_3-\tilde{n}_3=\psi-\widetilde{\psi}=0.$
\end{proof}
\section{Conclusions}
Our analysis of the switching dynamics of chiral smectic-C falls within the Landau-de Gennes theory. It also serves as an example of investigating defects using geometric flows, particularly in problems arising in Materials Science. We build our mathematical model upon the free energy introduced by Chen and Lubensky [22]. This energy is closely related to the work of Vaupoti\v{c}, Kralj, \v{C}opi\v{c} and Sluckin in [17], and Shalaginov, Hazelwood and Sluckin in [25]. The main difference is that $|\psi|=1$ is assumed throughout their analyses. While this does not present a major obstacle in the statics, it results in a high energy barrier to overcome in the dynamics. The construction and an analysis of the gradient flow under the assumption $|\psi|=1$ was done by Cheng in [26]. In the present work, we use a full complex-valued order parameter allowing the smectic structure to relax, which leads to a well-posed flow problem for a realistic energy barrier.\\

\noindent {\bf Appendices}\\
\vspace{-0.3in}

\begin{appendix}
\renewcommand{\thesubsection}{\Alph{subsection}}
\renewcommand{\theequation}{\Alph{subsection}.\arabic{equation}}
\subsection{Derivation of the Weak Euler-Lagrange Equations}
\noindent Write the Energy Functional as
\begin{align*}
J(\n,\psi) ={}&J_1(\n,\psi)+J_2(\n)+J_3(\n,\psi)+J_4(\n,\psi)+J_5(\psi)+J_6(\psi''')&\\
={}&\int_{-L}^L E(\n,\psi)\,dx+\int_{-L}^L H(\n)\,dx+\int_{-L}^L I(\n,\psi)\,dx+\int_{-L}^L F(\n,\psi)\,dx&\\
&+\int_{-L}^L G(|\psi|^2)\,dx+\int_{-L}^L \gamma|\psi'''|^2\,dx&
\end{align*}
where
$E=\displaystyle\frac{(n_1-n_1^0)^2}{2\tau}+\frac{(n_2-n_2^0)^2}{2\tau}+\frac{(n_3-n_3^0)^2}{2\tau}+\frac{|\psi-\psi^0|^2}{2\tau},$

\noindent $H=\dfrac{1}{2}K[n_1'^2+n_2'^2+n_3'^2], \quad I=\dfrac{PE}{\bs} |\psi|^2n_3,$
\begin{flalign*}
F&=\frac{a_{\perp}}{q^3}|\psi''-[(\frac{iq}{\bs}n_1\psi+n_2\psi')n_2]'-\frac{q^2}{1+b^2}\psi +\frac{q^2}{1+b^2}n_1^2\psi-\frac{iq}{\bs}n_1n_2\psi'&\\
&\qquad\quad+\frac{c_{\perp}q^2}{2a_{\perp}}\psi|^2&\\
&+\frac{a_{\|}}{q^3}|(\frac{n_1}{\bs}-\co)(-\frac{q^2}{\bs}n_1 \psi +iqn_2\psi'+q^2\co \psi)&\\
&\qquad\quad+[(\frac{iq}{\bs}n_1\psi+n_2\psi'-iq\co \psi)n_2]'|^2&\\
&+\frac{c_{\|}}{q}|\frac{iq}{\bs}n_1 \psi+n_2 \psi'-iq\co \psi|^2,&
\end{flalign*}
and $G(|\psi|^2)=g(|\psi|^2-1)^2+(|\psi|^2{}')^2+\frac{1}{q^2}(|\psi|^2{}'')^2+\frac{1}{q^6}(|\psi|^2{}''')^2.$

We take the first variation of $J(\n,\psi)$ with respect to $\n$ and $\psi$ respectively. Let $(\mathbf{u},\phi)$ be test functions where $ \mathbf{u} \in H^1(-L,L),\;\phi \in H^3(-L,L)\text{ with } \phi \in H^2_0(-L,L)$ with $\Re\{\phi''\cg\}(-L)=\Re\{\phi''\cg\}(L)=0.$

Let $\epsilon_0>0$. For every $\epsilon \in [-\epsilon_0,\epsilon_0]$, define $\n_\epsilon=\frac{\n+\epsilon\mathbf{u}}{|\n+\epsilon\mathbf{u}|}.$ Since $|\n_\epsilon|=1$, $\n_\epsilon$ belongs to the same space as $\n$ . Write
$\n_\epsilon = \n +\epsilon P(\n)\mathbf{u}+o(\epsilon)$ where $P(\n)=\mathbf{I}-\n\otimes\n$ is a projection tensor.
We have:

$$J_1(\n_\epsilon,\psi)=J_1(\n,\psi)+\epsilon\int_{-L}^L \frac{\partial E}{\partial \n}\cdot P(\n)\mathbf{u}+\frac{\partial E}{\partial \nabla\n}\cdot\nabla(P(\n)\mathbf{u})\:\:+o(\epsilon).$$

\begin{align*}
\left.\frac{d}{d\epsilon} J_1(\n_\epsilon,\psi)\right|_{\epsilon=0}={}&\int_{-L}^L \frac{\partial E}{\partial \n}\cdot  P(\n)\mathbf{u}+\frac{\partial E}{\partial \nabla\n}\cdot\nabla(P(\n)\mathbf{u})\,dx=\int_{-L}^L \frac{\partial E}{\partial \n}\cdot P(\n)\mathbf{u}\,dx.
\end{align*}
 The components of $\left.\displaystyle\frac{d}{d\epsilon} J_1(\n_\epsilon,\psi)\right|_{\epsilon=0}$ are:
\begin{align*}
& \int_{-L}^L \left\{(1-n^2_1)\left(\frac{n_1-n_1^0}{\tau}\right)-n_1n_2\left(\frac{n_2-n_2^0}{\tau}\right)-n_1n_3\left(\frac{n_3-n_3^0}{\tau}\right)\right\}u_1\,dx\\
& \int_{-L}^L \left\{-n_1n_2\left(\frac{n_1-n_1^0}{\tau}\right)+(1-n^2_2)\left(\frac{n_2-n_2^0}{\tau}\right)-n_2n_3\left(\frac{n_3-n_3^0}{\tau}\right)\right\}u_2\,dx\\
& \int_{-L}^L \left\{-n_1n_3\left(\frac{n_1-n_1^0}{\tau}\right)-n_2n_3\left(\frac{n_2-n_2^0}{\tau}\right)+(1-n^2_3)\left(\frac{n_3-n_3^0}{\tau}\right)\right\}u_3\,dx.
\end{align*}
Similarly, we have
\begin{align*}
\left.\frac{d}{d\epsilon} J_2(\n_\epsilon)\right|_{\epsilon=0}={}&\int_{-L}^L \frac{\partial  H}{\partial \nabla\n}\cdot\nabla(P(\n)\mathbf{u})\,dx=K\int_{-L}^L \nabla\n.\nabla(P(\n)\mathbf{u})\,dx&\\
={}&K\int_{-L}^L \nabla\n\cdot\nabla \mathbf{u}-|\nabla \n|^2 \n\cdot\mathbf{u}\,dx, \text{ since } |\n|=1.
\end{align*}
\noindent The components of $\left.\displaystyle\frac{d}{d\epsilon} J_2(\n_\epsilon)\right|_{\epsilon=0}$ are:
\begin{align*}
& K\int_{-L}^L -(n_1'^2+n_2'^2+n_3'^2)n_1u_1+n_1'u_1'\,dx\\
& K\int_{-L}^L -(n_1'^2+n_2'^2+n_3'^2)n_2u_2+n_2'u_2'\,dx\\
& K\int_{-L}^L -(n_1'^2+n_2'^2+n_3'^2)n_3u_3+n_3'u_3'\,dx.
\end{align*}
$\displaystyle\left.\frac{d}{d\epsilon} J_3(\n_\epsilon,\psi)\right|_{\epsilon=0}=\int_{-L}^L \frac{\partial I}{\partial \n}\cdot P(\n)\mathbf{u}\,dx$, with components
\begin{align*}
& \int_{-L}^L -\frac{PE}{\bs} |\psi|^2n_1n_3u_1\,dx\allowdisplaybreaks\\
& \int_{-L}^L -\frac{PE}{\bs} |\psi|^2n_2n_3u_2\,dx\\
& \int_{-L}^L \frac{PE}{\bs} |\psi|^2(1-n_3^2)u_3\,dx.
\end{align*}
$\displaystyle\left.\frac{d}{d\epsilon} J_4(\n_\epsilon,\psi)\right|_{\epsilon=0}=\int_{-L}^L \frac{\partial F}{\partial \n}\cdot P(\n)\mathbf{u}+\frac{\partial F}{\partial \nabla\n}\cdot\nabla(P(\n)\mathbf{u})\,dx$,  with components
\begin{align*}
&\int_{-L}^L \left((1-n^2_1)F_{n_1}-n_1n_2F_{n_2}+(1-n^2_1)'F_{n'_1}-(n_1n_2)'F_{n'_2}\right)u_1+\left((1-n^2_1)F_{n'_1}\right.&\\
&\qquad\left.-n_1n_2F_{n'_2}\right)u'_1\,dx&\\
&\int_{-L}^L \left(-n_1n_2F_{n_1}+(1-n^2_2)F_{n_2}-(n_1n_2)'F_{n'_1}+(1-n^2_2)'F_{n'_2}\right)u_2+\left(-n_1n_2F_{n'_1}\right.&\\
&\qquad\left.+(1-n^2_2)F_{n'_2}\right)u'_2\,dx&\\
&\int_{-L}^L \left(-n_1n_3F_{n_1}-n_2n_3F_{n_2}-(n_1n_3)'F_{n'_1}-(n_2n_3)'F_{n'_2}\right)u_3+\left(-n_1n_3F_{n'_1}-n_2n_3F_{n'_2}\right)u'_3\,dx.
\end{align*}
Note that we use $F_{n_1}$ to denote the derivative of $F$ with respect to $n_1$, and we calculate it using the fact that $\partial_x(|z|^2)=2 \Re(\overline{z}_xz).$ For instance,
\begin{align*}
&&F_{n_1}=2\Re&\left\{ \frac{a_\perp}{q}\left(\frac{i}{\bs}n'_2\cg+\frac{2i}{\bs}n_2\overline{\psi'}+\frac{2q}{1+b^2}n_1\cg\right)\right.\\
&&&\qquad\left(\frac{\psi''}{q}-\frac{i}{\bs}n_1'n_2\psi-\frac{i}{\bs}n_1n'_2\psi-\frac{2i}{\bs}n_1n_2\psi'-2n_2'n_2\frac{\psi'}{q}\right.\\
&&&\qquad\qquad\left.-n^2_2\frac{\psi''}{q}-\frac{q}{1+b^2}\psi +\frac{q}{1+b^2}n_1^2\psi+\frac{c_{\perp}q}{2a_{\perp}}\psi\right)\displaybreak[3]\\
&&&+\frac{a_{\|}}{q}\left(\frac{-2q}{1+b^2}n_1\cg-\frac{2i}{\bs}n_2\overline{\psi'}+\frac{2q}{\bs}\co \cg-\frac{i}{\bs}n'_2\cg\right)\\
&&& \qquad\left(\frac{-q}{1+b^2}n_1^2\psi+\frac{2i}{\bs}n_1n_2\psi'+\frac{2q}{\bs}\co n_1\psi-2i\co n_2\psi'\right.\\
&&&\qquad\qquad-q\cos^2\theta\psi+\frac{i}{\bs}n'_1n_2\psi+\frac{i}{\bs}n_1n'_2\psi+2n_2'n_2\frac{\psi'}{q}\\
&&&\qquad\qquad\left.+n_2^2\frac{\psi''}{q}-i\co n_2'\psi\right)\\
&&&\left.+qc_{\|}\left(\frac{-i}{\bs}\cg\right)\left(\frac{i}{\bs}n_1\psi+n_2\frac{\psi'}{q}-i\co \psi\right)\right\}
\end{align*}

Adding the components of the above variations, we get the first 3 Euler-Lagrange Equations in weak form, (\ref{sE-L1}), (\ref{sE-L2}) and (\ref{sE-L3}).

For the fourth Euler-Lagrange equation, we note that the energy has the general form $\displaystyle\int a_1 |g_1|^2+a_2 |g_2|^2+...+a_k|g_k|^2\,dx$ where $g_1$, $g_2$, ... and $g_k$ are linear in $\psi,\psi',\psi''$. We'll take for example $f(\psi)=\displaystyle\int a_1 |b_1\psi|^2+a_2 |b_2\psi|^2\,dx$. Then
$f(\psi+\epsilon\phi)= \displaystyle\int a_1 [b_1(\psi+\epsilon\phi)][\overline{b_1}(\cg+\epsilon\overline{\phi})]+a_2 [b_2(\psi+\epsilon\phi)][\overline{b_2}(\cg+\epsilon\overline{\phi})]\,dx,$ where $\phi$ is a test function.
\begin{align*}
\left.\frac{d}{d\epsilon}f(\psi+\epsilon\phi)\right|_{\epsilon=0}=&\int a_1 b_1 \phi \overline{b_1} \cg+a_1 b_1 \psi \overline{b_1} \overline{\phi}+a_2 b_2 \phi \overline{b_2} \cg+a_2 b_2 \psi \overline{b_2} \overline{\phi}\,dx\\
=& 2\Re\int (a_1 \overline{g_\psi}g +a_2 \overline{h_\psi}h) \overline{\phi}\,dx.
\end{align*}
Proceeding in a similar way as in deriving the first three equations, we add the different variations to get
\begin{align*}
2\Re \int_{-L}^L&\frac{1}{2}\delta_\tau \psi\overline{\phi}+F_\psi\overline{\phi}+F_{\psi'}\overline{\phi'}+F_{\psi''}\overline{\phi''}+2g(\psi\overline{\phi})(|\psi|^2-1)+2(\psi\overline{\phi})'|\psi|^2{}'&\\
&+\frac{2}{q^2}(\psi\overline{\phi})''|\psi|^2{}''+\frac{2}{q^6}(\psi\overline{\phi})'''|\psi|^2{}'''+\frac{\rho}{q^6}(\psi'''\overline{\phi'''})+\frac{PE}{\bs}n_3(\psi\overline{\phi})\,dx=0,&\nonumber
\end{align*}
Note that $F_\psi,F_{\psi'},F_{\psi''},F_{\psi'''}$ are not derivatives but notation for expressions that are sums of terms like $\overline{g_\psi} g,\overline{g_{\psi'}} g,\overline{g_{\psi''}} g, \overline{g_{\psi'''}} g$ respectively.
\subsection{Proof of $L^2$-Local Estimates (\ref{LocEst})}

We replace the test function in (\ref{ELtau}) by $\Delta_{-h}[(\Delta_hn_1)\varphi^2]$ for a small $h>0$ with $\varphi$ as in (\ref{cut-off}). We drop the superscript $\tau$ for convenience throughout this proof.
\begin{flalign*}
&\int_{\Omega_T} \left\{(1-n^2_1)\delta _\tau n_1-n_1n_2\delta _\tau n_2-n_1n_3\delta _\tau n_3+F_{n_1}-n^2_1F_{n_1}-n_1n_2F_{n_2}-2n'_1n_1F_{n'_1}\right.&\\
&{}\:\left.-n'_1n_2F_{n'_2}-n_1n'_2F_{n'_2}-Kn_1'^2n_1-Kn_2'^2n_1-Kn_3'^2n_1-\frac{PE}{\bs} |\psi|^2n_1n_3\right\}\Delta_{-h}[(\Delta_hn_1)\varphi^2]&\\
&{}\:+\left(F_{n'_1}-n^2_1F_{n'_1}-n_1n_2F_{n'_2}\right)\Delta_{-h}[(\Delta_hn'_1)\varphi^2+2(\Delta_hn_1)\varphi'\varphi]\,dxdt=0.&
\end{flalign*}
 Using the fact that $\int f\Delta_{-h}g\,dx=-\int\Delta_hfg\,dx$ and rearranging the terms,
\begin{align}
\label{eq-n}
K\int_{\Omega_T}(\Delta_hn_1'\varphi)^2&=\int_{\Omega_T}[(1-n^2_1)\delta _\tau n_1-n_1n_2\delta _\tau n_2-n_1n_3\delta _\tau n_3]\Delta_{-h}[\Delta_h n_1 \varphi^2]&\\
&\qquad-\Delta_h[F_{n_1}-n^2_1F_{n_1}-n_1n_2F_{n_2}-2n'_1n_1F_{n'_1}-n'_1n_2F_{n'_2}-n_1n'_2F_{n'_2}&\nonumber\allowdisplaybreaks\\
&\qquad\qquad\:\:-Kn_1'^2n_1-Kn_2'^2n_1-Kn_3'^2n_1-\frac{PE}{\bs} |\psi|^2n_1n_3]\Delta_h n_1 \varphi^2&\nonumber\\
&\qquad-\Delta_h[F_{n'_1}-n^2_1F_{n'_1}-n_1n_2F_{n'_2}][(\Delta_hn'_1)\varphi^2+2(\Delta_hn_1)\varphi'\varphi]\,dxdt.\nonumber
\end{align}
\noindent We want to estimate the right-hand side by a constant or a small multiple of the left-hand side. We highlight a few terms, the remaining terms are approximated in a similar fashion. Recall that $|\n|=1$ and $|\psi|\le C_1$, where $C_1$ is independent of $q$.
\begin{flalign*}
*\int_{\Omega_T} |(1-n^2_1)\delta_\tau n_1\Delta_{-h}\Delta_h n_1 \varphi^2|\,dxdt&\le \epsilon\int_{\Omega_T}|\Delta_{-h}\Delta_h n_1 \varphi|^2\,dxdt+\frac{1}{\epsilon}\int_{\Omega_T}|n_1\delta_\tau n_1\varphi|^2\,dxdt&\\
&\le \epsilon\int_{\Omega_T}|\Delta_h n_1' \varphi|^2\,dxdt+\frac{1}{\epsilon}\sum_{m=1}^M\tau ||\delta_\tau n_1||_{L^2(\Omega)}&\\
&\le \epsilon\int_{\Omega_T}|\Delta_h n_1' \varphi|^2\,dxdt+C,
\end{flalign*}
\noindent where we have used Young's inequality for conjugate H{\"o}lder exponents and the energy dissipation property (\ref{EnrgyDsp}).
\begin{flalign*}
*\int_{\Omega_T} |n_{1h}'n_1\Delta_h n_1' &\Delta_h n_1\varphi^2|\,dxdt\le \epsilon\int_{\Omega_T}|\Delta_h n_1' \varphi|^2\,dxdt+\frac{K^2}{\epsilon}\int_{\Omega_T}|n_{1h}'\Delta_h n_1\varphi|^2\,dxdt&\\
&= \epsilon\int_{\Omega_T}|\Delta_h n_1' \varphi|^2\,dxdt+\frac{K^2}{\epsilon}\int_0^T\int_{-L+\eta/2}^{L-\eta/2}|n_{1h}'\Delta_h n_1\varphi|^2\,dxdt&\\
&\le \epsilon\int_{\Omega_T}|\Delta_h n_1' \varphi|^2\,dxdt+\frac{K^2}{\epsilon}\int_0^T\sup_{(-L+\eta/2, L-\eta/2)}|\Delta_h n_1\varphi|^2\int_{-L+\eta/2}^{L-\eta-2}|n_{1h}'|^2\,dxdt&\\
&\le \epsilon\int_{\Omega_T}|\Delta_h n_1' \varphi|^2\,dxdt+\frac{K^2}{\epsilon}\int_0^TC\sup_{(-L+\eta/2, L-\eta/2)}|\Delta_h n_1\varphi|^2dt&\\
&\le \epsilon\int_{\Omega_T}|\Delta_h n_1'\varphi|^2\,dxdt+C
\end{flalign*}
\noindent where we have used the fact that for $f\in H^1(\Omega)$, $\displaystyle\sup_\Omega|f\varphi|^2\le \epsilon \int_\Omega|f'\varphi|^2\,dx+C$.

\noindent We estimate $\Delta_hF_{n_1'}$, $|\Delta_hF_{n_1'}|\le \frac{C_1}{q}\left(\frac{|\Delta_h\psi''|}{q}+...\right).$
\begin{flalign*}
* \frac{C_1}{q}\int_{\Omega_T}\left |\frac{\Delta_h\psi''}{q}\Delta_h n_1'\varphi^2\right|\,dxdt&= C_1\int_{\Omega_T}\left|\frac{\Delta_h \psi''}{q^\frac{7}{4}}\frac{\Delta_hn_1'}{q^\frac{1}{4}}\varphi\right|\,dxdt&\\
&\le \frac{C_1}{2}\int_{\Omega_T}\left|\frac{\Delta_h \psi''}{q^\frac{7}{4}}\varphi\right|^2\,dxdt+\frac{C_1}{2}\int_{\Omega_T}\left|\frac{\Delta_h n_1'}{q^\frac{1}{4}}\varphi\right|^2\,dxdt&\allowdisplaybreaks\\
&\le \frac{C_1}{q^\frac{1}{2}}\frac{1}{q^3}\int_{\Omega_T}|\Delta_h \psi''\varphi|^2\,dxdt+\frac{1}{q^\frac{1}{2}}\int_{\Omega_T}|\Delta_h n_1'\varphi|^2\,dxdt+C.
\end{flalign*}
Going back to equation $(\ref{eq-n})$, we estimate all the right-hand side terms and deduce
\begin{align}
\label{est-n}
K\int_{\Omega_T}|\Delta_hn'_1\varphi|^2&\,dxdt\le \epsilon\int_{\Omega_T}|\Delta_hn'_1\varphi|^2\,dxdt+\epsilon\int_{\Omega_T}|\Delta_hn'_2\varphi|^2\,dxdt+\epsilon\int_{\Omega_T}|\Delta_hn'_3\varphi|^2\,dxdt\\
&+\frac{\epsilon}{q^3}\int_{\Omega_T}|\Delta_h\psi''\varphi|^2\,dxdt+\frac{C_1}{q^{\frac{1}{2}}}\int_{\Omega_T}|\Delta_hn'_1\varphi|^2\,dxdt+\frac{C_1}{q}\int_{\Omega_T}|\Delta_hn'_1\varphi|^2\,dxdt\nonumber\\
&+\frac{C_1}{q}\int_{\Omega_T}|\Delta_hn'_2\varphi|^2\,dxdt+\frac{C_1}{q^{\frac{1}{2}}}\frac{1}{q^3}\int_{\Omega_T}|\Delta_h\psi''\varphi|^2\,dxdt+C,\nonumber
\end{align}
where $C_1$ is independent of $q$ and $\eta$.

For the fourth Euler-Lagrange equation, we use the test function $\phi=\Delta_{-h}[\Delta_h \psi \varphi^2].$ with $\varphi$ being the same cut-off function.
\begin{flalign*}
2\Re\int_{\Omega_T}&\delta_\tau\psi\Delta_{-h}[\Delta_h \cg \varphi^2]-\Delta_hF_\psi[\Delta_h \cg \varphi^2]-\Delta_hF_{\psi'}[\Delta_h \cg \varphi^2]'-\Delta_hF_{\psi''}[\Delta_h \cg \varphi^2]''&\\
&+2g(\psi\Delta_{-h}[\Delta_h \cg \varphi^2])(|\psi|^2-1)+2(\psi\Delta_{-h}[\Delta_h \cg \varphi^2])'|\psi|^2{}'+\frac{2}{q^2}(\psi\Delta_{-h}[\Delta_h \cg \varphi^2])''|\psi|^2{}''&\\
&+\frac{2}{q^6}(\psi\Delta_{-h}[\Delta_h \cg \varphi^2])'''|\psi|^2{}'''+\frac{\rho}{q^6}\psi'''\Delta_{-h}[\Delta_h \cg \varphi^2]'''+\frac{PE}{\bs}n_3\psi\Delta_{-h}[\Delta_h \cg \varphi^2]\,dxdt=0
\end{flalign*}
knowing that $\displaystyle\Delta_hF_{\psi''}=\left[\frac{a_{\perp}}{q^2}(1-n_2^2)^2 \frac{\Delta_h\psi''}{q}+\frac{a_{\|}}{q^2}(n_2^4)\frac{\Delta_h\psi''}{q}\right]+$ remaining terms.

\noindent We write $2\Re\{\psi\Delta_{-h}[\Delta_h\cg\varphi^2]\}=\Delta_{-h}[\Delta_h|\psi|^2\varphi^2]-\Delta_{-h}[\Delta_h\cg h\Delta_h\psi\varphi^2]-2\Delta_{-h}\cg\Delta_{-h}\psi\varphi_{-h}^2$ and isolate the terms we want to estimate.
\begin{flalign}
\label{eq-psi}
 2\int_{\Omega_T}&\left[\frac{a_{\perp}}{q^2}(1-n_2^2)^2 \frac{|\Delta_h\psi''|^2}{q}+\frac{a_{\|}}{q^2}(n_2^4)\frac{|\Delta_h\psi''|^2}{q}\right]\varphi^2+\frac{1}{q^6}|\Delta_h|\psi|^2{}'''\varphi|^2+\frac{\rho}{q^6}|\Delta_h\psi'''\varphi|^2\,dxdt&\\
=2\Re&\int_{\Omega_T}\delta_\tau\psi\Delta_{-h}[\Delta_h \cg \varphi^2]-\Delta_hF_\psi[\Delta_h \cg \varphi^2]-\Delta_hF_{\psi'}[\Delta_h \cg \varphi^2]'+\left\{-\Delta_hF_{\psi''}[\Delta_h \cg \varphi^2]''\right.\nonumber&\\
&+\left[\frac{2a_{\perp}}{q^2}(1-n_2^2)^2 \frac{|\Delta_h\psi''|^2}{q}+\frac{2a_{\|}}{q^2}(n_2^4)\frac{|\Delta_h\psi''|^2}{q}\right]\varphi^2\left.\right\}+2g(\psi\Delta_{-h}[\Delta_h \cg \varphi^2])(|\psi|^2-1)&\nonumber\\
&+2(\psi\Delta_{-h}[\Delta_h \cg \varphi^2])'|\psi|^2{}'+\frac{2}{q^6}(\psi\Delta_{-h}[\Delta_h \cg \varphi^2])''|\psi|^2{}''+\left\{\right.\frac{2}{q^6}(\psi\Delta_{-h}[\Delta_h \cg \varphi^2])'''|\psi|^2{}'''&\nonumber\\
&+\frac{1}{q^6}|\Delta_h|\psi|^2{}'''\varphi|^2\left.\right\}+\left\{\right.\frac{\rho}{q^6}\psi'''\Delta_{-h}[\Delta_h \cg \varphi^2]'''+\frac{\rho}{q^6}|\Delta_h\psi'''\varphi|^2\left.\right\}+\frac{PE}{\bs}n_3\psi\Delta_{-h}[\Delta_h \cg \varphi^2]\,dxdt.\nonumber
\end{flalign}

\noindent As before, we want to estimate the right-hand side by a constant or a small multiple of the left-hand side. We highlight only a few terms. Recall that $\dfrac{|\psi'|}{q}\le C_1$.

\noindent One term we consider,
\begin{flalign*}
\int_{\Omega_T}2\Re\{-\Delta_hF_\psi\Delta_h\cg\varphi^2\}\,dxdt&\le 2\int_0^T \sup\limits_{(-L,L)}|\Delta_h\psi\varphi|\int_{-L}^L|\Delta_hF_\psi\varphi|\,dxdt&\\
&\le C_1\int_0^T \sup\limits_{(-L,L)}|\psi'|\int_{-L}^L|\Delta_hF_\psi\varphi|\,dxdt&\\
&\le C_1q\int_{\Omega_T}|\Delta_hF_\psi\varphi|\,dxdt,
\end{flalign*}
where $|\displaystyle\Delta_hF_\psi|\le \frac{C_1}{q}(|n_1'|+...)(|\frac{\Delta_h\psi''}{q}|+...)$.
\begin{flalign*}
* \,C_1\int_{\Omega_T}|\frac{\Delta_h\psi''}{q}n_1'\varphi|\,dxdt& \le \frac{\epsilon}{q}\int_{\Omega_T}|\frac{\Delta_h\psi''\varphi}{q}|^2\,dxdt+\frac{C_1^2q}{\epsilon}\int_{\Omega_T}|n_1'|^2\,dxdt&\\
&\le \frac{\epsilon}{q^3}\int_{\Omega_T}|\Delta_h\psi''\varphi|^2\,dxdt+C.&
\end{flalign*}
Another term we consider,
\begin{flalign*}
&\int_{\Omega_T}2\Re\left\{-\Delta_hF_{\psi''}[\Delta_h \cg \varphi^2]''+\left[\frac{2a_{\perp}}{q^2}(1-n_2^2)^2 \frac{|\Delta_h\psi''|^2}{q}+\frac{2a_{\|}}{q^2}(n_2^4)\frac{|\Delta_h\psi''|^2}{q}\right]\varphi^2\right\}\,dxdt,&
\end{flalign*}
where $\left|-\Delta_hF_{\psi''}+\left[\frac{a_{\perp}}{q^2}(1-n_2^2)^2 \frac{\Delta_h\psi''}{q}+\frac{a_{\|}}{q^2}(n_2^4)\frac{\Delta_h\psi''}{q}\right]\right|\le\frac{C_1}{q^2}(|n_{2h}\psi_h\Delta_hn'_1|+$ remaining terms.
\begin{flalign*}
*\frac{C_1}{q^2}\int_{\Omega_T}|n_{2h}\psi_h\Delta_hn'_1\Delta_h\overline{\psi''}\varphi^2|\,dxdt&\le \frac{C_1}{q^2}\int_{\Omega_T}|\Delta_hn'_1\Delta_h\psi''\varphi^2|\,dxdt&\\
&\le C_1q^{\frac{1}{2}}\int_{\Omega_T}|\frac{\Delta_hn'_1}{q^{\frac{1}{2}}}\varphi|^2\,dxdt+C_1\frac{1}{q^{\frac{1}{2}}}\int_{\Omega_T}|\frac{\Delta_h\overline{\psi''}}{q^{\frac{3}{2}}}\varphi|^2\,dxdt&\\
&=                                                                                                                                                                                                                                                                                 \frac{C_1}{q^{\frac{1}{2}}}\int_{\Omega_T}|\Delta_hn'_1\varphi|^2\,dxdt+\frac{C_1}{q^{\frac{1}{2}}} \frac{1}{q^3}\int_{\Omega_T}|\Delta_h\psi''\varphi|^2\,dxdt.&
\end{flalign*}
A third term we consider,
\begin{flalign*}
&\frac{2}{q^6}\int_{\Omega_T}2\Re\left\{(\psi\Delta_{-h}[\Delta_h\cg\varphi^2])'''|\psi|^2{}'''\right\}\,dxdt\\
&=\frac{2}{q^6}\int_{\Omega_T}(\Delta_{-h}[\Delta_h|\psi|^2\varphi^2])'''|\psi|^2{}'''\,dxdt-\frac{2}{q^6}\int_{\Omega_T}(\Delta_{-h}[\Delta_h\psi h\Delta_h\cg\varphi^2])'''|\psi|^2{}'''\,dxdt&\allowdisplaybreaks\\
&\quad-\frac{4}{q^6}\int_{\Omega_T}(\Delta_{-h}\psi\Delta_{-h}\cg\varphi_{-h}^2)'''|\psi|^2{}'''\,dxdt&\\
&=-\frac{2}{q^6}\int_{\Omega_T}(\Delta_h|\psi|^2\varphi^2)'''\Delta_h|\psi|^2{}'''\,dxdt+\frac{2}{q^6}\int_{\Omega_T}(\Delta_h\psi h\Delta_h\cg\varphi^2)'''\Delta_h|\psi|^2{}'''\,dxdt&\\
&\quad-\frac{4}{q^6}\int_{\Omega_T}(\Delta_{-h}\psi\Delta_{-h}\cg\varphi_{-h}^2)'''|\psi|^2{}'''\,dxdt&
\end{flalign*}
\begin{flalign*}
*&\frac{2h}{q^6}\int_{\Omega_T}|\Delta_h\psi|^2{}'''\varphi^2\Delta_h|\psi|^2{}'''\,dxdt&\\
&=\frac{2h}{q^6}\int_{\Omega_T}2\Re\{\Delta_h\psi'''\Delta_h\cg\}\varphi^2\Delta_h|\psi|^2{}'''\,dxdt+ \frac{2h}{q^6}\int_{\Omega_T}2\Re\{3\Delta_h\psi''\Delta_h\overline{\psi'}\}\varphi^2\Delta_h|\psi|^2{}'''\,dxdt&\\
&\le \frac{\epsilon_1}{q^6}\int_{\Omega_T}(\Delta_h|\psi|^2{}'''\varphi)^2\,dxdt+\frac{4^2h^2}{\epsilon_1q^4}\int_{\Omega_T}|\Delta_h\psi'''\frac{\Delta_h\cg}{q}\varphi|^2\,dxdt&\\
&\quad+ \frac{\epsilon_1}{q^6}\int_{\Omega_T}(\Delta_h|\psi|^2{}'''\varphi)^2\,dxdt+\frac{12^2h^2}{\epsilon_1q^6}\int_{\Omega_T}|\Delta_h\psi''\Delta_h\overline{\psi'}\varphi|^2\,dxdt&\\
&\le\frac{\epsilon_1}{q^6}\int_{\Omega_T}(\Delta_h|\psi|^2{}'''\varphi)^2\,dxdt+\frac{4^2C_1h^2}{\epsilon_1q^4} \int_{\Omega_T}|\Delta_h\psi'''\varphi|^2\,dxdt&\\
&\quad+\frac{12^2}{\epsilon_1q^6}\int_0^T\sup_{(-L,L)}|\Delta_h\psi'\varphi|^2\int_{-L}^L|\psi''(x+h)-\psi''(x)|^2\,dxdt\qquad\left(\text{ choose } h^2<\frac{\epsilon\epsilon_1\rho}{4^2C_1q^2}\right)&\\
&\le\frac{\epsilon}{q^6}\int_{\Omega_T}(\Delta_h|\psi|^2{}'''\varphi)^2\,dxdt+\frac{\rho}{q^6} \int_{\Omega_T}|\Delta_h\psi'''\varphi|^2\,dxdt+\frac{\epsilon}{q^3}\int_{\Omega_T}|\Delta_h\psi''\varphi|^2\,dxdt+C.&
\end{flalign*}
Note that the constant on the right-hand side is dependent on $\rho$.

Going back to equation $(\ref{eq-psi})$, we estimate all the terms and deduce
\begin{flalign*}
 & 2\int_{\Omega_T}\frac{a_{\perp}}{q^2}(1-n_2^2)^2 \frac{|\Delta_h\psi''|^2}{q}+\frac{a_{\|}}{q^2}(n_2^4)\frac{|\Delta_h\psi''|^2}{q}\varphi^2+\frac{1}{q^6}|\Delta_h|\psi|^2{}'''\varphi|^2+\frac{\rho}{q^6}|\Delta_h\psi'''\varphi|^2\,dxdt&\\
 &\le \epsilon\int_{\Omega_T}|\Delta_h\n'\varphi|^2\,dxdt+\frac{\epsilon}{q^3}\int_{\Omega_T}|\Delta_h\psi''\varphi|^2\,dxdt+\frac{C_1}{q^{\frac{1}{2}}}\int_{\Omega_T}|\Delta_h\n'\varphi|^2\,dxdt&\\
 &+\frac{C_1}{q^{\frac{1}{2}}}\frac{1}{q^3}\int_{\Omega_T}|\Delta_h\psi''\varphi|^2\,dxdt+\frac{\epsilon}{q^6}\int_{\Omega_T}(\Delta_h|\psi|^2{}'''\varphi)^2\,dxdt+\frac{\epsilon\rho}{q^6}\int_{\Omega_T}|\Delta_h\psi'''\varphi|^2\,dxdt+C(\rho).&
\end{flalign*}
However,
\begin{flalign*}
\frac{2a_{\perp} a_{\|}}{a_{\perp}+a_{\|}}\frac{1}{q^3}\int_{\Omega_T}|\Delta_h\psi''\varphi|^2\,dxdt\le&2\int_{\Omega_T}[a_{\perp}(1-n_2^2)^2+a_{\|}(n_2^4)]\frac{1}{q^3}|\Delta_h\psi''\varphi|^2\,dxdt&\\
=&2\int_{\Omega_T}\Re\left\{\left[\frac{a_{\perp}}{q^2}(1-n_2^2)^2 \frac{\Delta_h\psi''}{q}+\frac{a_{\|}}{q^2}(n_2^4)\frac{\Delta_h\psi''}{q}\right]\Delta_h\overline{\psi''}\varphi^2\right\}\,dxdt.&
\end{flalign*}
Therefore,
\begin{align}
\label{est-psi}
 &\frac{2a_{\perp} a_{\|}}{a_{\perp}+a_{\|}}\frac{1}{q^3}\int_{\Omega_T}|\Delta_h\psi''\varphi|^2\,dxdt+\frac{2}{q^6}\int_{\Omega_T}(\Delta_h|\psi|^2{}'''\varphi)^2+\frac{2\rho}{q^6}\int_{\Omega_T}|\Delta_h\psi'''\varphi|^2\,dxdt&\allowdisplaybreaks\\
 &\le \epsilon\int_{\Omega_T}|\Delta_h\n'\varphi|^2\,dxdt+\frac{\epsilon}{q^3}\int_{\Omega_T}|\Delta_h\psi''\varphi|^2\,dxdt+\frac{C_1}{q^{\frac{1}{2}}}\int_{\Omega_T}|\Delta_h\n'\varphi|^2\,dxdt\nonumber&\\
 &+\frac{C_1}{q^{\frac{1}{2}}}\frac{1}{q^3}\int_{\Omega_T}|\Delta_h\psi''\varphi|^2\,dxdt+\frac{\epsilon}{q^6}\int_{\Omega_T}(\Delta_h|\psi|^2{}'''\varphi)^2\,dxdt+\frac{\epsilon\rho}{q^6}\int_{\Omega_T}|\Delta_h\psi'''\varphi|^2\,dxdt+C(\rho) \nonumber
\end{align}
We add the two estimates $(\ref{est-n})$ and $(\ref{est-psi})$, along with the corresponding ones for $n_2$ and $n_3$,
\begin{flalign*}
&\left(K-\epsilon-\frac{C_1}{q}-\frac{C_1}{q^{\frac{1}{2}}}\right)\int_{\Omega_T}|\Delta_h\n'\varphi|^2\,dxdt+\left(\frac{2a_{\perp} a_{\|}}{a_{\perp}+a_{\|}}-\epsilon-\frac{C_1}{q^{\frac{1}{2}}}\right)\frac{1}{q^3}\int_{\Omega_T}|\Delta_h\psi''\varphi|^2\,dxdt&\allowdisplaybreaks\\
&+\left(2-\epsilon\right)\frac{1}{q^6}\int_{\Omega_T}(\Delta_h|\psi|^2{}'''\varphi)^2\,dxdt+\left(2-\epsilon\right)\frac{\rho}{q^6}\int_{\Omega_T}|\Delta_h\psi'''\varphi|^2\,dxdt\le C(\rho).&
\end{flalign*}
Choosing $\epsilon$ sufficiently small, and for $q$ sufficiently large, and since $\varphi=1$ on $(-L+\eta,L-\eta)$,
\begin{equation*}
\int_0^T\int_{-L+\eta}^{L-\eta}|\Delta_h\n'|^2+\frac{1}{q^3}|\Delta_h\psi''|^2+\frac{1}{q^6}|\Delta_h|\psi|^2{}'''|^2+\frac{\rho}{q^6}|\Delta_h\psi'''|^2\,dxdt\le C(\rho).
\end{equation*}
Letting $h\to 0$, we deduce that $n''_1,\, n''_2,\, n''_3,\, \psi''',\,|\psi|^2{}^{(4)},\,\psi^{(4)} \in L^2((-L+\eta,L-\eta)\times[0,T]).$ To remove the dependence on $\rho$, so we repeat all the estimates replacing difference quotients by derivatives. For instance,
\begin{flalign*}
&-\frac{2}{q^6}\int_{\Omega_T}[\psi(\overline{\psi'}\varphi^2)'+\overline{\psi}(\psi'\varphi^2)']''|\psi|^2{}^{(4)}\,dxdt&\\
&=-\frac{2}{q^6}\int_{\Omega_T}[(\psi\overline{\psi''}+\overline{\psi}\psi'')\varphi^2]''|\psi|^2{}^{(4)}\,dxdt-\frac{2}{q^6}\int_{\Omega_T}[(\psi\overline{\psi'}+\overline{\psi}\psi')2\varphi'\varphi]''|\psi|^2{}^{(4)}\,dxdt&\\
&=-\frac{2}{q^6}\int_{\Omega_T}[(|\psi|^2{}''-2\overline{\psi'}\psi')\varphi^2]''|\psi|^2{}^{(4)}\,dxdt-\frac{2}{q^6}\int_{\Omega_T}[(|\psi|^2{}')2\varphi'\varphi]''|\psi|^2{}^{(4)}\,dxdt.&
\end{flalign*}
\begin{flalign*}
*\frac{4}{q^6}\int_{\Omega_T}|\psi'''\overline{\psi'}\varphi^2|\psi|^2{}^{(4)}|\,dxdt&\le\frac{\epsilon}{q^6}\int_{\Omega_T}(|\psi|^2{}^{(4)}\varphi)^2\,dxdt+\frac{4^2}{\epsilon q^4}\int_{\Omega_T}|\psi'''\frac{\overline{\psi'}}{q}\varphi|^2\,dxdt&\\
&\le\frac{\epsilon}{q^6}\int_{\Omega_T}(|\psi|^2{}^{(4)}\varphi)^2\,dxdt+\frac{C_1}{q}\frac{1}{q^3}\int_{\Omega_T}|\psi'''\varphi|^2\,dxdt
\end{flalign*}
We proceed as before and arrive to
\begin{flalign*}
&\left(K-\epsilon-\frac{C_1}{q}-\frac{C_1}{q^{\frac{1}{2}}}\right)\int_{\Omega_T}|\n''\varphi|^2\,dxdt+\left(\frac{2a_{\perp} a_{\|}}{a_{\perp}+a_{\|}}-\epsilon-\frac{C_1}{q^{\frac{1}{2}}}-\frac{C_1}{q}\right)\frac{1}{q^3}\int_{\Omega_T}|\psi'''\varphi|^2\,dxdt&\\
&+\left(2-\epsilon\right)\frac{1}{q^6}\int_{\Omega_T}(|\psi|^{(4)}\varphi)^2\,dxdt+\left(2-\epsilon\right)\frac{\rho}{q^6}\int_{\Omega_T}|\psi^{(4)}\varphi|^2\,dxdt\le C,&
\end{flalign*}
which leads to  (\ref{LocEst}).
\subsection{Time-Quotient Reasoning in Higher Estimates}
To prove higher local regularity, we repeat a similar process as in Appendix B. Here we just highlight where the extra condition, that $\displaystyle\int_{\Omega'}|\n^0{}''|^2\,dx$ is initially bounded, arises from. For that we consider only one piece of the Euler-Lagrange equation, after replacing the test function by $\mathbf{u}=\Delta_{-h}[\Delta_h\n'\varphi^2]'$.
\begin{flalign*}
 &\int_{\Omega_T}[\delta_\tau\n-(\delta_\tau\n\cdot\n)\n]\Delta_{-h}[\Delta_h\n'\varphi^2]'\,dxdt=-\int_{\Omega_T}\Delta_h[\delta_\tau\n-(\delta_\tau\n\cdot\n)\n][\Delta_h\n'\varphi^2]'\,dxdt&\\
&\qquad=\int_{\Omega_T}\Delta_h[\delta_\tau\n'][\Delta_h\n'\varphi^2]\,dxdt+\int_{\Omega_T}\Delta_h[(\delta_\tau\n\cdot\n)\n][\Delta_h\n'\varphi^2]'\,dxdt:=\rm I+\rm II.
\end{flalign*}
Using the fact that
\begin{flalign*}
(\delta_\tau\omega\cdot\omega)_m&=\frac{1}{\tau}(\omega^m-\omega^{m-1})\cdot\frac{1}{2}[(\omega^m-\omega^{m-1})+(\omega^m+\omega^{m-1})]&\allowdisplaybreaks\\
&=\frac{1}{2\tau}|\omega^m-\omega^{m-1}|^2+\frac{1}{2\tau}(|\omega^m|^2-|\omega^{m-1}|^2),
\end{flalign*}
we write
\begin{flalign*}
\rm I&=\int_{\Omega_T}\delta_\tau(\Delta_h\n')\Delta_h\n'\varphi^2\,dxdt&\\
&=\int_{\Omega_T}\frac{\tau}{2}|\delta_\tau\Delta_h\n'|^2\varphi^2\,dxdt+\int_{\Omega_T}\frac{1}{2\tau}|\Delta_h\n^M{}'|^2\varphi^2\,dxdt-\int_{\Omega_T}\frac{1}{2\tau}|\Delta_h\n^0{}'|^2\varphi^2\,dxdt&\\
&=\int_{\Omega_T}\frac{\tau}{2}|\delta_\tau\Delta_h\n'|^2\varphi^2\,dxdt+\frac{1}{2}\int_\Omega|\Delta_h\n^M{}'|^2\varphi^2\,dx-\frac{1}{2}\int_\Omega|\Delta_h\n^0{}'|^2\varphi^2\,dx.
\end{flalign*}
Using the fact that
\begin{flalign*}
(\delta_\tau\n\cdot\n)_m&=\frac{1}{\tau}(\n^m-\n^{m-1})\cdot\frac{1}{2}[(\n^m-\n^{m-1})+(\n^m+\n^{m-1})]&\\
&=\frac{1}{2\tau}|\n^m-\n^{m-1}|^2+\frac{1}{2\tau}(|\n^m|^2-|\n^{m-1}|^2)\qquad\text{ with }|\n^m|=|\n^{m-1}|=1&\allowdisplaybreaks\\
&=\frac{1}{2\tau}|\n^m-\n^{m-1}|^2,
\end{flalign*}
we write
\begin{flalign*}
\rm{ II}&=\int_{\Omega_T}\Delta_h[(\delta_\tau\n\cdot\n)\n][\Delta_h\n'\varphi^2]'\,dxdt&\\
&=\int_{\Omega_T}\Delta_h(\delta_\tau\n\cdot\n)\n_h[\Delta_h\n'\varphi^2]'\,dxdt+\int_{\Omega_T}(\delta_\tau\n\cdot\n)\Delta_h\n[\Delta_h\n'\varphi^2]'\,dxdt&\\
&=\int_{\Omega_T}\Delta_h\left(\frac{\tau}{2}|\delta_\tau\n|^2\right)\n_h[\Delta_h\n'\varphi^2]'\,dxdt+\int_{\Omega_T}(\delta_\tau\n\cdot\n)\Delta_h\n[\Delta_h\n'\varphi^2]'\,dxdt.
\end{flalign*}
We can prove that
\begin{flalign*}
|{\rm II}|\le \epsilon\int_{\Omega_T}|\Delta_h\n''\varphi|^2\,dxdt+\epsilon\int_{\Omega_T}\tau|\Delta_h\delta_\tau\n'\varphi|^2\,dxdt+\epsilon \sup_{(0,T)}\int_\Omega|\Delta_h\n'\varphi|^2\,dx+C,
\end{flalign*}
to get the following estimate
\begin{flalign}
\label{Ineq}
&\left|{\rm I}+{\rm II}-\int_{\Omega_T}\frac{\tau}{2}|\delta_\tau\Delta_h\n'\varphi|^2\,dxdt-\frac{1}{2}\int_\Omega|\Delta_h\n^M{}'\varphi|^2\,dx\right|&
\\
&\le \frac{1}{2}\int_\Omega|\Delta_h\n^0{}'\varphi|^2\,dx+\epsilon\int_{\Omega_T}|\Delta_h\n''\varphi|^2\,dxdt+\epsilon\int_{\Omega_T}\tau|\Delta_h\delta_\tau\n'\varphi|^2\,dxdt&\nonumber\\
&\quad+\epsilon \sup_{(0,T)}\int_\Omega|\Delta_h\n'\varphi|^2\,dx+C.\nonumber
\end{flalign}
As can be seen from (\ref{Ineq}), in order to get an estimate using the Euler-Lagrange equations with the terms $\int_{\Omega_T}\frac{\tau}{2}|\delta_\tau\Delta_h\n'\varphi|^2\,dxdt$ and $\frac{1}{2}\int_\Omega|\Delta_h\n^M{}'\varphi|^2\,dx$, among others, on the left-hand side, we need $\frac{1}{2}\int_\Omega|\Delta_h\n^0{}'\varphi|^2\,dx$ to be initially bounded.
\subsection{Recovering Euler-Lagrange equations when $\rho\to 0$}
Recall the energy functional
$$\qquad J^\rho(\n,\psi)=\displaystyle\int_{-L}^L \left\{\frac{|\n-\n^{k-1}|^2}{2\tau}+\frac{|\psi-\psi^{k-1}|^2}{2\tau}\right\}\,dx+\mathcal{F}_q(\n,\psi;\rho)$$
\noindent where we consider $(\n,\psi)=(\n^{k,\rho},\psi^{k,\rho})$ that minimizes $J^\rho$. We want to let $\rho\to 0$, assuming that the time step $k$ is fixed. From the interior estimates (\ref{LocEst}) we have that $\n^{k,\rho}\to \n^{k,0}$ in $H^1(-L',L';\mathbb{S}^2)$ and $\psi^{k,\rho}\to\psi^{k,0}$ in $H^2(-L',L')$ for each $L'<L$. We want to prove that $\n^{k,\rho}\to \n^{k,0}$ in $H^1(-L,L;\mathbb{S}^2)$ and $\psi^{k,\rho}\to\psi^{k,0}$ in $H^2(-L,L)$.

From the lower semi-continuity of the integrals with respect to these sequences we have
$$J^0(\n^{k,0},\psi^{k,0})\leq\underset{\overline{\rho\to 0}}\lim J^\rho(\n^{k,\rho},\psi^{k,\rho}).$$

We first show that this is in fact an equality by constructing test functions $(\n_\varepsilon,\psi_\varepsilon)$. Fix $r>0$ so that $2>|\psi^{k,0}|>1/2$ on $(-L,-L+r)\cup(L-r,L)$. Set
\begin{eqnarray*}
&&\n_\varepsilon=\n^{k,0} \text{ on } (-L,L)\\
&&\psi_\varepsilon =\psi^{k,0} \text{ on } (-L+r,L-r).
\end{eqnarray*}
On $(-L,-L+r)\cup(L-r,L)$ we write $\psi^{k,0}(x)=|\psi^{k,0}|e^{i\Theta(x)}$. Since $\Theta(x)\in H^3(L-r,L')\cap H^2(-L+r,L)$, we can find $\Theta_\varepsilon\in H^3(L-r,L)$ so that $\Theta_\varepsilon(L-r)=\Theta(L-r), \Theta_\varepsilon'(L-r)=\Theta'(L-r), \Theta_\varepsilon''(L-r)=\Theta''(L-r), \Theta_\varepsilon(L)=\Theta(L), \Theta_\varepsilon'(L)=\Theta'(L)$ and so that $\Theta_\varepsilon(x)\to\Theta(x)$ in $H^2(L-r,L)$. We carry out the corresponding construction on $(-L,-L+r)$. We set $\psi_\varepsilon(x)=|\psi^{k,0}(x)|e^{i\Theta_\varepsilon(x)}$ on $(-L,-L+r)\cup(L-r,L)$.

We find that $\psi_\varepsilon(x)\in H^3(-L,L)$, $\psi_\varepsilon\to\psi^{k,0}$ in $H^2(-L,L)$ as $\varepsilon\to 0$ with  $\psi_\varepsilon$ having the correct boundary conditions at $x_2=\pm L$. This renders $(\n_\varepsilon,\psi_\varepsilon)$ a comparison function for each $\varepsilon,\rho>0$ for $J^\rho$.

We now choose, for each $\varepsilon>0$, a $\rho(\varepsilon)>0$ and small so that
$$\rho(\varepsilon)\int_{-L}^{L}|\psi_\varepsilon'''|^2\,dx<\varepsilon.$$
Then we have
$$J^0(\n^{k,0},\psi^{k,0})\leq\underset{\overline{\rho\to 0}}\lim J^\rho(\n^{k,\rho},\psi^{k,\rho})\leq\underset{\overline{\varepsilon\to 0}}\lim J^{\rho(\varepsilon)}(\n_\varepsilon,\psi_\varepsilon)=J^0(\n^{k,0},\psi^{k,0}).$$
Thus
$$J^0(\n^{k,0},\psi^{k,0})=\underset{\overline{\rho\to 0}}\lim J^\rho(\n^{k,\rho},\psi^{k,\rho}).$$
It follows that each of the integrals making up $J^\rho$ converge to their counterpart  in $J^0$. Therefore $\int_{-L}^{L}|(\n^{k,\rho})'|^2\,dx\to\int_{-L}^{L}|(\n^{k,0})'|^2\,dx$ which implies that $\n^{k,\rho}\to\n^{k,0}$ in $H^1(-L,L)$.

Expanding the first two terms in $\mathcal{F}_q$ out we get
\begin{flalign*}
&\left\{\frac{a_\perp}{q^3}\left(1-({n_2^{k,0})}^2\right)^2+\frac{a_\|}{q^3}(n_2^{k,0})^4\right\}|(\psi^{k,\rho})''|^2&\\
&+
\left\{\frac{a_\perp}{q^3}\left[\left(1-({n_2^{k,\rho})}^2\right)^2-\left(1-({n_2^{k,0})}^2\right)^2\right]+\frac{a_\|}{q^3}\left[({n_2^{k,\rho})}^4-(n_2^{k,0})^4\right]\right\}|(\psi^{k,\rho})''|^2+\cdots&\\
&=\left\{\frac{a_\perp}{q^3}\left(1-({n_2^{k,0})}^2\right)^2+\frac{a_\|}{q^3}(n_2^{k,0})^4\right\}|(\psi^{k,\rho})''|^2+{\rm I}(\rho)+{\rm II}(\rho)
\end{flalign*}
where $\int_{-L}^L {\rm I}(\rho)\,dx\to 0$ and $\int_{-L}^L {\rm II}(\rho)\,dx\to \int_{-L}^L {\rm II}(0)\,dx$ as $\rho\to 0$. Here we are using the properties that ${\rm II}(\rho)$ is at most linear in $(\psi^{k,\rho})''$ and $\n^{k,\rho}\to\n^{k,0}$ in $H^1(-L,L)$. Since the integral of this expression converges to the corresponding integral in $J^0$ we get
\begin{flalign*}
\int_{-L}^L&\left\{\frac{a_\perp}{q^3}\left(1-({n_2^{k,0})}^2\right)^2+\frac{a_\|}{q^3}(n_2^{k,0})^4\right\}|(\psi^{k,\rho})''|^2\,dx&\\
&\longrightarrow \int_{-L}^L\left\{\frac{a_\perp}{q^3}\left(1-({n_2^{k,0})}^2\right)^2+\frac{a_\|}{q^3}(n_2^{k,0})^4\right\}|(\psi^{k,0})''|^2\,dx.
\end{flalign*}
\bigskip
Due to this, $\psi^{k,\rho}\to\psi^{k,0}$ in $H^2(-L,L)$.

Finally using the strong convergence for $\psi^{k,\rho}$ and $\n^{k,\rho}$ we can show that one recovers the Euler-Lagrange equations  (\ref{sE-L1}), (\ref{sE-L2}), (\ref{sE-L3}) with $(u_1(x,k\tau),u_2(x,k\tau),$

\noindent $u_3(x,k\tau))\in H^1(-L,L)$ upon letting $\rho\to 0$.
\end{appendix}

\bibliographystyle{amsplain}

\end{document}